\documentclass[11pt,reqno]{amsart}

\usepackage{amsmath,amsopn,amssymb,amsthm,multicol}

\usepackage{booktabs}
\usepackage{tabularx}
\usepackage{hyperref}
\usepackage{array}
\usepackage{graphicx}
\newcommand{\fr}{\mathfrak}

\newcommand{\op}{\operatorname}

 \newtheorem{lemma} {Lemma} [section]
\newtheorem{theorem}[lemma]{Theorem} 
\newtheorem{remark}[lemma] {Remark} 
\newtheorem{prop} [lemma]{Proposition}  
 
\newtheorem{corol}[lemma] {Corollary}

\newtheorem{claim}[lemma]{Claim}

\newtheorem{assum}[lemma]{Assumption}

\numberwithin{equation}{section}

\makeatletter
\def\ps@sourisheadings{%
  \def\@oddfoot{}%
  \def\@evenfoot{}%
  \def\@evenhead{%
    \normalfont\small
    \rlap{\thepage}\hfil
    NIKOLAOS PANAGIOTIS SOURIS\hfil}%
  \def\@oddhead{%
    \normalfont\small\hfil
    UNIFORM DOUBLING ON COMPACT HOMOGENEOUS SPACES
    \hfil\llap{\thepage}}%
}
\makeatother

\begin{document}

\title{Uniform doubling on compact homogeneous spaces with cyclic Lie brackets}
\author{Nikolaos Panagiotis Souris}
\address{University of Patras, Department of Mathematics, University Campus, 26504, Rio Patras, Greece}
\email{nsouris@upatras.gr}

\medskip

\begin{abstract}
We establish the uniform doubling property for $G$-invariant metrics on a wide class of compact Riemannian homogeneous spaces $(G/K,g)$, describing explicitly the volume growth of the metric balls $B_g(p,r)$. This property was previously known only for specific cases, including abelian Lie groups, the Lie group $SU(2)$ and quotients of $SU(2)\times \mathbb R^n$. Building on the approach of Eldredge, Gordina and Saloff-Coste for $SU(2)$, we refine and develop geometric and Lie-theoretic tools that allow us to replace the explicit identities of the Milnor basis in $SU(2)$ by a general structural assumption on the metric eigenspace decomposition. In particular, we show that the uniform doubling of $(G/K,g)$ is a consequence of the cyclic bracket condition $[\fr{m}_i,\fr{m}_j]=\fr{m}_k$, for $i,j,k$ pairwise distinct, a property that naturally generalizes the Milnor structure of $SU(2)$. We apply our results to $\mathbb Z_2\times \mathbb Z_2$-symmetric spaces, establishing the uniform doubling property for the complete family of $G$-invariant metrics on several classes of generalized Wallach spaces.  For compact homogeneous spaces, we also derive a global Poincar\'{e} inequality which holds uniformly for the spaces $(G/K,g)$ under consideration, with a constant controlled by the volume doubling constant of the space.

\medskip
\noindent  {\it Mathematics Subject Classification 2020.} Primary 53C30; Secondary 53C21, 53C17, 53C23. 

\medskip
\noindent {\it Keywords}: Volume doubling; uniform doubling; homogeneous Riemannian manifold; $\mathbb Z_2\times \mathbb Z_2$-symmetric space; generalized Wallach space; Poincar\'{e} inequality.

\end{abstract}

\maketitle

\pagestyle{sourisheadings}

\section{Introduction}
\subsection{Uniform doubling on Riemannian manifolds} An interesting question in Riemannian and metric geometry is whether the volume growth of a space can be controlled universally, no matter how much we deform its geometry in arbitrary directions. More precisely, a Riemannian manifold $(M,g)$ is called \emph{volume doubling} if 

\[ D_g:=\sup_{p\in M, \ r>0}\frac{\mu_g(B_g(p,2r))}{\mu_g(B_g(p,r))}<\infty, \]

\noindent where $\mu_g$ denotes the corresponding Riemannian volume measure and $B_g(p,r)$ denotes the metric ball with respect to the Riemannian distance $d_g$, centered at $p\in M$ with radius $r$. Accordingly, $D_g$ is called the \emph{volume doubling constant} of $(M,g)$. The crucial question is whether this property holds uniformly for entire families $\mathcal{M}$ of Riemannian metrics on $M$, i.e., whether there exists a constant $D$ such that $D_g\leq D$ for all $g\in \mathcal{M}$. If such a constant exists, we say that $M$ is \emph{uniformly doubling} on the family of metrics $\mathcal{M}$.

\subsection{Uniform doubling on Lie groups and the Milnor basis} In \cite{EGS18}, Eldredge, Gordina and Saloff-Coste showed that the Lie group $SU(2)$ is uniformly doubling on the complete family of its left-invariant metrics.

\begin{theorem}\label{EGSTheorem}\emph{(\cite{EGS18})} There exists a constant $D$ such that, for any left-invariant Riemannian metric $g$ on $SU(2)$, it holds $D_g\leq D$. \end{theorem}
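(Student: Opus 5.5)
The plan follows the strategy of Eldredge, Gordina and Saloff-Coste. First I reduce to a normal form. Any left-invariant metric $g$ on $SU(2)$ is diagonalized by a Milnor basis $X_1,X_2,X_3$ of $\fr{su}(2)$. This basis is orthonormal for the bi-invariant metric, satisfies $[X_i,X_j]=2X_k$ for cyclic $(i,j,k)$, and in it $g=\sum a_i^{-2}(X_i^*)^2$ with $a_1\ge a_2\ge a_3>0$. Such a basis exists because $\mathrm{Ad}(SU(2))=SO(3)$ acts transitively on orthonormal frames, so diagonalizing $g$ relative to the Killing form yields a Milnor frame. Rescaling the metric does not change $D_g$, so I may normalize one parameter.

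By left-invariance it suffices to treat balls centred at the identity $e$. The goal is a two-sided estimate, with constants independent of $(a_1,a_2,a_3)$, of the form
\[
\mu_g(B_g(e,r))\asymp \frac{V(r)}{a_1a_2a_3},
\]
where $V(r)$ is the bi-invariant Haar volume of an explicit \lq\lq box''
\[
Q(r)=\{\exp(t_1X_1)\exp(t_2X_2)\exp(t_3X_3): |t_i|\le \phi_i(r)\}.
\]
The piecewise-defined side lengths are
\[
\phi_3(r)=\min(a_3r,\pi),\qquad \phi_2(r)=\min(a_2r,\pi),\qquad \phi_1(r)=\min\bigl(\max(a_1r,\,a_2a_3r^2),\,\pi\bigr).
\]
The term $a_2a_3r^2$ records that the commutator of the two cheapest directions, $X_2$ and $X_3$, generates $X_1$ (assuming $a_1$ is the smallest, hence most costly, direction after relabelling appropriately). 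Once this estimate holds, doubling follows immediately: each $\phi_i$ satisfies $\phi_i(2r)\le 4\phi_i(r)$, and the Haar density in these coordinates is comparable to a product of factors of the form $\min(\phi_i,1)$-type. This gives $D_g\le C$ with $C$ absolute.

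The estimate has two halves.

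For the upper bound, $B_g(e,r)\subset Q(Cr)$, I take a horizontal-type path of $g$-length less than $r$ and write its velocity as $\sum u_iX_i$, so that $|u_i|\le a_i\cdot(\text{speed})$. I then integrate the group ODE in the exponential coordinates of the second kind. The $X_3$ and $X_2$ coordinates grow at most like $a_3r$ and $a_2r$. The $X_1$ coordinate grows by $a_1r$ plus a commutator contribution of size about $a_2a_3r^2$, which I expect from a Baker--Campbell--Hausdorff or Gronwall estimate valid while the coordinates stay small. Once a coordinate reaches order one, the bound $\pi$ takes over, by compactness.

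For the lower bound, $Q(cr)\subset B_g(e,r)$, I build explicit paths. Products $\exp(t_iX_i)$ have length $|t_i|/a_i$. The bracket term is reached with a commutator loop $\exp(sX_2)\exp(sX_3)\exp(-sX_2)\exp(-sX_3)\approx\exp(-2s^2X_1)$ of length about $s/a_2+s/a_3$.

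I expect the main obstacle to be uniformity in the degenerate regimes. These are the cases where some $a_ir$ is large while others are tiny, so that the commutator approximation leaves the small-coordinate region and the torus $\exp(\mathbb R X_i)$ wraps around. There I would split into finitely many cases according to the ordering of $a_3r\lesssim a_2r\lesssim a_1r$ relative to $1$. In each case I would use the explicit Haar measure in Euler-type coordinates together with the fact that the closed one-parameter subgroups have period $\pi$. This lets me check that the box volume and the ball volume stay comparable with constants independent of $(a_1,a_2,a_3)$.
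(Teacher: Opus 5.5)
\textbf{There is a genuine gap.} Your model box is wrong once the cheapest direction has saturated, so the two-sided estimate $\mu_g(B_g(e,r))\asymp V(r)/(a_1a_2a_3)$, on which the whole argument rests, is false in that regime.

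First, a bookkeeping point. With $g=\sum a_i^{-2}(X_i^*)^2$, a large $a_i$ means a cheap direction. Under the ordering $a_1\ge a_2\ge a_3$ that you fix, $X_1$ is the cheapest direction, so the bracket term sits on the wrong coordinate. I read your formulas with $a_1\le a_2\le a_3$, as your parenthetical intends. With that reading your box reproduces the paper's $\widetilde{Vol}_{\alpha_I}$ for $r\le 1/a_3$ and for $r\ge 1/a_2$, i.e.\ on the Euclidean and sub-Riemannian intervals and after full saturation.

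Now take $a_1=a_2=\epsilon$, $a_3=1/\epsilon$, $r=1$. This is a Berger sphere whose circle fibres $\exp(\mathbb R X_3)$ have length of order $\epsilon$ over a base of radius of order $1/\epsilon$.
\begin{itemize}
\item Your sides are $\phi_3=\pi$, $\phi_2=\epsilon$ and $\phi_1=\min(\max(\epsilon,1),\pi)=1$. So $V(1)\asymp\epsilon$, and you predict $\mu_g(B_g(e,1))\asymp 1$.
\item On $\mathrm{span}(X_1,X_2)$, $g$ is at least $\epsilon^{-2}$ times the bi-invariant metric, and $X_3$ is $g$-orthogonal to this span. So, exactly as in Proposition \ref{FibrationProp}, every path of $g$-length $<1$ projects under the bi-invariant Riemannian submersion $SU(2)\to SU(2)/\exp(\mathbb RX_3)\cong S^2$ to a curve of length $<\epsilon$.
\item Hence $B_g(e,1)$ lies in a solid tube of Haar measure $\lesssim\epsilon^2$, i.e.\ $\mu_g(B_g(e,1))\lesssim\epsilon$.
\item Moreover $\exp(c^2X_1)\in Q(c)$ projects to distance $\asymp c^2$ from the base point, so it lies outside $B_g(e,1)$ once $\epsilon\ll c^2$.
\end{itemize}
Thus the inclusion $Q(cr)\subset B_g(e,r)$ fails, and no uniform $c$ exists.

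\textbf{Why the box breaks.} For $a_3r\gtrsim 1$, the commutator $\exp(sX_3)\exp(tX_2)\exp(-sX_3)\exp(-tX_2)$ with $s$ of order one is no longer close to $\exp(c\,stX_1)$. Its $X_1$-component is $-t\sin(2s)+O(t^2)$, i.e.\ only $O(t)$. What the cheap circle actually buys is conjugation: it rotates $X_2$ into $X_1$, so $X_1$ becomes accessible at the linear rate $a_2$, not quadratically. The correct side is
\[
\phi_1(r)\asymp\min\bigl\{\max\bigl(a_1r,\ a_2r\min(a_3r,1)\bigr),\,1\bigr\},
\]
which gives Haar volume $\asymp(a_2r)^2$ on $1/a_3\le r\le 1/a_2$. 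This is the paper's third interval, where $\widetilde{Vol}_{\alpha_I}(r)=\alpha_1^{d_1}(\alpha_3/\alpha_2)^{d_3}r^{n-d_1}$, with $\alpha_i$ corresponding to $1/a_{4-i}$.

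\textbf{What this regime needs.} A BCH/Gronwall estimate will not do here; the regime needs its own mechanism. The paper gets the upper bound from the submersion estimate of Proposition \ref{FibrationProp}, with fibre tangent to the cheapest direction. It gets the lower bound from the commutator map $F^{\delta}$, using a box of fixed size in the cheap direction and $\delta=r/\alpha_2$. There the Jacobian factor $\delta^{d_3}$ of Proposition \ref{Vprime} produces exactly the linear growth (Proposition \ref{PostSubRiemannianLowerProp}).

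You correctly flag the saturated regimes as the main obstacle. But with the box as written, no case analysis in Euler coordinates can close the gap; the box itself has to change. For reference, the paper does not reprove this theorem separately. It recovers it from Theorem \ref{maintheorem} with $K=\{e\}$ and $\fr{m}_i=\langle e_i\rangle$ in a Milnor basis, i.e.\ through this same interval decomposition.
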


\noindent  They also conjectured that any compact, connected Lie group $G$ has the same property. A positive answer to the conjecture would have special importance, as the existence of a uniform volume doubling constant yields universal analytic consequences, such as uniform Poincaré inequalities, Harnack inequalities, heat kernel estimates and spectral gap estimates (\cite{EGS18}).

In general, one cannot rely on curvature comparison to obtain uniform doubling for left-invariant metrics on compact Lie groups, as their Ricci curvature may not admit a uniform lower bound. In the case of $SU(2)$, Eldredge, Gordina and Saloff-Coste developed an alternative approach, constructing distinct intervals for the radius of the ball $B_g(e,r)$, bounded by suitable ratios of the square roots of the metric eigenvalues. As the radius crosses those intervals, the volume growth transitions from Euclidean, to sub-Riemannian, and then becomes comparable to the volume growth in $\mathbb S^2$, independently of the metric eigenvalues. In other words, it becomes uniformly comparable to $r^3,r^4,r^2$ respectively. Technically, the authors relied on the property that any left-invariant metric on $SU(2)$ can be diagonalized with respect to a \emph{Milnor basis} (\cite{Mil76}), i.e., a basis $\{e_1,e_2,e_3\}$ of the Lie algebra $\fr{su}(2)$ satisfying the cyclic Lie-bracket relations

\begin{equation*} [e_1,e_2]=e_3, \ \  [e_2,e_3]=e_1 \ \ \makebox{and} \ \ [e_3,e_1]=e_2. \end{equation*}

\noindent A key feature of the Milnor basis is that if any metric direction, say $e_3$, becomes arbitrarily expensive to reach directly, it can instead be reached through the cheaper directions $e_1,e_2$ via the  bracket $[e_1,e_2]=e_3$. Besides, the Milnor basis allows the use of explicit identities between the basis vectors, which were further developed to establish the uniform doubling property for quotients of $SU(2)\times \mathbb R^n$ (\cite{EGS25}). However, the authors stressed that their original proof is quite specific to the structure of $SU(2)$ and does not directly generalize to other Lie groups (\cite{EGS25}). 

\subsection{Main results: Cyclic Lie brackets imply uniform doubling on homogeneous spaces}
Inspired by the work of Eldredge, Gordina and Saloff-Coste, we study the uniform doubling property on compact homogeneous spaces $G/K$. Our arguments follow the general strategy introduced in their work, in which the volume growth of metric balls is studied over suitable intervals of the radius. However, we refine and develop several differential-geometric and Lie-algebraic tools that allow us to replace the specific Milnor identities of $SU(2)$ by the structural condition $[\fr{m}_i,\fr{m}_j]=\fr{m}_k$, $i,j,k$ pairwise distinct, for the metric eigenspaces $\fr{m}_i$. In contrast to the one-dimensional eigenspaces of $SU(2)$, the subspaces $\fr{m}_i$ may have arbitrary dimension, and the cyclic bracket condition does not provide explicit identities between basis vectors. Our arguments rely only on this structural relation and the compactness of the space, and not on any additional interactions between the basis vectors. As a result, we show that the cyclic Lie-bracket structure essentially controls the volume growth across all radius intervals (Theorem \ref{VolumeGrowth}), thus extending the uniform doubling phenomenon of $SU(2)$ to a much broader class of homogeneous spaces of arbitrarily large dimension. More specifically, consider the following setting.

\begin{assum}\label{Assumption}Let $G/K$ be a homogeneous space with $G$ compact and connected, and let $o:=eK$ be the origin. Fix an $\op{Ad}$-invariant inner product $Q$ on $\fr{g}:=\op{Lie}(G)$, and let $\fr{g}=\fr{k}\oplus \fr{m}$ be a $Q$-orthogonal decomposition, where $\fr{k}=\operatorname{Lie}(K)$ and $\fr{m}$ is canonically identified with $T_o(G/K)$. Assume that  $\fr{m}$ admits a $Q$-orthogonal decomposition   

\begin{equation*} \fr{m}=\fr{m}_1\oplus \fr{m}_2\oplus \fr{m}_3, \end{equation*}
\noindent into $\op{Ad}_K$-invariant subspaces $\fr{m}_i$, such that

\begin{equation}\label{cyclicbracket}[\fr{m}_1,\fr{m}_2]=\fr{m}_3,  \ \ [\fr{m}_1,\fr{m}_3]=\fr{m}_2 \ \ \makebox{and}\ \ [\fr{m}_2,\fr{m}_3]=\fr{m}_1. \end{equation}

\noindent We endow $G/K$ with the $3$-parameter family of $G$-invariant metrics $\alpha_I$, induced by the corresponding $\op{Ad}_K$-invariant inner products   

\begin{equation}\label{MetricParameterForm}\langle \ , \ \ \rangle=\left.\alpha_1^2Q\right|_{\fr{m}_1\times \fr{m}_1}+\left.\alpha_2^2Q\right|_{\fr{m}_2\times \fr{m}_2}+\left.\alpha_3^2Q\right|_{\fr{m}_3\times \fr{m}_3}, \ \ \alpha_1,\alpha_2,\alpha_3>0.\end{equation}

\end{assum}

  Our main result is the following.

\begin{theorem}\label{maintheorem}Under Assumption \ref{Assumption}, there exists a constant $D$, depending only on the space $G/K$, such that $D_{\alpha_I}\leq D$ for all metrics $\alpha_I$ of the form \eqref{MetricParameterForm}.  In particular, the family $(G/K,\alpha_I)$ is uniformly doubling.\end{theorem}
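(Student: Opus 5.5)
By homogeneity it suffices to study balls centered at the origin $o$. The volume measure is the pushforward of Haar measure, so for all $p$ we have $\mu_{\alpha_I}(B(p,r))=\mu_{\alpha_I}(B(o,r))$. The plan is to compute $V(r):=\mu_{\alpha_I}(B(o,r))$ up to multiplicative constants depending only on $G/K$, in the spirit of Theorem \ref{EGSTheorem}. Set $n_i=\dim\fr{m}_i$. After relabeling, assume $\alpha_1\le\alpha_2\le\alpha_3$; the cyclic condition \eqref{cyclicbracket} is symmetric in the indices, so no generality is lost. Rescaling the metric by a constant does not change $D_{\alpha_I}$, so we may also normalize $\alpha_1=1$.

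The target is an explicit comparison function $\Phi(r)$, piecewise given by monomials in $r$ and the $\alpha_i$, with $V(r)\asymp\Phi(r)$. Any such $\Phi$ is doubling with a constant depending only on the exponents, and this gives Theorem \ref{maintheorem} at once. The expected regimes are as follows.
\begin{itemize}
\item For $r\lesssim\alpha_2$ the ball is Euclidean-like: $V(r)\asymp\alpha_2^{n_2}\alpha_3^{n_3}r^{n}$, with $n=n_1+n_2+n_3$.
\item Between $\alpha_2$ and roughly $\alpha_3/\alpha_2$ (more precisely, up to the scale where the $\fr{m}_3$ direction becomes cheaper through brackets than directly), the $\fr{m}_3$ directions are reached by commutators $[\fr{m}_1,\fr{m}_2]$ at cost about $\sqrt{\alpha_2\,t}$ to move a distance $t$ in $\fr{m}_3$. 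This contributes a factor of order $(r^2/\alpha_2)^{n_3}$ in place of $(r/\alpha_3)^{n_3}$.
\item Analogous regimes follow in which $\fr{m}_2$, and then $\fr{m}_3$, are also reached through brackets with $\fr{m}_1$.
\item The last regime is saturation at the full volume, once $r$ exceeds the diameter, whose order is computed along the way.
\end{itemize}
Away from the thresholds, where the ratios $\alpha_j/\alpha_i$ are not comparable, the transitions are controlled by these ratios.

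The upper bound on $V(r)$ is the easier half. I would write points as $\exp(X_1+X_2+X_3)\cdot o$, or as products $\exp(X_1)\exp(X_2)\exp(X_3)\cdot o$, in exponential coordinates of the second kind. I would then show that $d(o,\exp(X)\cdot o)\le r$ forces $|X_i|_Q\lesssim$ the regime-dependent bounds above. This is a ball–box estimate. It would use the $\op{Ad}$-invariance of $Q$ to bound how a horizontal curve can build $\fr{m}_j$-components: by \eqref{cyclicbracket}, the $\fr{m}_3$ component of the development grows at most through products of the $\fr{m}_1$ and $\fr{m}_2$ speeds. Compactness gives a global bound, hence injectivity radius and Jacobian bounds uniform in $\alpha_I$ on a fixed neighborhood. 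The Haar density in these coordinates is then comparable to Lebesgue measure, so $V(r)$ is bounded by the Lebesgue volume of the box.

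The main obstacle is the lower bound: showing that the box is actually filled. For this I need a quantitative surjectivity statement. The map $(X_1,X_2)\mapsto$ the $\fr{m}_3$-part of the commutator $\exp(X_1)\exp(X_2)\exp(-X_1)\exp(-X_2)$ should cover a ball of radius $c\,|X_1||X_2|$ in $\fr{m}_3$, with $c$ independent of the metric. Only $[\fr{m}_1,\fr{m}_2]=\fr{m}_3$ is available here, with no Milnor identities. I would proceed in three steps:
\begin{itemize}
\item By compactness of the unit spheres, the bilinear map $[\cdot,\cdot]:\fr{m}_1\times\fr{m}_2\to\fr{m}_3$ is onto and admits a uniform right inverse on bounded sets: there are finitely many pairs $(u^a,v^a)$ with $[u^a,v^a]$ spanning $\fr{m}_3$.
\item Concatenate a bounded number of such commutator loops and apply the inverse function theorem to a fixed smooth map $\mathbb R^N\to G/K$, whose domain is scaled by $r$ and the $\alpha_i$. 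Its differential at the relevant point is nondegenerate with constants determined by $G/K$ alone.
\item Use the Baker–Campbell–Hausdorff error estimates to control the higher-order terms. These terms lie in $\fr{k}\oplus\fr{m}$; the $\fr{k}$-parts act trivially at $o$, and this is where the $\op{Ad}_K$-invariance of the $\fr{m}_i$ is needed.
\end{itemize}
Gluing the regimes so that the constants at each threshold are uniform, especially when two of the $\alpha_i$ are comparable, is the delicate bookkeeping step. The upper and lower bounds together give $V\asymp\Phi$, and therefore a uniform doubling constant $D$.
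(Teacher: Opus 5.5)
Your overall architecture matches the paper's: a continuous piecewise-monomial comparison function, ball--box upper bounds by bracket counting, and commutator-based lower bounds. However, the regimes you propose are wrong, so the $\Phi$ you would try to match is not the order of $Vol_{\alpha_I}$. Below, $d_i=n_i=\dim\fr{m}_i$ and $\alpha_1\le\alpha_2\le\alpha_3$.

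The Euclidean regime ends at $r\approx\alpha_1\alpha_2/\alpha_3\le\alpha_1$. That is exactly where your own cost comparison ($\sqrt{\alpha_1\alpha_2 t}$ versus $\alpha_3 t$) says brackets become cheaper, so it does not extend to $r\lesssim\alpha_2$. Your formula $\alpha_2^{n_2}\alpha_3^{n_3}r^n$ (with $\alpha_1=1$) is just the crude bound coming from $B_{\alpha_I}(o,r)\subseteq B_{Q_{\fr{m}}}(o,r/\alpha_1)$. The true order there is $r^n$. Indeed, at $r=\alpha_2$ your formula equals $\alpha_2^{n}\Lambda_{\alpha_I}$, which exceeds the total volume $\Lambda_{\alpha_I}\mu_0(G/K)$ once $\alpha_2$ is large.

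The sub-Riemannian regime is $[\alpha_1\alpha_2/\alpha_3,\alpha_1]$, with $Vol_{\alpha_I}(r)\asymp(\alpha_3/(\alpha_1\alpha_2))^{d_3}r^{n+d_3}$. It must stop at $r\approx\alpha_1$, because the $\fr{m}_1$-box has $Q$-size $r/\alpha_1$ in a compact space. There is no later regime in which $\fr{m}_2$ is reached through brackets. On $[\alpha_1,\alpha_2]$ the $\fr{m}_1$-directions are saturated, while $\fr{m}_2$ and $\fr{m}_3$ both have $Q$-size $\asymp r/\alpha_2$. The latter comes via $[\fr{m}_1,\fr{m}_2]$ with an $\fr{m}_1$-factor of size $O(1)$. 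This gives $Vol_{\alpha_I}(r)\asymp\alpha_1^{d_1}(\alpha_3/\alpha_2)^{d_3}r^{n-d_1}$, and saturation begins at $r\approx\alpha_2$.

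Correcting this exposes two missing ingredients.

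\emph{First, your upper bound is purely local.} Only $\alpha_I\ge\alpha_1^2Q$ keeps the ball inside a fixed $Q$-neighbourhood of $o$, so a fixed exponential chart covers only $r\lesssim\alpha_1$. For larger $r$ the ball contains $\pi(\exp tX)$ for a unit $X\in\fr{m}_1$ and all $|t|\le r/\alpha_1$, so it leaves every fixed chart. The paper treats $[\alpha_1,\alpha_2]$ globally (Proposition~\ref{FibrationProp}):
\begin{itemize}
\item $H:=N_G(\fr{k}\oplus\fr{m}_1)$ is closed with Lie algebra $\fr{k}\oplus\fr{m}_1$; this is Lemma~\ref{LemmaRelaxed}, which again uses the cyclic condition.
\item The projection $f:(G/K,Q_{\fr{m}})\to(G/H,Q_{\fr{q}})$, with $\fr{q}=\fr{m}_2\oplus\fr{m}_3$, is a Riemannian submersion whose fibres all have the same volume.
\item $\alpha_I\ge\alpha_2^2Q$ on $\fr{q}$ gives $B_{\alpha_I}(o,r)\subseteq f^{-1}(B_{Q_{\fr{q}}}(eH,r/\alpha_2))$, and Fubini finishes.
\end{itemize}
Even for $r\lesssim\alpha_1$ there is a point to handle. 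The local lift $\exp v(t)$ of a geodesic has a $\fr{k}$-component in its Darboux derivative, and this must be bounded by its $\fr{m}$-component (Lemma~\ref{KcomponentBound}) before the bracket counting closes.

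\emph{Second, your lower bound lacks metric-independent constants.} At the origin, the differential of any fixed map built from commutators has image missing $\fr{m}_3$. The boxes you need are also anisotropic, with a ratio $\alpha_1/\alpha_2$ that may tend to $0$. So ``the inverse function theorem at the relevant point'' gives no constants independent of the metric. The real difficulty is degenerating ratios, not comparable ones.

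The paper's device is the family $F^\delta$ built from $H^k(\op{sgn}(z_k)\sqrt{|z_k|},\delta\sqrt{|z_k|})$. One takes $\delta=\alpha_1/\alpha_2$ on the sub-Riemannian interval and $\delta=r/\alpha_2$ on $[\alpha_1,\alpha_2]$. The map is $\mathcal{C}^1$ with $dF^\delta_0(X+Y+Z)=X+Y+\delta Z$. After factoring $\delta$ out of the $z_k$-derivatives, everything extends continuously to $\delta=0$. Compactness of $[0,1]$ then gives injectivity and $J_{F^\delta}\ge\frac12\delta^{d_3}$ on one fixed ball for all $\delta\in(0,1]$ (Proposition~\ref{Vprime}).
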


To describe the volume growth of the spaces $(G/K,\alpha_I)$, set 

\[ d_i:=\dim{\fr{m}_i},\ \  i=1,2,3, \]

\noindent and 

\[ n:=d_1+d_2+d_3=\dim{G/K}. \]

\noindent Relabeling the subspaces $\fr{m}_i$, we may assume that

\[ \alpha_1\leq \alpha_2\leq \alpha_3.\]

\noindent Note that since the metrics $\alpha_I$ are $G$-invariant, the volume of $B_{\alpha_I}(p,r)$ is independent of $p\in G/K$, and thus it suffices to consider metric balls centered at the origin $o$. Denote the volume of a ball centered at $o$ with radius $r$ by $Vol_{\alpha_I}(r):=\mu_{\alpha_I}(B_{\alpha_I}(o,r))$. Moreover, define the continuous function $\widetilde{Vol}_{\alpha_I}(r):(0,+\infty)\rightarrow (0,+\infty)$ by

\[ \widetilde{Vol}_{\alpha_I}(r):=\begin{cases} r^n,  & 0<r\leq \frac{\alpha_1\alpha_2}{\alpha_3},\\ 
\big(\frac{\alpha_3}{\alpha_1\alpha_2}\big)^{d_3}r^{n+d_3},  & \frac{\alpha_1\alpha_2}{\alpha_3}<r\leq \alpha_1, \\  
\alpha_1^{d_1}(\frac{\alpha_3}{\alpha_2})^{d_3}r^{n-d_1}, &  \alpha_1<r\leq \alpha_2, \\ 
\alpha_1^{d_1}\alpha_2^{d_2}\alpha_3^{d_3}, & r>\alpha_2  \end{cases}. \]

\noindent We have the following volume growth estimate. 

\begin{theorem}\label{VolumeGrowth} Under Assumption \ref{Assumption}, there exist positive constants $b_1,b_2$, depending only on the space $G/K$, such that 

    \[ b_1\leq \frac{Vol_{\alpha_I}(r)}{\widetilde{Vol}_{\alpha_I}(r)}\leq b_2, \]

    \noindent for every $r>0$ and every metric $\alpha_I$ of the form \eqref{MetricParameterForm}.

\end{theorem}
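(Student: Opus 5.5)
The plan is to compare the ball $B_{\alpha_I}(o,r)$ with explicit "boxes" in exponential-type coordinates on $G/K$, and to do so separately on each of the four radius ranges in the definition of $\widetilde{Vol}_{\alpha_I}$. Write $|X|_Q$ for the $Q$-norm. By the scaling $\alpha_I\mapsto \lambda\alpha_I$, which multiplies distances by $\lambda$ and volumes by $\lambda^n$, both sides of the estimate are homogeneous in the same way. We may therefore normalize $\alpha_2=1$, so that $\alpha_1\le 1\le\alpha_3$.

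Fix a small neighbourhood $U$ of $0$ in $\fr{m}=\fr{m}_1\oplus\fr{m}_2\oplus\fr{m}_3$. We use the product-of-exponentials chart
\[
\Phi(X_1,X_2,X_3)=\exp(X_1)\exp(X_2)\exp(X_3)\cdot o .
\]
Its Jacobian with respect to $\mu_Q$ is bounded above and below on $U$ by constants depending only on $G/K$. Since $\mu_{\alpha_I}=\alpha_1^{d_1}\alpha_2^{d_2}\alpha_3^{d_3}\mu_Q$, volumes of sets in this chart are computed, up to constants, as $\alpha_1^{d_1}\alpha_3^{d_3}$ times their Lebesgue volume. The last regime $r>\alpha_2$ reduces to proving that the diameter of $(G/K,\alpha_I)$ is $\le C\alpha_2$. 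Indeed, $\mu_{\alpha_I}(G/K)$ is exactly $c\,\alpha_1^{d_1}\alpha_2^{d_2}\alpha_3^{d_3}$, and monotonicity then yields both bounds as soon as the matching lower bound holds at $r=\alpha_2$.

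\textbf{Upper bounds.} For any horizontal curve $\gamma$ of length $\le r$, we lift $\gamma$ to $G$ horizontally with respect to $\fr{m}$ and write the lift in the chart. The $\fr{m}_i$-component of the velocity has $Q$-norm at most $|\dot\gamma|/\alpha_i$. Hence the $\fr{m}_1,\fr{m}_2$ coordinates stay of size $\lesssim r/\alpha_1$ and $r/\alpha_2$, and they are also confined to the compact space. The $\fr{m}_3$ coordinate picks up, besides the direct displacement $\lesssim r/\alpha_3$, a bracket contribution. By a Campbell--Hausdorff/area estimate this contribution is at most $\lesssim |X_1|\,|X_2|$, that is $\lesssim r^2/(\alpha_1\alpha_2)$, using $[\fr{m}_1,\fr{m}_2]\subset\fr{m}_3$. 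We get analogous cross terms in $\fr{m}_1$ and $\fr{m}_2$ coming from $[\fr{m}_2,\fr{m}_3]$ and $[\fr{m}_1,\fr{m}_3]$; these are controlled by the direct terms precisely because $\alpha_1\le\alpha_2\le\alpha_3$.

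Multiplying the resulting box sizes gives the required bounds. For $r\le \alpha_1\alpha_2/\alpha_3$ the box volume is $r^n$. For the middle range, where the $\fr{m}_3$-size is $r^2/(\alpha_1\alpha_2)$, we get $(\alpha_3/\alpha_1\alpha_2)^{d_3}r^{n+d_3}$. For $\alpha_1<r\le\alpha_2$, the $\fr{m}_1$-factor saturates at the compact size, which accounts for the factor $\alpha_1^{d_1}$.

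\textbf{Lower bounds.} We need points of the box to be reachable within distance $Cr$. Moving in $\fr{m}_1$ and $\fr{m}_2$ directly is cheap. To move in $\fr{m}_3$ by $Z$, use the cyclic condition $[\fr{m}_1,\fr{m}_2]=\fr{m}_3$. Surjectivity of the bracket map $\fr{m}_1\times\fr{m}_2\to\fr{m}_3$, together with compactness of the unit spheres, gives a uniform constant $c>0$: every unit $Z\in\fr{m}_3$ can be written as $\sum_{j\le N}[X_j,Y_j]$ with $|X_j|,|Y_j|\le c^{-1}$, where $N\le d_3$. This replaces the Milnor identities. The commutator loops $e^{sX_j}e^{tY_j}e^{-sX_j}e^{-tY_j}$ then produce an $\fr{m}_3$-displacement $st[X_j,Y_j]+O(\text{cubic})$ at cost $\lesssim \alpha_1 s+\alpha_2 t$.

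To make this exact rather than approximate, we use the implicit function theorem / open-mapping argument in the manner of Eldredge--Gordina--Saloff-Coste. Rescaled commutator maps are applied uniformly on a compact parameter set, and they cover a box of comparable volume. In the regime $r>\alpha_1$ the $\fr{m}_1$ direction becomes cheap globally. There we exploit $[\fr{m}_1,\fr{m}_3]=\fr{m}_2$ and $[\fr{m}_1,\fr{m}_2]=\fr{m}_3$: the subgroup generated by $\exp\fr{m}_1$ orbits covers a full $\fr{m}_1$-fibre's worth of volume, which gives the factor $\alpha_1^{d_1}$ and, at $r\sim\alpha_2$, the diameter bound.

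\textbf{Main obstacle.} The hardest step is the uniformity of the lower bound in the intermediate regimes. We must show that the commutator maps cover a set of volume comparable to the box with constants independent of $\alpha_I$, even though the error terms in Campbell--Hausdorff involve brackets into other eigenspaces. I would first try to rescale coordinates by the box sizes. The rescaled maps then converge, as the ratios $\alpha_i/\alpha_j$ degenerate, to nilpotent or limiting models whose differentials are surjective at some point. A compactness argument over the closure of the parameter space (ratios in $[0,1]$) would then give uniform constants. Verifying surjectivity of each limit map is exactly where the full cyclic condition \eqref{cyclicbracket} is needed.
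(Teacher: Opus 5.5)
Your overall plan is the paper's. You use boxes in each radius range, and commutators $\exp(-sX)\exp(-tY)\exp(sX)\exp(tY)$ with $\alpha_1 s\approx\alpha_2 t$ for the lower bounds. For the upper bounds you use a Campbell--Hausdorff/Chen--Strichartz expansion with bracket bookkeeping, and for $r>\alpha_2$ monotonicity together with $\mu_{\alpha_I}(G/K)=\Lambda_{\alpha_I}\mu_0(G/K)$. For $r\leq\eta\alpha_1$ your upper bound is essentially Proposition \ref{UpperHeisenbergProp}, and on that range it does make Bishop--Gromov unnecessary. Your horizontal lift removes the $\fr{k}$-part of the Darboux derivative, but $\log\tilde\gamma(1)$ then has a $\fr{k}$-component that you must absorb before reading off coordinates on $G/K$.

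The genuine gap is the upper bound once $r$ is not small compared with $\alpha_1$. For $\eta\alpha_1<r\leq\alpha_2$, a minimizing curve may travel $Q$-distance of order $r/\alpha_1\gg 1$ along $\fr{m}_1$. It then leaves every fixed chart and the expansion is no longer available. Your bracket estimate would also give an $\fr{m}_3$-extent $r^2/(\alpha_1\alpha_2)$ instead of the correct $r/\alpha_2$. ``The $\fr{m}_1$-factor saturates at the compact size'' is exactly what must be proved, and no local chart argument proves it.

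The missing idea is to quotient out $\fr{m}_1$ globally. The cyclic condition gives $[\fr{m}_1,\fr{m}_1]\subseteq\fr{k}$, so $\fr{h}=\fr{k}\oplus\fr{m}_1$ is a subalgebra and $H=N_G(\fr{h})$ is a closed subgroup with Lie algebra $\fr{h}$ (Lemma \ref{LemmaRelaxed}). Then $G/K\to G/H$ is a Riemannian submersion for the $Q$-metrics, with horizontal space $\fr{m}_2\oplus\fr{m}_3$ on which $\alpha_I\geq\alpha_2^2Q$. Hence $B_{\alpha_I}(o,r)$ lies in the preimage of a $Q$-ball of radius $r/\alpha_2$ in $G/H$. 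Fubini over the fibres then gives $\mu_0(B_{\alpha_I}(o,r))\lesssim(r/\alpha_2)^{n-d_1}$ for all $r\leq\alpha_2$ (Proposition \ref{FibrationProp}). This global bound is also what covers $[\eta\alpha_1,\alpha_1]$ in the sub-Riemannian regime (Lemma \ref{FixBound}). Since you dispense with Bishop--Gromov, it is also what you need for the part of the Euclidean regime beyond $\eta\alpha_1$ when $\alpha_2/\alpha_3>\eta$.

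The lower bound is left at the level of a plan exactly where the work lies. What is needed is not surjectivity of limiting differentials at a point. It is injectivity and a Jacobian lower bound on a neighbourhood that does not depend on the $\alpha_i$. Your compactness argument over degenerating nilpotent limits would have to supply both, and you do not carry it out.

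The paper does this with a single family $F^{\delta}$, $\delta\in[0,1]$. Writing the commutator products as $H^k(\op{sgn}(z_k)\sqrt{|z_k|},\delta\sqrt{|z_k|})=\exp(\delta z_k h^k)$, with $h^k$ smooth and $h^k(0,0)=E_3^k$, makes $F$ jointly $\mathcal{C}^1$. It also gives $\partial F^{\delta}/\partial z_k=\delta f_k(v,\delta)$ with $f_k(0,\delta)=E_3^k$. After composing with $\Phi^{-1}$ and rescaling the $\fr{m}_3$-variables by $\delta^{-1}$, the differential stays within $\frac12$ of the identity on a fixed ball for all $\delta$. This yields injectivity and $J_{F^{\delta}}\geq\frac12\delta^{d_3}$ (Proposition \ref{Vprime}).

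With this family, $\delta=\alpha_1/\alpha_2$ handles $r\leq\alpha_1$. For $\alpha_1\leq r\leq\alpha_2$ one takes $\delta=r/\alpha_2$ with a box of fixed size $\epsilon$ in $\fr{m}_1$, size $\epsilon r/\alpha_2$ in $\fr{m}_2$, and $\epsilon^2$ in the $z$-variables (Proposition \ref{PostSubRiemannianLowerProp}). The required $\fr{m}_3$-extent $r/\alpha_2$ comes from commutators of $O(1)$ moves in $\fr{m}_1$ with $O(r/\alpha_2)$ moves in $\fr{m}_2$. Your sketch of that regime does not say where this extent comes from. It appeals to orbits of $\exp\fr{m}_1$ and to $[\fr{m}_1,\fr{m}_3]=\fr{m}_2$, and the latter plays no role in the lower bound.
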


\subsection{Applications}  To the author's knowledge, there exist only few known classes of uniformly doubling spaces: abelian Lie groups, $SU(2)$ (\cite{EGS18}), quotients of $SU(2)\times \mathbb R^n$ (\cite{EGS25}) and very recently, step-two Carnot groups (\cite{BiZha26}).  Theorem \ref{maintheorem} provides a new broad class of such spaces, and in several cases, establishes uniform doubling for the complete family of their $G$-invariant metrics.

 More specifically, Theorem \ref{maintheorem} immediately recovers Theorem \ref{EGSTheorem}, for $Q$ being the negative of the Killing form and $\fr{m}_i=<e_i>$, $i=1,2,3$. Our results also apply to the class of \emph{generalized Wallach spaces}, of which $SU(2)$ is the simplest example. Generalized Wallach spaces have been classified independently in \cite{Nik16} (correction in \cite{Nik21}) and in \cite{CheKaLi16} for $G$ simple. Their isotropy representation decomposes as $\fr{m}=\fr{m}_1\oplus \fr{m}_2\oplus \fr{m}_3$, where the submodules $\fr{m}_i$ are $\op{Ad}_K$-irreducible and satisfy $[\fr{m}_i,\fr{m}_i]\subseteq \fr{k}$. They include the spaces listed in Table \ref{table}, for which the submodules $\fr{m}_i$ are pairwise inequivalent and hence the metrics $\alpha_I$ exhaust their $G$-invariant metrics. We prove that any generalized Wallach space is uniformly doubling on the set of metrics $\alpha_I$.
 
 Generalized Wallach spaces with $G$ simple are themselves a subclass of compact \emph{$\mathbb Z_2\times \mathbb Z_2$-symmetric spaces}, classified for $G$ compact and simple in \cite{BaGo08} and \cite{Kol09}. The latter are defined as almost effective homogeneous spaces $G/K$ with $G$ compact and connected, for which there exists an injective homomorphism $\rho:\mathbb Z_2\times \mathbb Z_2\rightarrow \op{Aut}(G)$ such that 
 
 \[ G^{\rho}_0\subseteq K \subseteq G^{\rho}.\]

 \noindent Here $G^{\rho}$ denotes the subgroup of fixed points of $\rho(\mathbb Z_2\times \mathbb Z_2)$ and $G^{\rho}_0$ denotes its connected component at $e$.  We prove that compact $\mathbb Z_2\times\mathbb Z_2$-symmetric spaces with $G$ simple satisfy Assumption \ref{Assumption}, as a result of the induced $\mathbb Z_2\times\mathbb Z_2$-grading and the simplicity of $\fr{g}$. More generally, we show the following.
    
\begin{theorem}\label{TheoremWallach}Let $G/K$ be a $\mathbb Z_2\times \mathbb Z_2$-symmetric space with $G$ compact and simple or a generalized Wallach space. Then $G/K$ is uniformly doubling on the family of $G$-invariant metrics $\alpha_I$. Moreover, suppose that $G/K$ is one of the generalized Wallach spaces in Table \ref{table}, the Lie group $SU(2)$ or the Stiefel manifold $V_2\mathbb R^n=SO(n)/SO(n-2)$, $n\geq 3$, $n\neq 4$. Then the metrics $\alpha_I$ exhaust the $G$-invariant metrics of $G/K$ up to isometry, and thus $G/K$ is uniformly doubling on the complete family of $G$-invariant metrics. \end{theorem}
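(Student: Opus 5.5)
The plan is to reduce everything to Theorem \ref{maintheorem}: it suffices to check that each space in the statement satisfies Assumption \ref{Assumption}, with the metrics $\alpha_I$ being exactly the family in \eqref{MetricParameterForm}. The exhaustion claims for the listed spaces are then handled separately.

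\textbf{$\mathbb Z_2\times\mathbb Z_2$-symmetric spaces.} Let $\rho(\mathbb Z_2\times\mathbb Z_2)=\{1,\sigma_1,\sigma_2,\sigma_3\}$, with $\sigma_3=\sigma_1\sigma_2$, and let $Q=-B$, which is $\op{Ad}$- and $\op{Aut}$-invariant.
\begin{itemize}
\item The commuting involutions $d\sigma_i$ give a simultaneous eigenspace decomposition $\fr{g}=\fr{g}_0\oplus\fr{g}_1\oplus\fr{g}_2\oplus\fr{g}_3$. Here $\fr{g}_0=\fr{k}$, and $\fr{g}_i$ is the common eigenspace with eigenvalue $+1$ for $\sigma_i$ and $-1$ for the other two.
\item This is a $\mathbb Z_2\times\mathbb Z_2$-grading, so $[\fr{g}_i,\fr{g}_j]\subseteq\fr{g}_k$ for $\{i,j,k\}=\{1,2,3\}$ and $[\fr{g}_i,\fr{g}_i]\subseteq\fr{k}$.
\item The spaces $\fr{m}_i:=\fr{g}_i$ are $Q$-orthogonal, being eigenspaces of $Q$-isometries. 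They are $\op{Ad}_K$-invariant because $K\subseteq G^\rho$ commutes with the $\sigma_i$.
\end{itemize}
The remaining task is to upgrade the inclusions to equalities $[\fr{m}_i,\fr{m}_j]=\fr{m}_k$, using simplicity of $\fr{g}$. Suppose $\fr{a}:=[\fr{m}_1,\fr{m}_2]\subsetneq\fr{m}_3$ and set $\fr{b}:=\fr{a}^\perp\cap\fr{m}_3\neq0$. By invariance of $Q$,
\[
Q([\fr{m}_1,\fr{b}],\fr{m}_2)=Q(\fr{b},[\fr{m}_1,\fr{m}_2])=0,
\]
and $[\fr{m}_1,\fr{b}]\subseteq\fr{m}_2$, so $[\fr{m}_1,\fr{b}]=0$. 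The same argument with $\fr{m}_2$ gives $[\fr{m}_2,\fr{b}]=0$. One then checks that the subspace generated by iterated brackets of $\fr{b}$ with $\fr{k}$ and $\fr{m}_3$ is a proper nonzero ideal. An alternative route: $\fr{h}:=\fr{k}\oplus\fr{m}_1\oplus\fr{m}_2\oplus[\fr{m}_1,\fr{m}_2]$ together with $\fr{b}$ yields a nontrivial orthogonal splitting into ideals, contradicting simplicity.

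This ideal argument is the step I expect to be the main obstacle. Brackets $[\fr{m}_3,\fr{b}]\subseteq\fr{k}$ must be controlled, and nonzero $\fr{m}_i$ must be guaranteed; the latter follows from injectivity of $\rho$ together with almost effectiveness. My first attempt would be to show that
\[
\fr{i}:=\fr{b}+[\fr{b},\fr{m}_3]+[[\fr{b},\fr{m}_3],\fr{m}_3]+\cdots
\]
commutes with $\fr{m}_1\oplus\fr{m}_2$ by the Jacobi identity. Its orthogonal complement then gives the splitting.

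\textbf{Generalized Wallach spaces.} Here $[\fr{m}_i,\fr{m}_i]\subseteq\fr{k}$ and the $\fr{m}_i$ are $\op{Ad}_K$-irreducible. Invariance of $Q$ forces $[\fr{m}_i,\fr{m}_j]\subseteq\fr{m}_k$. The image $[\fr{m}_1,\fr{m}_2]$ is an $\op{Ad}_K$-invariant subspace of the irreducible module $\fr{m}_3$, so it is $0$ or $\fr{m}_3$. If it were $0$, invariance would make all the cyclic brackets vanish, and $\fr{k}\oplus\fr{m}_i$ would be ideals. This contradicts the standing nondegeneracy in the definition (or, for non-simple $G$, reduces to a product, which is excluded by definition). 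Hence equality holds.

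\textbf{Exhaustion for the listed spaces.} For the spaces of Table \ref{table}, the $\fr{m}_i$ are pairwise inequivalent and irreducible. By Schur's lemma every $\op{Ad}_K$-invariant inner product is then diagonal of the form \eqref{MetricParameterForm}.

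For $SU(2)$, Milnor's diagonalization gives the result, and conjugating by automorphisms (isometries) puts the metric in Milnor form.

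For $V_2\mathbb R^n=SO(n)/SO(n-2)$, the decomposition is $\fr{m}=\fr{m}_0\oplus\fr{m}_1\oplus\fr{m}_2$, with $\fr{m}_0=\fr{so}(2)$ and $\fr{m}_1\cong\fr{m}_2\cong\mathbb R^{n-2}$ equivalent. General invariant metrics carry an off-diagonal term. For $n\neq4$, the normalizer $SO(2)\times SO(n-2)$ acts on $\fr{m}_1\oplus\fr{m}_2\cong\mathbb R^2\otimes\mathbb R^{n-2}$ by rotating the $\mathbb R^2$ factor. This rotation diagonalizes the off-diagonal part, producing an isometric metric of diagonal form that satisfies Assumption \ref{Assumption}, since it is $\mathbb Z_2\times\mathbb Z_2$-symmetric. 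The exclusion $n\neq4$ reflects the extra equivalences that appear when $n=4$.
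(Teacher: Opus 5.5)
\textbf{Gap: the product case of generalized Wallach spaces.} After showing that $[\fr{m}_1,\fr{m}_2]$ is either $\{0\}$ or $\fr{m}_3$, you discard the first alternative as \emph{excluded by definition}. The definition of a generalized Wallach space contains no such restriction: it asks only for compact semisimple $G$, three pairwise orthogonal irreducible summands, and $[\fr{m}_i,\fr{m}_i]\subseteq\fr{k}$.

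By Remark \ref{ProductLocal}, the alternative $[\fr{m}_i,\fr{m}_j]=\{0\}$ for all $i\neq j$ occurs exactly when $G/K$ is locally a product of three irreducible compact symmetric spaces. These products form the first row of Table \ref{table}, so both parts of the theorem concern them. (Your side claim that the $\fr{k}\oplus\fr{m}_i$ would then be ideals is also false, since $[\fr{k},\fr{m}_j]\subseteq\fr{m}_j$ need not vanish for $j\neq i$.)

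For these spaces Assumption \ref{Assumption} fails, so Theorem \ref{maintheorem} cannot be invoked and a separate argument is needed. The paper's argument is short. All projections $[\fr{m}_i,\fr{m}_j]_{\fr{m}}$, $i,j=1,2,3$, vanish, so the constant $A_g$ of Proposition \ref{RicciBoundProp} is $0$. Hence $\op{Ric}_{\alpha_I}\geq 0$ for every diagonal metric (Remark \ref{RemarkRicciProduct1}), and Bishop--Gromov gives $D_{\alpha_I}\leq 2^n$ uniformly (Remark \ref{UniformDoublingProduct}). Without this case your proof does not cover the statement.

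\textbf{Smaller points.} Your first route in the $\mathbb Z_2\times\mathbb Z_2$ case does close up:
\begin{itemize}
\item $\fr{k}\oplus\fr{m}_3$ normalizes $\fr{m}_1\oplus\fr{m}_2$, so by the Jacobi identity the centralizer of $\fr{m}_1\oplus\fr{m}_2$ is $\op{ad}(\fr{k}\oplus\fr{m}_3)$-stable.
\item Hence the $\op{ad}(\fr{k}\oplus\fr{m}_3)$-span of $\fr{b}$ lies in that centralizer. It is therefore also stable under $\op{ad}(\fr{m}_1\oplus\fr{m}_2)$, so it is a nonzero ideal and equals $\fr{g}$.
\item Then $\fr{m}_1\oplus\fr{m}_2$ is central, hence zero. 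This gives $\sigma_3=\op{id}$, contradicting injectivity of $\rho$.
\end{itemize}
The paper argues more directly: $\fr{s}+[\fr{s},\fr{s}]$, with $\fr{s}=\fr{m}_i\oplus\fr{m}_j$, is a nonzero ideal, hence equals $\fr{g}$, which forces $[\fr{m}_i,\fr{m}_j]=\fr{m}_k$.

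Your alternative route via $\fr{h}$ and $\fr{b}$ is unjustified, since nothing you say gives $[\fr{k},\fr{b}]=0$ or $[\fr{m}_3,\fr{b}]=0$.

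Finally, your $SO(2)$-rotation argument for $V_2\mathbb R^n$ covers only $n\geq 5$. For $n=3$ the isotropy is trivial and all three one-dimensional summands are equivalent. Off-diagonal terms involving $\fr{so}(2)$ then survive the rotation. There you need the Milnor diagonalization transported through the double cover $SU(2)\rightarrow SO(3)$, as in the paper.
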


\subsection{A global Poincar\'{e} inequality for compact homogeneous spaces}

An important consequence of the uniform doubling property on Lie groups is the existence of Poincar\'{e} inequalities that hold uniformly for all left-invariant metrics (\cite{EGS18}). For compact homogeneous spaces $(G/K,g)$ with connected $K$, we derive a Poincar\'{e} inequality that relies on the doubling constant of the space. 

\begin{theorem}\label{PoincareTheorem}   Let $G/K$ be a homogeneous space with origin $o=eK$, where $G$ is a compact connected Lie group and $K$ is a closed connected subgroup of $G$. Let $g$ be a $G$-invariant metric on $G/K$, let $\mu_g$ be the corresponding Riemannian volume measure and let $D_g$ be the corresponding doubling constant.  Then the following inequality holds

\begin{equation}\label{PoincareInequality}
 \int_{B_g(p,r)}{|f-f_{B_g(p,r)}|^2d\mu_g}\leq 2D_gr^2\int_{B_g(p,2r)}{\|\nabla_g{f}\|_g^2d\mu_g}, \end{equation}
  
\noindent for all $p\in G/K$, $r>0$ and $f\in \mathcal{C}^{\infty}(G/K)$, where 

\[ f_{B_g(p,r)}=\frac{1}{\mu_g(B_g(p,r))}\int_{B_g(p,r)}{fd\mu_g},\]

\noindent denotes the mean of $f$ over $B_g(p,r)$ and $\nabla_g{f}$ denotes the gradient of $f$ with respect to $g$.  
\end{theorem}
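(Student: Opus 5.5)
The plan is to follow the Jerison-type averaging argument that underlies the Poincaré inequality on Lie groups (as in \cite{EGS18}), and to transplant it to $G/K$ by working with the Riemannian distance and isometries of $G$.

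\textbf{Reduction to a pair integral.} Write $B=B_g(p,r)$. We start from the elementary identity
\[
\int_B |f-f_B|^2\,d\mu_g \;\le\; \frac{1}{\mu_g(B)}\int_B\int_B |f(x)-f(y)|^2\,d\mu_g(x)\,d\mu_g(y).
\]
(It holds in fact with a factor $\tfrac12$.) So it suffices to bound the double integral of $|f(x)-f(y)|^2$ over $B\times B$.

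\textbf{Path parametrisation.} For $x,y\in B$ we join them by a path of length at most $2r$ that stays inside $B_g(p,2r)$. The natural choice is a minimizing geodesic, which exists by compactness. Parametrising the geodesic $\gamma_{x,y}$ by arclength on $[0,\ell]$ with $\ell=d_g(x,y)<2r$, Cauchy--Schwarz gives
\[
|f(x)-f(y)|^2\le \ell\int_0^\ell \|\nabla_g f(\gamma_{x,y}(s))\|_g^2\,ds .
\]
Integrating over $x,y$ then requires a change of variables $(x,y,s)\mapsto \gamma_{x,y}(s)$ whose Jacobian is controlled. Minimizing geodesics are not canonical, and their Jacobians involve curvature, which we want to avoid.

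\textbf{Using homogeneity instead.} Here homogeneity should help. Connectedness of $K$ means $G/K$ is simply connected modulo $\pi_1$ considerations only through $G$; more usefully, every point of $G$ acts isometrically. The plan is to write $y=h\cdot x$ for an element $h\in G$ and to average over $G$ with Haar measure, so that translation invariance replaces the Jacobian estimate. Concretely, we pick a horizontal path in $G$ (a lift of the geodesic from $o$) $c:[0,\ell]\to G$ with $c(0)=e$, $|\dot c|\le 1$ in the submersion metric on $G$ induced by $g$ on $\fr m$ and some metric on $\fr k$. Projection $G\to G/K$ is then a Riemannian submersion, so
\[
f(c(\ell)z)-f(z)=\int_0^\ell df(\cdot)\,ds.
\]
We then integrate over $z\in B$, and the map $z\mapsto c(s)z$ preserves $\mu_g$. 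This gives, for each fixed $h$ with $d(o,h o)$ small,
\[
\int_B |f(hz)-f(z)|^2\,d\mu_g(z)\le \ell^2\int_{B_g(p,2r)}\|\nabla f\|^2 .
\]
The role of connectedness of $K$ is to ensure $G/K$ is orientable and that the pushforward of Haar measure on $G$ gives $\mu_g$ up to a constant, with the fibers $K$ connected so lifts exist continuously.

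\textbf{Averaging and the doubling constant.} Next we average the above over $h$ in the set $H=\{h\in G:\ ho\in B_g(o,2r)\}$. Because of $G$-invariance, for $z\in B$ the points $hz$ with $h$ ranging over a translate cover $B$. The measure of $H$ in Haar terms is proportional to $\mu_g(B_g(o,2r))\le D_g\,\mu_g(B)$, and this is exactly where $D_g$ enters. Combining with the reduction step and $\ell\le 2r$ yields a constant of the form $\tfrac12\cdot 4r^2\cdot D_g=2D_gr^2$.

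\textbf{Main obstacle.} The delicate step is the change of variables: expressing $\int_B\int_B|f(x)-f(y)|^2$ as an average over $h\in G$ of $\int_B|f(hz)-f(z)|^2$. This works only for the pairs with $y=hx$, and since $G$ does not act simply transitively, translation is not by a point of $G/K$. I would handle this by passing to $G$ entirely. Set $\tilde f=f\circ\pi$ and use the right-$K$-invariant, left-$G$-invariant metric. Then balls upstairs project onto balls with Haar measure proportional to $\mu_g$, and $\|\nabla\tilde f\|=\|\nabla f\|\circ\pi$. We then run the group argument of \cite{EGS18} on $G$ with its left-invariant (degenerate-in-$\fr k$ limit) structure, and finally take the $K$-direction metric to zero.
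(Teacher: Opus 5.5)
\textbf{Verdict: genuine gap.} The averaging argument in the body of your proposal fails. Your last paragraph switches to the paper's strategy but only names it; the limiting argument that makes it a proof, and the real role of connectedness of $K$, are missing.

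\textbf{Where the averaging argument fails.} For fixed $h$ you claim
$\int_B|f(hz)-f(z)|^2\,d\mu_g(z)\le \ell^2\int_{B_g(p,2r)}\|\nabla_g f\|_g^2\,d\mu_g$. It is true that $\tau_{c(s)}$ is an isometry preserving $\mu_g$. But for $z=\pi(x)$ the curve $s\mapsto c(s)z$ has $g$-speed $\|(\op{Ad}_{x^{-1}}X(s))_{\fr m}\|_g$, where $X(s)=c(s)^{-1}\dot c(s)$. Since $\op{Ad}_{x^{-1}}$ does not preserve $\fr m$ or its $g$-eigenspaces, it can turn a cheap direction into an expensive one. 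So this speed is not bounded by $\|X(s)\|_g\le 1$ unless $g$ is normal. Equivalently, $d_g(z,hz)$ can be much larger than $d_g(o,ho)$, and $c(s)z$ need not stay in $B_g(p,2r)$. The group argument you are imitating uses right translations, which commute with left-invariant fields and preserve Haar measure by unimodularity. $G/K$ has no right $G$-action to play that role.

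\textbf{The correct route is only named.} In your last paragraph you lift $F=f\circ\pi$ to $G$ with a left-invariant, $\op{Ad}_K$-invariant metric $\bar g_\epsilon=g|_{\fr m}+\epsilon^2Q|_{\fr k}$, apply the unimodular Poincar\'e inequality there, and let $\epsilon\to0$. That is exactly the paper's strategy, but you give it one sentence. The group inequality has constant $2r^2\mu_{\bar g_\epsilon}(B_{\bar g_\epsilon}(e,2r))/\mu_{\bar g_\epsilon}(B_{\bar g_\epsilon}(e,r))$, which is a doubling ratio on $(G,\bar g_\epsilon)$, not $D_g$. Balls upstairs are not preimages of balls downstairs; your phrase ``balls upstairs project onto balls with Haar measure proportional to $\mu_g$'' blurs this. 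So you must show that, after dividing by the fiber volume $v_\epsilon$, the upstairs ball volumes, variances and gradient energies converge to their counterparts on $G/K$. The paper obtains this from the sandwich
\[ \pi^{-1}(B_g(o,r-\lambda_\epsilon))\subseteq B_{\bar g_\epsilon}(e,r)\subseteq \pi^{-1}(B_g(o,r)),\qquad \lambda_\epsilon=\epsilon\,\op{diam}(K,Q|_{\fr k\times\fr k}). \]
The left inclusion comes from horizontally lifting a minimizing geodesic and then moving a distance at most $\lambda_\epsilon$ inside the fiber. The sandwich is combined with the Fubini formula for a Riemannian submersion whose fibers all have the same volume, and with continuity of $\mu_g$ from below.

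\textbf{Role of connectedness.} The sandwich is where connectedness of $K$ is actually used: it makes the intrinsic fiber diameter finite, so $\lambda_\epsilon\to0$. The roles you assign to it (orientability, existence of lifts, the Haar pushforward being proportional to $\mu_g$) are either irrelevant or hold for every closed $K$. For disconnected $K$, the other components of a fiber stay at $\bar g_\epsilon$-distance bounded below from $e$, so the left inclusion fails. The argument then only controls the ball-volume ratio of the cover $G/K_0$, not of $G/K$.
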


As an immediate corollary, we obtain the following.

\begin{corol}\label{PoincareCorol}Assume that $G$ is compact, connected and $K$ is a connected closed subgroup of $G$. If the space $G/K$ is uniformly doubling on some set of $G$-invariant metrics, then there exists a positive constant $D$ such that the Poincar\'{e} inequality on $G/K$,

\begin{equation}\label{PoincareInequality1}
 \int_{B_g(p,r)}{|f-f_{B_g(p,r)}|^2d\mu_g}\leq 2Dr^2\int_{B_g(p,2r)}{\|\nabla_g{f}\|_g^2d\mu_g}, \end{equation}

\noindent holds uniformly for those metrics. In particular, it holds uniformly for all metrics of the form \eqref{MetricParameterForm} on generalized Wallach spaces and $\mathbb Z_2\times \mathbb Z_2$-symmetric spaces $G/K$ with $G$ compact simple and $K$ connected. It also holds uniformly for all $G$-invariant metrics on the Stiefel manifolds $SO(n)/SO(n-2)$, $n\geq 3$, $n\neq 4$, the Lie group $SU(2)$, and any of the spaces in Table \ref{table}.\end{corol}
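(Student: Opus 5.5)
The plan is to combine Theorem \ref{PoincareTheorem} with the hypothesis of uniform doubling. Then I will check that each listed space satisfies that hypothesis, using Theorem \ref{TheoremWallach}.

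\textbf{Step 1 (general statement).} Let $\mathcal{M}$ be a set of $G$-invariant metrics on $G/K$ on which $G/K$ is uniformly doubling. By definition there is a constant $D$ with $D_g\leq D$ for all $g\in\mathcal{M}$. Here $G$ is compact and connected and $K$ is closed and connected, so Theorem \ref{PoincareTheorem} applies to every $g\in\mathcal{M}$. It gives \eqref{PoincareInequality} with constant $2D_g r^2$. Since the right-hand side is nonnegative, replacing $D_g$ by the larger constant $D$ preserves the inequality, and we obtain \eqref{PoincareInequality1}. The constant $D$ does not depend on $g$, $p$, $r$ or $f$.

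\textbf{Step 2 (the listed spaces).} For generalized Wallach spaces and for $\mathbb Z_2\times\mathbb Z_2$-symmetric spaces with $G$ compact simple and $K$ connected, the first part of Theorem \ref{TheoremWallach} gives uniform doubling on the family $\alpha_I$. Step 1 then yields the uniform Poincaré inequality on this family. For the Stiefel manifolds $SO(n)/SO(n-2)$ with $n\geq 3$, $n\neq 4$, for $SU(2)$, and for the spaces in Table \ref{table}, the second part of Theorem \ref{TheoremWallach} says that every $G$-invariant metric is isometric to some $\alpha_I$. The Poincaré inequality is preserved under an isometry $\phi$: it maps balls to balls, preserves the measure and the gradient norms, and transforms $f$ into $f\circ\phi^{-1}$. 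The doubling constant is also an isometry invariant. Hence uniform doubling, and with it the uniform inequality, extends to all $G$-invariant metrics.

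\textbf{Step 3 (the obstacle).} Theorem \ref{PoincareTheorem} requires $K$ to be connected, so for each listed space we must verify this.
\begin{itemize}
\item $SU(2)$: here $K$ is trivial.
\item $SO(n)/SO(n-2)$: the subgroup $SO(n-2)$ is connected.
\item Spaces in Table \ref{table}: for each entry I would read off the isotropy group and check connectedness. If some entry presents $K$ with finitely many components, I would replace it by an almost effective presentation with connected isotropy $K_0$. Then I would note that the corollary's hypothesis "$K$ connected" restricts the claim to such presentations.
\end{itemize}
Beyond these connectedness checks, the argument is a direct combination of the results already stated.
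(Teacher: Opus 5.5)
Your proposal is correct and follows the paper's argument: apply Theorem \ref{PoincareTheorem} to each metric, bound $D_g$ by the uniform doubling constant $D$, and invoke Theorem \ref{TheoremWallach} for the listed spaces, with the connectedness of $K$ checked separately. The paper simply observes that the isotropy groups in Table \ref{table} are connected, so your fallback of passing to $K_0$ is never needed; it would not have worked anyway, since it changes the space and so proves nothing about the original presentation.
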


\subsection{Methodology and organization of the paper}
In Section \ref{sectionPreliminaries}, we state some preliminary facts about invariant metrics on homogeneous spaces and local coordinate functions. We also set our notation for the rest of the paper, and we prove two structural results that will be useful throughout the paper, lemmas \ref{generator} and \ref{LemmaRelaxed}.   

In Section \ref{SectionRicciLower}, we obtain a lower bound for the Ricci tensor that holds for all diagonal metrics with respect to a fixed decomposition on compact homogeneous spaces (Proposition \ref{RicciBoundProp}).

 After relabeling the subspaces $\fr{m}_i$ so that $\alpha_1\leq \alpha_2\leq \alpha_3$, the volume growth bounds in Theorem \ref{VolumeGrowth} are obtained by dividing the radius of the ball $B_g(o,r)$ into the three distinct intervals, $\Delta_1:=\big(0,\frac{\alpha_1\alpha_2}{\alpha_3}\big]$, $\Delta_2:=\big[\frac{\alpha_1\alpha_2}{\alpha_3}, \alpha_1\big]$ and $\Delta_3:=\big[\alpha_1,\alpha_2\big]$. 

In Section \ref{SectionEuclidean}, we treat the volume growth on $\Delta_1$. On this interval, the volume growth is Euclidean, i.e., comparable to $r^n$, and we prove this using Bishop-Gromov comparison. The boundary $r=\frac{\alpha_1\alpha_2}{\alpha_3}$ arises from the Ricci bound of Section \ref{SectionRicciLower}.  

In Section \ref{SectionSubRiemannian}, we treat the volume growth on $\Delta_2$. On this interval, after the threshold $r=\frac{\alpha_1\alpha_2}{\alpha_3}$, we have $\frac{r}{\alpha_1}\frac{r}{\alpha_2}\geq \frac{r}{\alpha_3}$. Intuitively, movement via directions in $[\fr{m}_1,\fr{m}_2]$ becomes more effective than movement via directions in $\fr{m}_3$. Thus the growth of a ball is comparable to the volume growth in the corresponding sub-Riemannian structure on $G/K$ with horizontal distribution $\fr{m}_1\oplus \fr{m}_2$. Since $[\fr{m}_1,\fr{m}_2]=\fr{m}_3$, the homogeneous dimension of this structure is $d_1+d_2+2d_3$. In turn, the growth is comparable to $r^{d_1+d_2+2d_3}=r^{n+d_3}$. We prove the corresponding lower bound of the volume growth by constructing a suitable $\mathcal{C}^1$ coordinate function via projection of group commutator elements $\exp{(-sX)}\exp{(-tY)}\exp{(sX)}\exp{(tY)}$, generated by directions  $X\in \fr{m}_1$ and $Y\in \fr{m}_2$ (Section \ref{HeisenbergLowerSection}). In fact, we generalize a similar construction in \cite{EGS18}. For the upper bound, we consider a local lift $\gamma$ of a minimizing curve to the group $G$, and then use Chen–Strichartz expansion for the logarithmic path-signature (\cite{Che57}, \cite{Str87}) to estimate the endpoint $\gamma(1)$. Unlike the case for Lie groups, a difficulty in the homogeneous case is that the left Darboux derivative $\gamma^{-1}\dot{\gamma}$ of $\gamma$ has an uncontrolled component in the Lie algebra $\fr{k}$ of $K$. To bypass this difficulty, we prove an auxiliary estimate which controls this component by the $\fr{m}$-component of $\gamma^{-1}\dot{\gamma}$ (Lemma \ref{KcomponentBound}). 

In Section \ref{PostSubRiemannian}, we treat the volume growth on $\Delta_3$. On this interval, after the threshold $r=\alpha_1$, the directions in $\fr{m}_1$ no longer contribute to the volume growth.  Thus the latter is comparable to the growth of the base space $G/H$, through a suitable Riemannian submersion $G/K\rightarrow G/H$ whose vertical space is $\fr{m}_1$. In turn, the volume growth is comparable to $r^{\dim{G/H}}=r^{n-d_1}$. More generally, we relate the volume growth of homogeneous spaces via Riemannian submersions (Proposition \ref{FibrationProp}). 

In Section \ref{MainProofsSection}, we combine the volume bounds for the three intervals to prove the main results, theorems \ref{VolumeGrowth} and \ref{maintheorem}.

In Section \ref{Z2WallachApplications}, we discuss the classes of $\mathbb Z_2\times \mathbb Z_2$-symmetric spaces and generalized Wallach spaces, and we prove Theorem \ref{TheoremWallach}. 

Finally, in Section \ref{PoincareProofSection}, we prove Theorem \ref{PoincareTheorem}. Note that there exist  Poincar\'{e} inequalities for homogeneous spaces $G/K$ (\cite{Ma98}, \cite{SC02}), but they rely on doubling constants for left-invariant metrics on the group $G$, which our results do not provide. To recover the doubling constant of $G/K$, for each $G$-invariant metric $g$ on $G/K$, we construct a family of left-invariant metrics $\bar{g}_{\epsilon}$ on $G$ such that the projection $\pi:(G,\bar{g}_{\epsilon})\rightarrow(G/K,g)$ is a Riemannian submersion and the fibers collapse as $\epsilon \rightarrow 0$. Our result then follows by taking the corresponding limit in the Poincar\'{e} inequality for $G$.

\begin{table}[ht]
\centering
\small
\renewcommand{\arraystretch}{1.18}
\setlength{\tabcolsep}{8pt}

\begin{tabular}{|p{10.2cm}|p{3.5cm}|}
\hline
\centering $G/K$ & \centering\arraybackslash Parameters \\ 
\hline\hline

$M_1\times M_2\times M_3$ &
$M_i$ irreducible compact symmetric spaces \\
\hline

$SO(k+l+m)/(SO(k)\times SO(l)\times SO(m))$ &
$k,l,m\geq 1$; $(k,l,m)\neq(q,1,1)$ up to permutation \\
\hline

$SU(k+l+m)/S(U(k)\times U(l)\times U(m))$ &
$k,l,m\geq 1$ \\

$Sp(k+l+m)/(Sp(k)\times Sp(l)\times Sp(m))$ &
$k,l,m\geq 1$ \\
\hline

$SU(2l)/U(l)$ &
$l\geq 2$ \\

$SO(2l)/(U(1)\times U(l-1))$ &
$l\geq 4$ \\
\hline

$E_6/(SU(4)\times Sp(1)^2\times U(1))$ & \\
$E_6/(Spin(8)\times U(1)^2)$ & \\
$E_6/(Sp(3)\times Sp(1))$ & \\
\hline

$E_7/(Spin(8)\times Sp(1)^3)$ & \\
$E_7/(SU(6)\times Sp(1)\times U(1))$ & \\
$E_7/Spin(8)$ & \\
\hline

$E_8/(Spin(12)\times Sp(1)^2)$ & \\
$E_8/(Spin(8)\times Spin(8))$ & \\
\hline

$F_4/(Spin(5)\times Sp(1)^2)$ & \\
$F_4/Spin(8)$ & \\
\hline
\end{tabular}

\caption{Generalized Wallach spaces with pairwise inequivalent
isotropy summands.}
\label{table}
\end{table}

\section{Preliminaries and notation}\label{sectionPreliminaries}

\subsection{Homogeneous spaces and $G$-invariant metrics}\label{Prel1}

Let $G$ be a compact, connected Lie group and denote by $\fr{g}$ its Lie algebra. For subspaces $V,W$ of $\fr{g}$, we denote by $[V,W]$ the linear span of vectors $[v,w]$ with $v\in V, w\in W$. Since $G$ is compact, we may choose an $\op{Ad}$-invariant inner product $Q$ on $\fr{g}$. We fix this product throughout the rest of the paper. Denote by $\|\cdot \|$ the norm on $\fr{g}$ induced by $Q$, and let 

\[ C_{\fr{g}}:=\|[ \cdot, \cdot ]\|_{op}=\sup_{\|X\|=\|Y\|=1}\|[X,Y]\|,\]

\noindent be the operator norm of the Lie bracket on $\fr{g}$, so that

\begin{equation}\label{Cg} \| [X,Y]\|\leq C_{\fr{g}}\|X\| \|Y\| \ \ \makebox{for all} \ \ X,Y\in \fr{g}. \end{equation}   

Let $K$ be a closed subgroup of $G$ and consider the homogeneous space $G/K$ with origin $o:=eK$. Denote by $\fr{k}$ the Lie algebra of $K$, and consider the $Q$-orthogonal \emph{reductive decomposition}

\[ \fr{g}=\fr{k}\oplus \fr{m}. \]

\noindent Then $\fr{m}$ is $\op{Ad}_K$-invariant, and we can canonically identify $\fr{m}$ with $T_o(G/K)$ via the differential $d\pi_e$ of the canonical projection $\pi:G\rightarrow G/K$.  For $x\in G$, let $L_x,R_x:G\rightarrow G$ denote the left and right translations in $G$ respectively, and let $\tau_x:G/K\rightarrow G/K$, $yK\mapsto (xy)K$ denote the left translation in $G/K$, so that $\tau_x\circ \pi=\pi\circ L_x$. A Riemannian metric $g$ on $G/K$ is called \emph{$G$-invariant} if the left translations $\tau_x$, $x\in G$, are isometries of $(G/K,g)$. Then $(G/K,g)$ is called a \emph{Riemannian homogeneous space}.

 The $G$-invariant Riemannian metrics on $G/K$ are in bijection with $\op{Ad}_K$-invariant inner products on $\fr{m}$. In the sequel, we will make no distinction between a $G$-invariant metric and the corresponding inner product on $\fr{m}$.  

\subsection{Invariant measures and volume functions}\label{Prel2}
 
For a Riemannian manifold $(M,g)$, denote by $B_g(p,r)$ the open ball of radius $r$ centered at $p\in M$, with respect to the distance $d_g$ induced from the metric $g$. Let $\mu_g$ denote the corresponding Riemannian volume measure.
 
 We turn to homogeneous spaces $G/K$. Note that for any $G$-invariant metric $g$ on $G/K$, the left-translation invariance and the triangle inequality imply

 \begin{equation}\label{triangle} d_g(o, \pi(x_1x_2\cdots x_l))\leq \sum_{i=1}^l{d_g(o,\pi(x_i))}, \end{equation}
 
\noindent for all $x_1,\dots,x_l\in G$.  We set 
  \[  Vol_g(r):=\mu_g(B_g(o,r)), \] 

\noindent  For simplicity, we denote by $\mu_0$ the $G$-invariant Riemannian volume measure on $G/K$ induced from the normal homogeneous metric  $\left.Q\right|_{\fr{m}\times \fr{m}}$, which will be our reference metric. For the rest of the paper, unless otherwise stated, assume that $G/K$ satisfies Assumption \ref{Assumption}, and thus is endowed with the $G$-invariant metrics $\alpha_I$ given by relation \eqref{MetricParameterForm}, with $\alpha_1\leq \alpha_2\leq \alpha_3$, up to a possible relabeling of the submodules $\fr{m}_i$. If the $\op{Ad}_K$-submodules $\fr{m}_i$ are irreducible and pairwise inequivalent, Schur's lemma implies that the metrics $\alpha_I$ exhaust the $G$-invariant metrics on $G/K$. We set 
\[ \Lambda_{\alpha_I}:=\alpha_1^{d_1}\alpha_2^{d_2}\alpha_3^{d_3},\]
 
\noindent so that

\begin{equation}\label{measure}\mu_{\alpha_I}=\Lambda_{\alpha_I} \mu_0.\end{equation}
  
\noindent Moreover, for a $G$-invariant metric $g$ we set  

\[ V_g(r):=\mu_0(B_g(o,r)),\]
 
 \noindent so that Equation \eqref{measure} yields

\begin{equation*}Vol_{\alpha_I}(r)=\Lambda_{\alpha_I}V_{\alpha_I}(r).\end{equation*}

\subsection{The Jacobian of a coordinate map}\label{Jacobian} Denote by $m$ the Lebesgue measure on $\fr{m}$, induced by the inner product $\left.Q\right|_{\fr{m}\times \fr{m}}$.  Let $U$ be an open set of $\fr{m}$ and let $F:U\rightarrow G/K$ be a diffeomorphism onto its image. Following \cite{EGS18}, we define the Jacobian of $F$ as the continuous function $J_F:U\rightarrow (0,+\infty)$ such that

\[ \mu_0(F(S))=\int_S{J_F dm},\]
 
\noindent for any measurable $S\subseteq U$.

\subsection{Local coordinate functions on $G/K$}\label{sectioncoordinate}
 Consider a $n$-dimensional homogeneous space $G/K$ with the $Q$-orthogonal reductive decomposition

\begin{equation}\label{reductive123} \fr{g}=\fr{k}\oplus\fr{m}_1\oplus \cdots \oplus{\fr{m}_s}.\end{equation}

\noindent We set 

\[ d_i:=\dim{\fr{m}_i}, \ \ i=1,\dots,s, \]

\noindent so that $n=d_1+\cdots +d_s$.  Fix a $Q$-orthonormal basis of $\fr{m}$,

\[ \{E_i^j: i=1,\dots,s, \ j=1,\dots d_i\}, \]

\noindent  adapted to the decomposition \eqref{reductive123}. This basis will remain fixed throughout the rest of the paper. For the spaces satisfying Assumption \ref{Assumption}, it will be adapted to the $Q$-orthogonal decomposition $\fr{m}=\fr{m}_1\oplus \fr{m}_2\oplus \fr{m}_3$.  We identify $\fr{m}$ with $\mathbb R^n$ when it is convenient. We will henceforth use the notation $X=(x_i^{j})_{j=1}^{d_i}$ for any vector $X=\sum_{j=1}^{d_i}{x_i^{j}E_i^{j}}\in \fr{m}_i$, $i=1,\dots,s$. 
 
 We consider two kinds of local coordinate functions on $G/K$. The exponential coordinates of mixed kind, defined by $\Phi:\fr{m}\rightarrow G/K$, with 

\[ \Phi(X_1,\dots,X_s)=\pi\big(\exp{X_1}\cdots \exp{X_s}\big), \ \ X_i\in \fr{m}_i,\ \ i=1,\dots,s,\]

\noindent and the standard exponential coordinates $\Psi:\fr{m}\rightarrow G/K$, with

\[ \Psi(X)=\pi(\exp{X}), \  \  X\in \fr{m}.\]

\noindent Both functions $\Phi$ and $\Psi$ define local coordinates around $o=\Phi(0)=\Psi(0)$, since $d\Phi_0=d\Psi_0=\op{Id}_{\fr{m}}$, under the identification $\fr{m}=T_o(G/K)$ via $d\pi_e$. By the inverse function theorem, there exists a neighborhood $U$ of $0$ in $\fr{m}$ such that both maps $\left.\Phi\right|_U:U\rightarrow \Phi(U)$ and $\left.\Psi\right|_U:U\rightarrow \Psi(U)$ are diffeomorphisms.  Hence by choosing $\epsilon_0>0$ such that $[-\epsilon_0,\epsilon_0]^n\subset U$, the continuity of the Jacobians $J_{\Phi}$ and $J_{\Psi}$ yields the following.  

  \begin{lemma}\label{boundedcoordinates}There exist positive constants $\epsilon_0$ and $C_0$, depending only on the space $G/K$ and the fixed product $Q$, such that $\mu_0(\Phi(S))\geq C_0m(S)$ and $\mu_0(\Psi(S))\geq C_0m(S)$ for any measurable $S\subseteq [-\epsilon_0,\epsilon_0]^n$, where $m$ is the Lebesgue measure on $\fr{m}$.  \end{lemma}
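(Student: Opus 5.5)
The plan is to deduce Lemma \ref{boundedcoordinates} directly from the inverse function theorem and continuity of the Jacobians, exactly as sketched before the statement. First I would check that $d\Phi_0=d\Psi_0=\op{Id}_{\fr{m}}$ under the identification $\fr{m}\cong T_o(G/K)$ via $d\pi_e$. For $\Psi$ this is $d\pi_e\circ d\exp_0|_{\fr{m}}=d\pi_e|_{\fr{m}}$. For $\Phi$, the differential of $(X_1,\dots,X_s)\mapsto \exp X_1\cdots\exp X_s$ at $0$ is $(X_1,\dots,X_s)\mapsto X_1+\cdots+X_s$, by the product rule in $G$. Both maps are smooth (real analytic), so the inverse function theorem gives an open neighborhood $U$ of $0$ in $\fr{m}$ on which $\Phi|_U$ and $\Psi|_U$ are diffeomorphisms onto open sets. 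I then fix $\epsilon_0>0$ with $[-\epsilon_0,\epsilon_0]^n\subset U$, where coordinates are taken with respect to the fixed orthonormal basis $\{E_i^j\}$.

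Second, I would identify the Jacobian concretely. Since $\mu_0$ is the Riemannian measure of $Q|_{\fr{m}\times\fr{m}}$ and $m$ is Lebesgue measure for the same inner product, the change of variables formula gives $J_F(X)=|\det dF_X|$. Here the determinant is computed using orthonormal bases: $\{E_i^j\}$ in $\fr{m}$, and a $\mu_0$-orthonormal basis of $T_{F(X)}(G/K)$. Concretely, $J_F(X)=\sqrt{\det(g_0(\partial_a F,\partial_b F))}$ with $g_0$ the normal metric. This is a continuous function of $X$, and it is strictly positive on $U$ because $F|_U$ is a diffeomorphism.

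Third, the cube $[-\epsilon_0,\epsilon_0]^n$ is compact, so $J_\Phi$ and $J_\Psi$ attain positive minima on it. Let $C_0$ be the smaller of the two minima. For measurable $S\subseteq[-\epsilon_0,\epsilon_0]^n$ we then get $\mu_0(\Phi(S))=\int_S J_\Phi\,dm\geq C_0 m(S)$, and likewise for $\Psi$. The constants depend only on $G/K$, $Q$ and the fixed decomposition and basis, not on the metric $\alpha_I$, since only $\mu_0$ and $m$ enter.

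There is no substantial obstacle here. The only point needing care is that injectivity of $F$ on $S$ is required for the integral formula; this holds because $S\subset U$ and $F|_U$ is injective. The rest is the routine verification that $d\Phi_0$ is the identity.
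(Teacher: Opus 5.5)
Your proposal is correct and follows the paper's argument. The paper likewise uses $d\Phi_0=d\Psi_0=\op{Id}_{\fr{m}}$ and the inverse function theorem to get a common neighborhood $U$ on which both maps are diffeomorphisms, takes $[-\epsilon_0,\epsilon_0]^n\subset U$, and concludes from the continuity and positivity of $J_\Phi$ and $J_\Psi$ on this compact cube. Your explicit Jacobian formula and your remark on injectivity just fill in details the paper leaves implicit.
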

 
\noindent An advantage of the exponential coordinates of any kind is the fact that

\begin{equation}\label{triangle2}d_g(o,\pi(\exp{X}))\leq \sqrt{g(d\pi_e(X),d\pi_e(X))},\ \ \makebox{for all}\ \ X\in \fr{g}. \end{equation}

\subsection{Two structural lemmas for homogeneous spaces}\label{structural} We close the preliminary section with two structural facts for homogeneous spaces, concerning mostly the spaces $G/K$ of Assumption \ref{Assumption}.

\begin{lemma}\label{generator}Let $G/K$ be a homogeneous space satisfying Assumption \ref{Assumption}.  Set $N:=d_1d_2$ and consider the $Q$-orthonormal basis $\{E_i^j: i=1,2,3, \ j=1,\dots , d_i\}$, adapted to the decomposition $\fr{m}=\fr{m}_1\oplus \fr{m}_2\oplus \fr{m}_3$. There exist vectors $A^k_l\in \fr{m}_1$, $B^k_l\in \fr{m}_2$, $k=1,\dots,d_3$, $l=1,\dots, N$, such that 
 
 \begin{equation}\label{generatoreq}E_3^k=\sum_{l=1}^N{[A^k_l,B^k_l]}, \ \ k=1,\dots, d_3.  \end{equation}
 \end{lemma}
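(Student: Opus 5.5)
The statement follows from the bracket condition $[\fr{m}_1,\fr{m}_2]=\fr{m}_3$ in Assumption \ref{Assumption} and bilinearity of the bracket. By the convention of Section \ref{Prel1}, $[\fr{m}_1,\fr{m}_2]$ denotes the linear span of all brackets $[X,Y]$ with $X\in\fr{m}_1$, $Y\in\fr{m}_2$. The plan is to show that this span is already the span of the $N=d_1d_2$ brackets of basis vectors. We then write each $E_3^k$ as a linear combination of these brackets and absorb the coefficients into the first argument.

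First, expand arbitrary $X=\sum_{i}x_1^iE_1^i\in\fr{m}_1$ and $Y=\sum_j x_2^jE_2^j\in\fr{m}_2$. By bilinearity,
\[ [X,Y]=\sum_{i=1}^{d_1}\sum_{j=1}^{d_2}x_1^ix_2^j[E_1^i,E_2^j]. \]
Hence every element of $[\fr{m}_1,\fr{m}_2]$, being a finite linear combination of such brackets, lies in $\op{span}\{[E_1^i,E_2^j]: 1\le i\le d_1,\,1\le j\le d_2\}$. This family has exactly $N=d_1d_2$ elements, and the reverse inclusion is trivial. By \eqref{cyclicbracket} the span therefore equals $\fr{m}_3$.

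Second, for each $k=1,\dots,d_3$, since $E_3^k\in\fr{m}_3$ there are real coefficients $c^k_{ij}$ with
\[ E_3^k=\sum_{i,j}c^k_{ij}[E_1^i,E_2^j]. \]
Enumerate the pairs $(i,j)$ by $l=1,\dots,N$, say $l=(i-1)d_2+j$. Set $A^k_l:=c^k_{ij}E_1^i\in\fr{m}_1$ and $B^k_l:=E_2^j\in\fr{m}_2$. Bilinearity gives $[A^k_l,B^k_l]=c^k_{ij}[E_1^i,E_2^j]$, and summing over $l$ yields \eqref{generatoreq}.

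There is no real obstacle here. The only point needing care is the interpretation of $[\fr{m}_1,\fr{m}_2]=\fr{m}_3$ as an equality of linear spans rather than of sets of brackets. That is precisely what bilinearity reduces to a finite family of $N$ basis brackets, which is why the number of terms can be taken to be $N$ uniformly in $k$.
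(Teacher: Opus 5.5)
Your proof is correct and follows essentially the same route as the paper: expand each $E_3^k$ in the $N=d_1d_2$ basis brackets $[E_1^i,E_2^j]$, which span $[\fr{m}_1,\fr{m}_2]=\fr{m}_3$, and read this as a sum of $N$ brackets. You are slightly more explicit than the paper on two points: you use bilinearity to show the span reduces to these $N$ brackets, and you absorb the coefficients $c^k_{ij}$ into $A^k_l$.
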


  \begin{proof}  Since $\fr{m}_3=[\fr{m}_1,\fr{m}_2]$, we can write
  
  \[ E_3^k=\sum_{i=1}^{d_1}{\sum_{j=1}^{d_2}{c_{ij}^k[E_1^i,E_2^j]}}, \ \ k=1,\dots, d_3, \] 

  \noindent for suitable constants $c^k_{ij}$. In other words, each $E_3^k$, $k=1,\dots,d_3$, is written as the sum of $N$ Lie brackets, which yields relation \eqref{generatoreq}.  \end{proof}

We will also use the following lemma.

\begin{lemma}\label{LemmaRelaxed} Let $G/K$ be a homogeneous space, admitting an orthogonal reductive decomposition

   \begin{equation*} \fr{g}=\fr{k}\oplus \fr{m}_1\oplus \fr{m}_2\oplus \fr{m}_3, \end{equation*}

\noindent with respect to an $\op{Ad}$-invariant inner product $Q$ on $\fr{g}$, such that the spaces $\fr{m}_i$ are $\op{Ad}_K$-invariant and 

   \begin{equation}\label{RELCOND}[\fr{m}_i,\fr{m}_j]\subseteq\fr{m}_k,  \ \ \makebox{$i,j,k$ pairwise distinct}. \end{equation}

\noindent Then the following are true:\\

\noindent 1. If $[\fr{m}_i,\fr{m}_j]=\fr{m}_k$, for $i,j,k$ pairwise distinct, then for each $i=1,2,3$, it holds

\[ [\fr{m}_i,\fr{m}_i]\subseteq \fr{k}, \]

and thus the space $\fr{h}_i:=\fr{k}\oplus \fr{m}_i$ is a Lie subalgebra of $\fr{g}$.\\

\noindent 2. The group 

\[ N_G(\fr{h}_i)=\{g\in G:\op{Ad}_g\fr{h}_i\subseteq \fr{h}_i\} \]

\noindent is a closed subgroup of $G$ that contains $K$. Moreover, if $[\fr{m}_i,\fr{m}_j]=\fr{m}_k$, for $i,j,k$ pairwise distinct, then the Lie algebra of $N_G(\fr{h}_i)$ is $\fr{h}_i$.\\

\noindent 3. If the $\op{Ad}_K$-submodules $\fr{m}_i$ are irreducible, then either 

\begin{eqnarray*}[\fr{m}_i,\fr{m}_j]&=&\{0\}, \ \ \makebox{for all $i\neq j$, or} \\ \\  
{}[\fr{m}_i,\fr{m}_j]&=&\fr{m}_k, \ \ \makebox{for all $i,j,k$ pairwise distinct}. \end{eqnarray*}
 
   \end{lemma}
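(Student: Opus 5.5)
The plan is to use only $\op{Ad}$-invariance of $Q$, i.e. $Q([X,Y],Z)=-Q(Y,[X,Z])$, together with the orthogonal decomposition, and to treat the three parts in order.

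\emph{Part 1.} Fix $i$ and let $j,k$ be the other two indices. For $X,Y\in\fr{m}_i$ we show that $[X,Y]$ is $Q$-orthogonal to $\fr{m}_i$, $\fr{m}_j$ and $\fr{m}_k$; then $[X,Y]\in\fr{k}$ by orthogonality of the decomposition. For $Z\in\fr{m}_j$ we have $Q([X,Y],Z)=-Q(Y,[X,Z])$. Since $[X,Z]\in\fr{m}_k$ by \eqref{RELCOND} and $Y\in\fr{m}_i\perp\fr{m}_k$, this vanishes. The case $Z\in\fr{m}_k$ is symmetric. For $Z\in\fr{m}_i$, the hypothesis $\fr{m}_i=[\fr{m}_j,\fr{m}_k]$ lets us write $Z$ as a sum of brackets $[U,V]$ with $U\in\fr{m}_j$, $V\in\fr{m}_k$. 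Then
\[ Q([X,Y],[U,V])=-Q([U,[X,Y]],V)=Q([[X,Y],U],V). \]
By the Jacobi identity, $[[X,Y],U]=[X,[Y,U]]-[Y,[X,U]]$, and both terms lie in $[\fr{m}_i,\fr{m}_k]\subseteq\fr{m}_j\perp V$. Hence this pairing vanishes too. Finally, $[\fr{k},\fr{m}_i]\subseteq\fr{m}_i$ by $\op{Ad}_K$-invariance (differentiate, using that $K$ acts on $\fr{m}_i$), so $\fr{h}_i$ is a subalgebra.

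\emph{Part 2.} $N_G(\fr{h}_i)$ is defined by a closed condition on $\op{Ad}_g$, so it is closed. It contains $K$ because $\op{Ad}_K$ preserves both $\fr{k}$ and $\fr{m}_i$. Its Lie algebra is the normalizer $\fr{n}=\{X:[X,\fr{h}_i]\subseteq\fr{h}_i\}$, which contains $\fr{h}_i$ since $\fr{h}_i$ is a subalgebra. For the reverse inclusion, take $X=X_j+X_k+W$ with $X_j\in\fr{m}_j$, $X_k\in\fr{m}_k$, $W\in\fr{h}_i$. Then for every $Y\in\fr{m}_i$ we need $[X_j+X_k,Y]\in\fr{h}_i$. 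But $[X_j,Y]\in\fr{m}_k$ and $[X_k,Y]\in\fr{m}_j$, so both must vanish: $X_j$ commutes with $\fr{m}_i$. Using invariance, $Q(X_j,[\fr{m}_i,\fr{m}_k])=-Q([X_j,\fr{m}_i],\fr{m}_k)=0$, and $[\fr{m}_i,\fr{m}_k]=\fr{m}_j$. Thus $X_j=0$, and likewise $X_k=0$.

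\emph{Part 3.} The key step is to show that $[\fr{m}_i,\fr{m}_j]$ is $\op{Ad}_K$-invariant, since $\op{Ad}_k[X,Y]=[\op{Ad}_kX,\op{Ad}_kY]$. It is therefore a $K$-submodule of the irreducible module $\fr{m}_k$, so it is either $0$ or all of $\fr{m}_k$. Next we link the three pairs using the invariance identity
\[ Q([\fr{m}_i,\fr{m}_j],\fr{m}_k)=Q(\fr{m}_i,[\fr{m}_j,\fr{m}_k]) \]
(up to sign). Since $[\fr{m}_i,\fr{m}_j]\subseteq\fr{m}_k$, it is zero iff it is orthogonal to $\fr{m}_k$, iff $[\fr{m}_j,\fr{m}_k]\perp\fr{m}_i$, iff $[\fr{m}_j,\fr{m}_k]=0$. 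So all three brackets vanish simultaneously, or all are nonzero and hence equal to the full modules.

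The main obstacle is the Jacobi-identity step in Part 1, which handles the $\fr{m}_i$-component; everything else is a formal consequence of invariance.
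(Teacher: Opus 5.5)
Your proposal is correct and follows the paper's proof essentially step for step: $\op{Ad}$-invariance plus the Jacobi identity applied to brackets $[U,V]$ spanning $\fr{m}_i$ in Part 1, the normalizer computation that kills the $\fr{m}_j,\fr{m}_k$ components using $[\fr{m}_i,\fr{m}_k]=\fr{m}_j$ in Part 2, and irreducibility combined with $Q([\fr{m}_i,\fr{m}_j],\fr{m}_k)=\pm Q(\fr{m}_i,[\fr{m}_j,\fr{m}_k])$ in Part 3. One small slip: the sign in $Q(X_j,[\fr{m}_i,\fr{m}_k])=-Q([X_j,\fr{m}_i],\fr{m}_k)$ should be a plus, but this is immaterial because the right-hand side vanishes.
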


\begin{proof} Choose arbitrary $i=1,2,3$. We have $[\fr{k},\fr{k}]\subseteq \fr{k}$ and $[\fr{k},\fr{m_i}]\subseteq \fr{m}_i$ by the $\op{Ad}_K$-invariance of $\fr{m}_i$. Moreover, for $i,j,k$ pairwise distinct, the $\op{Ad}$-invariance of $Q$, along with the Lie brackets \eqref{RELCOND} of the spaces $\fr{m}_i$, yields 

\[ Q([\fr{m}_i,\fr{m}_i],\fr{m}_j\oplus \fr{m}_k)\subseteq Q(\fr{m}_i,[\fr{m}_i,\fr{m}_j\oplus \fr{m}_k])\subseteq Q(\fr{m}_i, \fr{m}_k\oplus \fr{m}_j)=\{0\}, \] 

\noindent and thus $[\fr{m}_i,\fr{m}_i]\subseteq \fr{k}\oplus \fr{m}_i$. It remains to show that $[\fr{m}_i,\fr{m}_i]$ is $Q$-orthogonal to $\fr{m}_i$. We will prove it for $i=3$, as the other cases are similar. To this end, since $\fr{m}_3=[\fr{m}_1,\fr{m}_2]$, we may write any $Z\in \fr{m}_3$ as 

\[ Z=\sum_{l=1}^{d_1d_2}{[A_l,B_l]},\]
 
 \noindent where $A_l\in \fr{m}_1$ and $B_l\in \fr{m}_2$ (c.f. Lemma \ref{generator}).  Then for any $X,Y\in \fr{m}_3$, the $\op{Ad}$-invariance of $Q$ and the Jacobi identity yield

 \begin{eqnarray*}Q([X,Y],Z)&=&\sum_{l=1}^{d_1d_2}{Q\big([X,Y],[A_l,B_l]\big)}=\sum_{l=1}^{d_1d_2}{Q\big(\big[[X,Y],A_l\big],B_l\big)}\\ \\ 
 &=&\sum_{l=1}^{d_1d_2}{\bigg(Q\big(\big[X,[Y,A_l]\big],B_l\big)-Q\big(\big[Y,[X,A_l]\big],B_l\big)\bigg)}.\end{eqnarray*}
 
 \noindent In view of relations \eqref{RELCOND}, the terms $\big[X,[Y,A_l]\big]$ and $\big[Y,[X,A_l]\big]$ lie in $\fr{m}_1$. Since $B_l\in \fr{m}_2$ and the submodules $\fr{m}_l$ are $Q$-orthogonal, the right hand side of the above equation vanishes, and thus $Q([X,Y],Z)=0$. Therefore, $[\fr{m}_3,\fr{m}_3]$ is $Q$-orthogonal to $\fr{m}_3$, which yields $[\fr{m}_3,\fr{m}_3]\subseteq \fr{k}$. The other two relations follow in an identical manner, concluding the proof of part 1.

We will prove part 2. for $i=1$, as the other cases are similar. The continuity of $\op{Ad}$ implies that $N_G(\fr{h}_1)$ is closed. Moreover, by the $\op{Ad}_K$-invariance of $\fr{m}_1$, we have $\op{Ad}_K\fr{h}_1\subseteq \fr{h}_1$, and thus $K\subseteq N_G(\fr{h}_1)$. Now the Lie algebra of $N_G(\fr{h}_1)$ is the normalizer

\[ \fr{n}_{\fr{g}}(\fr{h}_1)=\{X\in \fr{g}:[X,\fr{h}_1]\subseteq \fr{h}_1\}\] 

\noindent (see e.g. \cite{Ant12}). It is immediate that $\fr{h}_1\subseteq \fr{n}_{\fr{g}}(\fr{h}_1)$. Conversely, let $X\in \fr{n}_{\fr{g}}(\fr{h}_1)$. Then 

\[ X=X_{\fr{h}_1}+X_2+X_3, \ \ \makebox{with} \ \ X_{\fr{h}_1}\in \fr{h}_1,\ \ X_2\in \fr{m}_2,\ \  X_3\in \fr{m}_3.\]
 
 \noindent We will show that $X_2=X_3=0$, which will conclude the proof that $\fr{n}_{\fr{g}}(\fr{h}_1)=\fr{h}_1$, and in turn the proof of part 2. Indeed, since $[X,\fr{h}_1]\subseteq \fr{h}_1$, we have $[X,\fr{m}_1]\subseteq \fr{h}_1$, i.e., 

\[ [X_2,\fr{m}_1]+[X_3,\fr{m}_1]\subseteq \fr{h}_1.\]
  
  \noindent On the other hand, relations \eqref{RELCOND} imply that the left hand side of the above relation lies in $\fr{m}_3\oplus \fr{m}_2$, and thus $[X_2,\fr{m}_1]=[X_3,\fr{m}_1]=0$. Since $\fr{m}_2=[\fr{m}_1,\fr{m}_3]$, we obtain

 \[Q(X_2,\fr{m}_2)\subseteq Q(X_2,[\fr{m}_1,\fr{m}_3])\subseteq Q([X_2,\fr{m}_1],\fr{m}_3)=\{0\}.\]
\noindent Since $X_2\in \fr{m}_2$, the above relation yields $X_2=0$. Similarly, $X_3=0$, concluding the proof of part 2.

For part 3., choose $i,j,k=1,2,3$ pairwise distinct. Both spaces $\fr{m}_k$ and $[\fr{m}_i,\fr{m}_j]\subseteq \fr{m}_k$ are $\op{Ad}_K$-invariant. Hence the irreducibility of $\fr{m}_k$ implies that $[\fr{m}_i,\fr{m}_j]=\{0\}$ or $[\fr{m}_i,\fr{m}_j]=\fr{m}_k$. If the first case is true, say without any loss of generality that $[\fr{m}_1,\fr{m}_2]=\{0\}$, then 

\[ Q([\fr{m}_2,\fr{m}_3],\fr{m}_1)\subseteq Q(\fr{m}_3,[\fr{m}_1,\fr{m}_2])=\{0\}. \]

\noindent Therefore, $[\fr{m}_2,\fr{m}_3]$ is $Q$-orthogonal to $\fr{m}_1$ which, given that $[\fr{m}_2,\fr{m}_3]\subseteq \fr{m}_1$ yields $[\fr{m}_2,\fr{m}_3]=\{0\}$. Similarly, $[\fr{m}_1,\fr{m}_3]=\{0\}$. This completes the proof.\end{proof}

\section{A lower bound for the Ricci tensor on diagonal metrics}\label{SectionRicciLower}
In this section, we derive a lower bound for the Ricci tensor on compact homogeneous spaces with respect to diagonal metrics. Let $(G/K,g)$ be a $n$-dimensional Riemannian homogeneous space, $n\geq 2$, with $G$ compact, admitting the reductive decomposition $\fr{g}=\fr{k}\oplus \fr{m}$. Let $B$ denote the Killing form of $\fr{g}$, which is negative semidefinite (it is negative definite if $G$ is semisimple), and let $(E_i)_{i=1}^n$ be a $g$-orthonormal basis on $\fr{m}$.  Since $G$ is compact and hence unimodular, by \cite{Bes}, Corollary 7.38, the Ricci tensor of $g$ is given by

\begin{equation}\label{RicciTensorBesse}\op{Ric}_g(X,X)=-\frac{1}{2}\sum_{i=1}^n{\|[X,E_i]_{\fr{m}}\|_g^2}-\frac{1}{2}B(X,X)+\frac{1}{4}\sum_{i,j=1}^n{\big(g([E_i,E_j]_{\fr{m}},X)\big)^2}, \ \ X\in \fr{m}, \end{equation}

\noindent where the subscript $\fr{m}$ on the Lie bracket denotes the orthogonal projection on $\fr{m}$ with respect to the reductive decomposition, and $\|\cdot \|_g$ denotes the norm on $\fr{m}$ induced from the metric $g$.

We prove the following bound.

\begin{prop}\label{RicciBoundProp}Let $G/K$ be a homogeneous space with $G$ compact, let $Q$ be an $\op{Ad}$-invariant inner product on $\fr{g}$ and consider the $Q$-orthogonal reductive decomposition

\[ \fr{g}=\fr{k}\oplus \fr{m}. \]

\noindent Assume further that $\fr{m}$ admits the $Q$-orthogonal decomposition

\begin{equation}\label{SDecomposition} \fr{m}=\fr{m}_1\oplus \cdots \oplus \fr{m}_s, \end{equation}

\noindent into $\op{Ad}_K$-invariant subspaces $\fr{m}_i$. Endow $G/K$ with the $G$-invariant metric 

\[ g:=\alpha_1^2\left.Q\right|_{\fr{m}_1\times \fr{m}_1}+\cdots + \alpha_s^2\left.Q\right|_{\fr{m}_s\times \fr{m}_s}, \ \ \alpha_i>0. \]

\noindent We set
 \[A_g:=\max\{\frac{\alpha_k}{\alpha_i\alpha_j}: [\fr{m}_i,\fr{m}_j]_{\fr{m}_k}\neq \{0\}\}, \]

\noindent and $A_g:=0$ if $[\fr{m}_i,\fr{m}_j]_{\fr{m}_k}=\{0\}$ for all $i,j,k=1,\dots,s$. Here the subscript $\fr{m}_k$ on the Lie bracket denotes the orthogonal projection on $\fr{m}_k$ with respect to the decomposition \eqref{SDecomposition}.  Then there exists a positive constant $C_{ric}$, depending only on the space $G/K$, the product $Q$ and the decomposition \eqref{SDecomposition}, such that 

\[ \op{Ric}_g\geq -C_{ric}A_g^2g. \]
\end{prop}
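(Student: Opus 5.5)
We will use formula \eqref{RicciTensorBesse} directly. The second and third terms are nonnegative: $-B$ is positive semidefinite because $G$ is compact, and the third term is a sum of squares. Hence
\[ \op{Ric}_g(X,X)\geq -\frac{1}{2}\sum_{i=1}^n\|[X,E_i]_{\fr{m}}\|_g^2, \]
and it suffices to show that $\sum_i\|[X,E_i]_{\fr{m}}\|_g^2\leq 2C_{ric}A_g^2\|X\|_g^2$.

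We choose the $g$-orthonormal basis adapted to \eqref{SDecomposition}: $E_j^l=\alpha_j^{-1}F_j^l$, where $\{F_j^l\}$ is a $Q$-orthonormal basis of $\fr{m}_j$. We write $X=\sum_i X_i$ with $X_i\in\fr{m}_i$, so that $\|X\|_g^2=\sum_i\alpha_i^2\|X_i\|^2$, the norm without subscript being the $Q$-norm. Projecting onto each $\fr{m}_k$ gives
\[ \|[X,E_j^l]_{\fr{m}}\|_g^2=\sum_k\alpha_k^2\Big\|\sum_i [X_i,\alpha_j^{-1}F_j^l]_{\fr{m}_k}\Big\|^2\leq s\sum_{i,k}\frac{\alpha_k^2}{\alpha_j^2}\,\|[X_i,F_j^l]_{\fr{m}_k}\|^2, \]
where the inequality is Cauchy--Schwarz over $i$.

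The key step is to bound each term. A term with $[\fr{m}_i,\fr{m}_j]_{\fr{m}_k}=\{0\}$ vanishes. Otherwise $\alpha_k/(\alpha_i\alpha_j)\leq A_g$, and by \eqref{Cg},
\[ \frac{\alpha_k^2}{\alpha_j^2}\|[X_i,F_j^l]_{\fr{m}_k}\|^2\leq C_{\fr{g}}^2\,\frac{\alpha_k^2}{\alpha_i^2\alpha_j^2}\,\alpha_i^2\|X_i\|^2\leq C_{\fr{g}}^2A_g^2\,\alpha_i^2\|X_i\|^2. \]
This is the step where the right weights must be inserted: the factor $\alpha_i^2$ is multiplied in and divided out so that the ratio appearing is exactly $\alpha_k/(\alpha_i\alpha_j)$ from the definition of $A_g$. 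A careless estimate would instead produce a ratio such as $\alpha_k/\alpha_j$, which has no uniform bound.

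Summing over $l$ (at most $n$ terms), over $j$ and $k$ (at most $s$ each), and over $i$ gives
\[ \sum\|[X,E_j^l]_{\fr{m}}\|_g^2\leq s^3nC_{\fr{g}}^2A_g^2\|X\|_g^2. \]
Therefore $\op{Ric}_g\geq -\tfrac12 s^3nC_{\fr{g}}^2A_g^2\,g$, and we take $C_{ric}:=\tfrac12 s^3nC_{\fr{g}}^2$, which depends only on $G/K$, $Q$ and the decomposition. When $A_g=0$ all brackets between the $\fr{m}_i$ have zero $\fr{m}$-component, the first term of \eqref{RicciTensorBesse} vanishes, and $\op{Ric}_g\geq 0$, as claimed.
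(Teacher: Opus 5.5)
Your proposal is correct and is essentially the paper's argument. Like the paper, you drop the Killing-form and sum-of-squares terms in Besse's formula, split $[X,E_j^l]_{\mathfrak m}$ along the $g$-orthogonal summands $\mathfrak m_k$, apply Cauchy--Schwarz, and bound each nonzero term by $C_{\mathfrak g}^2A_g^2\alpha_i^2\|X_i\|^2$ via the ratio $\alpha_k/(\alpha_i\alpha_j)$. Your constant $\tfrac12 s^3nC_{\mathfrak g}^2$ is only a looser count than the paper's $\tfrac n2(sC_{\mathfrak g})^2$. One small point: the statement asks for a positive constant, so take $\max\{1,\cdot\}$ as the paper does, since your constant vanishes in the abelian case $C_{\mathfrak g}=0$.
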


\begin{proof} Observe that the decomposition \eqref{SDecomposition} is also $g$-orthogonal. Set $d_i:=\dim{\fr{m}_i}$, $i=1,\dots,s$, and let $\{E_i^{j}\}$, $i=1,\dots ,s$, $j=1,\dots, d_i$, be a $g$-orthonormal basis adapted to the decomposition \eqref{SDecomposition}.  Since the killing form $B$ of $\fr{g}$ is negative semidefinite, formula \eqref{RicciTensorBesse} yields
     \begin{equation}\label{FirstRicciBound} \op{Ric}_g(X,X)\geq -\frac{1}{2}\sum_{i=1}^s{\sum_{j=1}^{d_i}{\|[X,E_i^j]_{\fr{m}}\|_g^2}}. \end{equation}
     
     \noindent Writing $X=\sum_{l=1}^s{X_l}$, $X_l\in \fr{m}_l$, we have

\[ [X,E_i^j]_{\fr{m}}=\sum_{k=1}^s{\sum_{l=1}^s{[X_l,E_i^j]_{\fr{m}_k}}}. \]
 
\noindent By using successively the $g$-orthogonality of $\fr{m}_k$, $k=1,\dots,s$, the triangle inequality, the Cauchy-Schwarz inequality and the definition of $g$, we obtain

\begin{eqnarray*} \|[X,E_i^j]_{\fr{m}}\|_g^2&=&\sum_{k=1}^s{\|\sum_{l=1}^s{[X_l,E_i^j]_{\fr{m}_k}}}\|_g^2\leq \sum_{k=1}^s{\big(\sum_{l=1}^s{\|[X_l,E_i^j]_{\fr{m}_k}}\|_g\big)^2}\\ \\ 
&\leq& \sum_{k=1}^s{\big({s\sum_{l=1}^s{\|[X_l,E_i^j]_{\fr{m}_k}\|_g^2}}\big)}=s\sum_{k=1}^s{\sum_{l=1}^s{\alpha_k^2\|[X_l,E_i^j]_{\fr{m}_k}\|^2}}.\end{eqnarray*}

Using the definition of $A_g$ in the hypothesis, the facts that $\|X_l\|_g=\alpha_l\|X_l\|$ and $\|E_i^j\|=\frac{1}{\alpha_i}$, as well as relation \eqref{Cg}, each non-zero term in the above sum satisfies
 
 \begin{equation*}\alpha_k^2\|[X_l,E_i^j]_{\fr{m}_k}\|^2\leq C_{\fr{g}}^2\alpha_k^2\|X_l\|^2\|E_i^j\|^2=C_{\fr{g}}^2(\frac{\alpha_k}{\alpha_l\alpha_i})^2\|X_l\|^2_g\leq C_{\fr{g}}^2A_g^2\|X_l\|^2_g.\end{equation*}

\noindent Therefore, 

\[  \|[X,E_i^j]_{\fr{m}}\|_g^2\leq (C_{\fr{g}}A_g)^2s\sum_{k=1}^s{\sum_{l=1}^s{\|X_l\|^2_g}}\leq (sC_{\fr{g}}A_g)^2\|X\|_g^2=(sC_{\fr{g}}A_g)^2g(X,X).   \]

\noindent Along with relation \eqref{FirstRicciBound}, and the fact that $\sum_{i=1}^s{d_i}=n$, we obtain

\[ \op{Ric}_g(X,X)\geq -\frac{1}{2}\sum_{i=1}^s{\sum_{j=1}^{d_i}{(sC_{\fr{g}}A_g)^2g(X,X)}}= -\frac{n}{2}(sC_{\fr{g}}A_g)^2g(X,X), \]
  
\noindent which yields the desired result for $C_{ric}:=\max\{1,\frac{n}{2}(sC_{\fr{g}})^2\}$.    \end{proof}

For our case, we immediately obtain the following.

\begin{corol}\label{RicciBoundCorol} Let $(G/K,\alpha_I)$ be a homogeneous space satisfying Assumption \ref{Assumption}, where $\alpha_1\leq\alpha_2\leq\alpha_3$. Then there exists a positive constant $C_{ric}$, depending only on the space $G/K$, such that  
\[ \op{Ric}_{\alpha_I}\geq -C_{ric}(\frac{\alpha_3}{\alpha_1\alpha_2})^2\alpha_I. \] \end{corol}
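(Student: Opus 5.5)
The corollary follows by applying Proposition \ref{RicciBoundProp} with $s=3$ to the decomposition $\fr{m}=\fr{m}_1\oplus\fr{m}_2\oplus\fr{m}_3$ of Assumption \ref{Assumption}. It then remains to show that $A_{\alpha_I}=\frac{\alpha_3}{\alpha_1\alpha_2}$ whenever $\alpha_1\leq\alpha_2\leq\alpha_3$. The constant $C_{ric}$ supplied by the proposition depends only on $G/K$, $Q$ and the decomposition, all of which are fixed by the space. The relabeling that orders the $\alpha_i$ only permutes the three summands. Taking the maximum of the constants over the finitely many permutations, or noting that the constant $\max\{1,\frac{n}{2}(3C_{\fr{g}})^2\}$ produced in the proof is symmetric, gives a constant independent of the ordering.

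To compute $A_{\alpha_I}$, I first list which projections $[\fr{m}_i,\fr{m}_j]_{\fr{m}_k}$ can be nonzero. By Lemma \ref{LemmaRelaxed}, part 1, we have $[\fr{m}_i,\fr{m}_i]\subseteq\fr{k}$, so all projections with $i=j$ vanish. For $i\neq j$, condition \eqref{cyclicbracket} gives $[\fr{m}_i,\fr{m}_j]=\fr{m}_k$ with $k$ the third index. Hence $[\fr{m}_i,\fr{m}_j]_{\fr{m}_l}=0$ for $l\in\{i,j\}$, while $[\fr{m}_i,\fr{m}_j]_{\fr{m}_k}=\fr{m}_k\neq 0$. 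The nonzero triples $(i,j,k)$ are therefore exactly the permutations of $(1,2,3)$, provided the $\fr{m}_k$ are nonzero. The nonzero case is implicit in the setting; if some $\fr{m}_k$ were trivial, the set over which the maximum is taken could only shrink, and the bound below would still hold.

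The maximum is thus taken over the three quantities $\frac{\alpha_3}{\alpha_1\alpha_2}$, $\frac{\alpha_2}{\alpha_1\alpha_3}$ and $\frac{\alpha_1}{\alpha_2\alpha_3}$. Multiplying each by $\alpha_1\alpha_2\alpha_3$ turns them into $\alpha_3^2,\alpha_2^2,\alpha_1^2$, so the largest is $\frac{\alpha_3}{\alpha_1\alpha_2}$. In general it suffices to note that $A_{\alpha_I}\leq\frac{\alpha_3}{\alpha_1\alpha_2}$, which together with the proposition gives $\op{Ric}_{\alpha_I}\geq -C_{ric}A_{\alpha_I}^2\alpha_I\geq -C_{ric}\big(\frac{\alpha_3}{\alpha_1\alpha_2}\big)^2\alpha_I$.

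The only step needing care is the vanishing of the diagonal projections $[\fr{m}_i,\fr{m}_i]_{\fr{m}_k}$. Without Lemma \ref{LemmaRelaxed}, part 1, these would contribute terms $\frac{\alpha_k}{\alpha_i^2}$, which can be much larger than $\frac{\alpha_3}{\alpha_1\alpha_2}$. Everything else is a direct substitution into Proposition \ref{RicciBoundProp}.
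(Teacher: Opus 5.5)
Your proposal is correct and is essentially the paper's proof. You apply Proposition \ref{RicciBoundProp} with $s=3$, use part 1 of Lemma \ref{LemmaRelaxed} to kill the diagonal projections $[\fr{m}_i,\fr{m}_i]_{\fr{m}}$, and reduce $A_{\alpha_I}$ to the maximum of the three cyclic ratios, which is $\frac{\alpha_3}{\alpha_1\alpha_2}$; your remarks on the ordering-independence of $C_{ric}$ and on degenerate summands are harmless additions that the paper leaves implicit.
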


\begin{proof} Assumption \ref{Assumption}, along with part 1. of Lemma \ref{LemmaRelaxed}, respectively yield $[\fr{m}_i,\fr{m}_j]=\fr{m}_k$, for $i,j,k$ pairwise distinct, and $[\fr{m}_i,\fr{m}_i]\subseteq \fr{k}$, for $i=1,2,3$. Hence

\[ [\fr{m}_i,\fr{m}_i]_{\fr{m}}=\{0\}, \ \ i=1,2,3.\]

\noindent Therefore, the only non-zero terms in the definition of $A_{\alpha_I}$ are given by pairwise distinct indices $i,j,k$. Thus, under the condition $\alpha_1\leq\alpha_2\leq\alpha_3$, we have

\[ A_{\alpha_I}=\max\{\frac{\alpha_3}{\alpha_1\alpha_2}, \frac{\alpha_2}{\alpha_1\alpha_3}, \frac{\alpha_1}{\alpha_2\alpha_3}\}=\frac{\alpha_3}{\alpha_1\alpha_2}.\]
 
 \noindent The result then follows from Proposition \ref{RicciBoundProp}.   \end{proof}

\begin{remark}\label{RemarkRicciProduct1} We note that if the submodules $\fr{m}_i$ in the decomposition \eqref{SDecomposition} satisfy $[\fr{m}_i,\fr{m}_j]_{\fr{m}}=\{0\}$, for all $i,j=1,\dots,s$, then $A_g=0$ and hence $\op{Ric}_g\geq 0$.\end{remark}

\section{Euclidean interval of the radius}\label{SectionEuclidean}

The results of this section hold for any compact homogeneous space with a diagonal metric, so we consider the general setting of the previous section. We determine the first radius interval on which the volume growth is uniformly comparable to $r^n$. The lower Ricci bound of the previous section allows us to apply Bishop--Gromov comparison on this interval.
 
 More specifically, let $G/K$ be a compact homogeneous space with the $Q$-orthogonal reductive decomposition 

\[ \fr{g}=\fr{k}\oplus \fr{m}_1\oplus \cdots \oplus \fr{m}_s, \]

\noindent into $\op{Ad}_K$-invariant subspaces $\fr{m}_i$, endowed with the $G$-invariant metric 

\begin{equation}\label{DiagonalEuclidean} g:=\alpha_1^2\left.Q\right|_{\fr{m}_1\times \fr{m}_1}+\cdots + \alpha_s^2\left.Q\right|_{\fr{m}_s\times \fr{m}_s}, \ \ \alpha_i>0. \end{equation}

\noindent As in Section \ref{sectionPreliminaries}, set 

\begin{equation}\label{lambdaGi} \Lambda_g:=\prod_{i=1}^s\alpha_i^{d_i},\end{equation}
 
 \noindent and consider the functions $Vol_g(r)=\mu_g(B_g(o,r))$ and $V_g(r)=\mu_0(B_g(o,r))$, so that 

\[ Vol_g(r)=\Lambda_gV_g(r). \]

\noindent We prove the following. 

\begin{prop}\label{EuclideanBoundGeneral} Let $A_g$ be as in Proposition \ref{RicciBoundProp} and set

\[ \alpha_{min}:=\min\{\alpha_i, i=1,\dots,s\}. \]

\noindent Assume that $A_g>0$. Then there exist positive constants $C_1,C_2$, depending only on the space $G/K$, such that

\[  C_1r^n\leq Vol_g(r)\leq C_2r^n, \ \ 0< r\leq \min\{\alpha_{min}, A_g^{-1}\},\]

\noindent uniformly for all $\alpha_1, \dots,\alpha_s$.
 \end{prop}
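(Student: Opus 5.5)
Both bounds will be derived from Bishop--Gromov comparison, using the Ricci lower bound of Proposition \ref{RicciBoundProp} together with an explicit comparison at a fixed scale coming from exponential coordinates.

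\emph{Upper bound.} By Proposition \ref{RicciBoundProp}, $\op{Ric}_g\geq -(n-1)\kappa^2 g$ with $\kappa^2:=C_{ric}A_g^2/(n-1)$. Bishop's volume comparison gives $Vol_g(r)\leq V_{-\kappa^2}(r)$, the volume of the $r$-ball in the $n$-dimensional space form of curvature $-\kappa^2$. For $r\leq A_g^{-1}$ we have $\kappa r\leq \sqrt{C_{ric}/(n-1)}$, a constant depending only on $G/K$. Therefore
\[
V_{-\kappa^2}(r)=\omega_{n-1}\int_0^r\Big(\frac{\sinh \kappa t}{\kappa}\Big)^{n-1}dt\leq C_2 r^n .
\]

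\emph{Lower bound.} The relative Bishop--Gromov inequality says that $r\mapsto Vol_g(r)/V_{-\kappa^2}(r)$ is nonincreasing. With $R:=\min\{\alpha_{min},A_g^{-1}\}$ and $0<r\leq R$, this gives
\[
Vol_g(r)\geq \frac{V_{-\kappa^2}(r)}{V_{-\kappa^2}(R)}Vol_g(R)\geq c\,\frac{r^n}{R^n}Vol_g(R),
\]
because $\kappa R$ is bounded, so $V_{-\kappa^2}(t)$ is comparable to $t^n$ on $[0,R]$. It therefore suffices to prove $Vol_g(R)\geq cR^n$.

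For this I use the standard exponential coordinates $\Psi$ and Lemma \ref{boundedcoordinates}. Take $\delta:=\epsilon_0/\sqrt{n}$, which we may assume is at most $1$. Consider the box $S:=\prod_{i,j}[-\delta R/\alpha_i,\delta R/\alpha_i]$ in the coordinates $x_i^j$. Since $R\leq \alpha_{min}\leq\alpha_i$, we have $S\subseteq[-\epsilon_0,\epsilon_0]^n$. For $X\in S$, relation \eqref{triangle2} gives
\[
d_g(o,\Psi(X))\leq \Big(\sum_{i,j}\alpha_i^2 (x_i^j)^2\Big)^{1/2}\leq \sqrt{n}\,\delta R<R
\]
(if $\epsilon_0\geq 1$, shrink $\delta$ so that $\sqrt n\,\delta<1$). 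Hence $\Psi(S)\subseteq B_g(o,R)$, and Lemma \ref{boundedcoordinates} yields $V_g(R)\geq C_0 m(S)=C_0(2\delta)^nR^n\Lambda_g^{-1}$. Multiplying by $\Lambda_g$ gives $Vol_g(R)\geq C_0(2\delta)^nR^n$, which is the required estimate.

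The main obstacle is the lower bound at the scale $R$. In the box argument the factor $\Lambda_g$ must cancel exactly, and the box must stay inside the fixed coordinate chart. The condition $r\leq\alpha_{min}$ is what guarantees the latter. The bound $r\leq A_g^{-1}$ is used only to keep $\kappa r$ bounded in the comparison functions. Note also that the direct box argument already works for every $r\leq\alpha_{min}$, which would avoid the monotonicity step entirely; I will use whichever presentation is cleaner.
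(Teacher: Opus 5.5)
Your proposal is correct and is essentially the paper's proof. The upper bound is the same Bishop comparison, with $\kappa r$ kept bounded by $r\le A_g^{-1}$, and the paper's lower bound is exactly the direct box argument you mention at the end, run for every $0<r\le\alpha_{min}$ (with the mixed coordinates $\Phi$ and the triangle inequality over the summands instead of $\Psi$), so your relative Bishop--Gromov reduction to the scale $R$ is valid but unnecessary.
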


\begin{proof} We set 

\[ k_g:=\frac{C_{ric}}{n-1}A_g^2. \]

\noindent By Proposition \ref{RicciBoundProp}, we have $\op{Ric}_g\geq -(n-1)k_gg$, and thus Bishop-Gromov comparison (see e.g. \cite{Peter}, Lemma 35) yields

\[ Vol_g(r)\leq Vol_{-k_g}(r), \ \ r>0,\]

\noindent where $Vol_{-k_g}(r)$ denotes the volume of the ball of radius $r$ in the simply connected $n$-dimensional space form of constant sectional curvature $-k_g$. Now the volume $Vol_{-k_g}(r)$ is given by (\cite{Chav}, Chapter III, Section 4)

\begin{equation*}Vol_{-k_g}(r)=c_{n-1}\int_0^r{\bigg(\frac{\sinh{(\sqrt{k_g}t)}}{\sqrt{k_g}}\bigg)^{n-1}dt}, \end{equation*}

\noindent where $c_{n-1}$ is the $n-1$ dimensional area of the unit sphere in $\mathbb R^n$.
For $x\geq 0$, we have $\sinh{(x)}\leq xe^x$. Therefore, for $0< t\leq r$, 

\[ \bigg(\frac{\sinh(\sqrt{k_g}t)}{\sqrt{k_g}}\bigg)^{n-1}\leq t^{n-1}e^{(n-1)\sqrt{k_g}t} \leq t^{n-1}e^{(n-1)\sqrt{k_g}r},\]
 
 \noindent and integrating gives 

\begin{equation}\label{Sampanis} Vol_g(r)\leq Vol_{-k_g}(r)\leq \frac{c_{n-1}}{n}r^n e^{(n-1)\sqrt{k_g}r}. \end{equation}

\noindent Assume that $0< r\leq \min\{\alpha_{min}, A_g^{-1}\}$. Since $r\leq A_g^{-1}$, a straightforward calculation shows that $(n-1)\sqrt{k_g}r\leq \sqrt{(n-1)C_{ric}}$, and thus Equation \eqref{Sampanis} yields

  \[ Vol_g(r)\leq C_2r^n, \ \ \makebox{with} \ \ C_2:=\frac{c_{n-1}}{n}e^{\sqrt{(n-1)C_{ric}}},\]

 \noindent which yields the upper bound.

For the lower bound, we consider the exponential coordinates of the mixed kind $\Phi:U\rightarrow G/K$ given in Section \ref{sectioncoordinate}. Let $\epsilon_0,C_0$ be as in Lemma \ref{boundedcoordinates}, so that 

 \begin{equation}\label{Jolene}\mu_0(\Phi(S))\geq C_0m(S), \end{equation} 

\noindent for any measurable $S\subseteq [-\epsilon_0,\epsilon_0]^n$. Set 

\[ D_0:=\sum_{i=1}^s{\sqrt{d_i}}\ \ \makebox{and} \ \ D_1:=\max\{2D_0,\epsilon_0^{-1}\}. \]

\noindent Consider the box

\[ S:=\prod_{i=1}^s{\bigg[-\frac{r}{D_1\alpha_i},\frac{r}{D_1\alpha_i}\bigg]^{d_i}}\subset \fr{m},  \ \ \makebox{with} \ \ \bigg[-\frac{r}{D_1\alpha_i},\frac{r}{D_1\alpha_i}\bigg]^{d_i}\subset \fr{m}_i.\]

\noindent Since $0<r\leq \alpha_{min}\leq \alpha_i$, $i=1,\dots, s$,  we have $\frac{r}{D_1\alpha_i}\leq \epsilon_0$ for all $i=1,\dots, s$, and hence 

 \[ S \subseteq [-\epsilon_0,\epsilon_0]^n.\]

\noindent On the other hand, for any $X=X_1+\cdots +X_s\in S$, $X_i\in \fr{m}_i$, the inequalities \eqref{triangle} and \eqref{triangle2}, as well as  the definitions of the metric $g$, the coordinates $\Phi$ and the constants $D_0,D_1$, yield

\begin{eqnarray*}d_g(o,\Phi(X))&\leq &\sum_{i=1}^s{d_g(o,\pi(\exp{X_i}))}\leq\sum_{i=1}^s{\sqrt{g(X_i,X_i)}}\\ \\
&=&\sum_{i=1}^s{\alpha_i\|X_i\|}\leq \sum_{i=1}^s{\alpha_i\sqrt{d_i}\frac{r}{D_1\alpha_i}}\leq \frac{r}{D_1}\sum_{i=1}^s{\sqrt{d_i}}=\frac{D_0}{D_1}r<r.\end{eqnarray*}

\noindent Therefore, $\Phi(S)\subseteq B_g(o,r)$. Along with relation \eqref{Jolene}, the definition of the function $V_g(r)$, and the definition \eqref{lambdaGi} of $\Lambda_g$, we obtain

\[ V_g(r)=\mu_0(B_g(o,r))\geq \mu_0(\Phi(S))\geq C_0m(S)=C_0(\frac{2r}{D_1})^n\Lambda_g^{-1}.\]

\noindent We conclude that 

\[ Vol_g(r)=\Lambda_gV_g(r)\geq C_1r^n, \]
 
 \noindent with $C_1:=C_0(\frac{2}{D_1})^n$, thus completing the proof for the lower bound.\end{proof} 

\begin{remark}\label{UniformDoublingProduct}We note that if the submodules $\fr{m}_i$ satisfy $[\fr{m}_i,\fr{m}_j]_{\fr{m}}=\{0\}$, for all $i,j=1,\dots,s$, then by Remark \ref{RemarkRicciProduct1} we have $Ric_g\geq 0$, and thus Bishop-Gromov comparison yields

\[ \frac{Vol_g(2r)}{Vol_g(r)}\leq 2^n, \]

\noindent i.e., the space $G/K$ is uniformly doubling on the complete set of metrics \eqref{DiagonalEuclidean}. \end{remark}

For the spaces satisfying Assumption \ref{Assumption}, we obtain the following.

\begin{corol}\label{CorolEuclideanFinal} Let $(G/K,\alpha_I)$ be a homogeneous space satisfying Assumption \ref{Assumption}. Then there exist positive constants $C_1,C_2$, depending only on the space $G/K$, such that

\[  C_1r^n\leq Vol_{\alpha_I}(r)\leq C_2r^n, \ \ 0< r\leq \frac{\alpha_1\alpha_2}{\alpha_3},\] 

\noindent uniformly in $\alpha_1\leq \alpha_2\leq \alpha_3$. \end{corol}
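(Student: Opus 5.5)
This is a direct application of Proposition \ref{EuclideanBoundGeneral} with $s=3$ and the metric $g=\alpha_I$. The plan is to identify the two quantities appearing there, $A_{\alpha_I}$ and $\alpha_{min}$. Then we check that the interval $0<r\leq \frac{\alpha_1\alpha_2}{\alpha_3}$ lies inside the admissible interval $0<r\leq \min\{\alpha_{min},A_{\alpha_I}^{-1}\}$.

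First, by the proof of Corollary \ref{RicciBoundCorol}, Assumption \ref{Assumption} together with part 1 of Lemma \ref{LemmaRelaxed} gives $[\fr{m}_i,\fr{m}_i]_{\fr{m}}=\{0\}$. Moreover $[\fr{m}_i,\fr{m}_j]=\fr{m}_k\neq\{0\}$ for $i,j,k$ pairwise distinct; here we note that each $\fr{m}_k$ is nonzero, since otherwise the cyclic relations force trivialities. Hence
\[ A_{\alpha_I}=\max\Big\{\frac{\alpha_3}{\alpha_1\alpha_2},\frac{\alpha_2}{\alpha_1\alpha_3},\frac{\alpha_1}{\alpha_2\alpha_3}\Big\}=\frac{\alpha_3}{\alpha_1\alpha_2}>0, \]
using $\alpha_1\leq\alpha_2\leq\alpha_3$. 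In particular the hypothesis $A_g>0$ of Proposition \ref{EuclideanBoundGeneral} holds, and $A_{\alpha_I}^{-1}=\frac{\alpha_1\alpha_2}{\alpha_3}$.

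Second, $\alpha_{min}=\alpha_1$. Since $\alpha_2\leq\alpha_3$, we have $\frac{\alpha_1\alpha_2}{\alpha_3}\leq \alpha_1$, so $\min\{\alpha_{min},A_{\alpha_I}^{-1}\}=\frac{\alpha_1\alpha_2}{\alpha_3}$. Proposition \ref{EuclideanBoundGeneral} therefore yields constants $C_1,C_2$, depending only on $G/K$ (with $Q$ and the decomposition fixed), such that $C_1r^n\leq Vol_{\alpha_I}(r)\leq C_2r^n$ for $0<r\leq\frac{\alpha_1\alpha_2}{\alpha_3}$.

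The only point needing care is uniformity under the relabeling that enforces $\alpha_1\leq\alpha_2\leq\alpha_3$. There are only six permutations of the decomposition, and the relabeled decomposition still satisfies Assumption \ref{Assumption}. So we take the minimum of the $C_1$'s and the maximum of the $C_2$'s over these permutations. Equivalently, we observe that the constants in Proposition \ref{EuclideanBoundGeneral} (namely $C_{ric}$, $\epsilon_0$, $C_0$ and $D_0$) are symmetric in the labeling. Beyond this there is no real obstacle.
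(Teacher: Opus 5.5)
Your proposal is correct and is the paper's argument: identify $A_{\alpha_I}=\frac{\alpha_3}{\alpha_1\alpha_2}$ via Corollary \ref{RicciBoundCorol}, note that $A_{\alpha_I}^{-1}\leq \alpha_1=\alpha_{min}$, and invoke Proposition \ref{EuclideanBoundGeneral}. The six-permutation step is harmless but unnecessary, because Proposition \ref{EuclideanBoundGeneral}, applied to the original fixed decomposition, is already uniform over all values of $\alpha_1,\alpha_2,\alpha_3$ with no ordering assumed.
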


\begin{proof} By Corollary \ref{RicciBoundCorol}, we have $A_g=A_{\alpha_I}=\frac{\alpha_3}{\alpha_1\alpha_2}$. Given that $A_{\alpha_I}^{-1}\leq \alpha_1=\alpha_{min}$, the result follows from Proposition \ref{EuclideanBoundGeneral}. \end{proof}

\section{Sub-Riemannian interval of the radius}\label{SectionSubRiemannian}

In this section, we consider the second interval for the radius of a metric ball. The volume growth evolves from Euclidean to sub-Riemannian, as the expensive directions in $\fr{m}_3$ can now be generated more efficiently by directions in $\fr{m}_1$ and $\fr{m}_2$, via the relation $\fr{m}_3=[\fr{m}_1,\fr{m}_2]$. This leads to a volume growth of order $r^{n+d_3}$. Throughout this section, we assume that $G/K$ is a homogeneous space satisfying Assumption \ref{Assumption} with $\alpha_1\leq \alpha_2\leq \alpha_3$.

\subsection{Sub-Riemannian lower bound}\label{HeisenbergLowerSection}
The main result of this subsection is the following lower bound.

\begin{prop}\label{HeisenbergLower} Let $(G/K,\alpha_I)$ be a homogeneous space satisfying Assumption \ref{Assumption}.  Then there exists a positive constant $C_3$, depending only on the space $G/K$, such that

\[ Vol_{\alpha_I}(r)\geq C_3\big(\frac{\alpha_3}{\alpha_1\alpha_2}\big)^{d_3}r^{n+d_3}, \ \ 0< r\leq \alpha_1,\]  

\noindent uniformly in $\alpha_1\leq \alpha_2\leq \alpha_3$.
\end{prop}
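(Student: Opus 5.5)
For $0<r\leq \frac{\alpha_1\alpha_2}{\alpha_3}$ the claim follows from Corollary \ref{CorolEuclideanFinal}. On this range $\big(\frac{\alpha_3}{\alpha_1\alpha_2}\big)^{d_3}r^{n+d_3}=\big(\frac{\alpha_3 r}{\alpha_1\alpha_2}\big)^{d_3}r^n\leq r^n$, so we may take $C_3\leq C_1$. The real work is therefore on $\frac{\alpha_1\alpha_2}{\alpha_3}<r\leq \alpha_1$. There I would prove $V_{\alpha_I}(r)\gtrsim (r/\alpha_1)^{d_1}(r/\alpha_2)^{d_2}\big(r^2/(\alpha_1\alpha_2)\big)^{d_3}$. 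Multiplying by $\Lambda_{\alpha_I}$ gives exactly $\big(\frac{\alpha_3}{\alpha_1\alpha_2}\big)^{d_3}r^{n+d_3}$. The strategy is to exhibit a coordinate map whose image lies in $B_{\alpha_I}(o,r)$ and whose domain is such a box, with Jacobian bounded below uniformly.

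\textbf{Construction.} Take the vectors $A^k_l\in\fr{m}_1$ and $B^k_l\in\fr{m}_2$ of Lemma \ref{generator}, and set $\lambda:=\sqrt{\alpha_2/\alpha_1}\geq 1$. For $t\in\mathbb R$ and a pair $(A,B)$, define
\[
c_{A,B}(t):=\exp(-sA)\exp(-\tau B)\exp(sA)\exp(\tau B),\qquad s=\lambda\sqrt{|t|},\quad \tau=\op{sgn}(t)\sqrt{|t|}/\lambda,
\]
so that $s\tau=t$ and $c_{A,B}(t)=\exp\big(t[A,B]+\text{higher order}\big)$. For $X_1\in\fr{m}_1$, $X_2\in\fr{m}_2$ and $T=(t_1,\dots,t_{d_3})$, define
\[
F(X_1,X_2,T):=\pi\Big(\exp X_1\,\exp X_2\,\prod_{k=1}^{d_3}\prod_{l=1}^{N}c_{A^k_l,B^k_l}(t_k)\Big).
\]
By \eqref{triangle} and \eqref{triangle2}, each commutator contributes at most $2(\alpha_1 s\|A\|+\alpha_2|\tau|\|B\|)\lesssim \sqrt{\alpha_1\alpha_2|t_k|}$ to the distance. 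Hence on the box
\[
S:=\Big[-\tfrac{r}{D\alpha_1},\tfrac{r}{D\alpha_1}\Big]^{d_1}\times\Big[-\tfrac{r}{D\alpha_2},\tfrac{r}{D\alpha_2}\Big]^{d_2}\times\Big[-\tfrac{r^2}{D\alpha_1\alpha_2},\tfrac{r^2}{D\alpha_1\alpha_2}\Big]^{d_3},
\]
with $D$ a large constant depending only on $G/K$, we get $F(S)\subseteq B_{\alpha_I}(o,r)$. Since $r\leq\alpha_1\leq\alpha_2$, all side lengths are at most $1/D$. The commutator parameters also stay small: $s=\lambda\sqrt{|t|}\leq r/(\sqrt D\,\alpha_1)\leq D^{-1/2}$ and $|\tau|\leq D^{-1/2}$. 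So everything lives in a fixed small neighbourhood of the identity. The box has Lebesgue measure $\asymp (r/\alpha_1)^{d_1}(r/\alpha_2)^{d_2}(r^2/\alpha_1\alpha_2)^{d_3}$.

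\textbf{Main obstacle: a uniform Jacobian bound for $F$.} This is the step I expect to be hardest. $F$ depends on $\lambda$, which is unbounded, and it is only $\mathcal C^1$ in $T$ because of the $\sqrt{|t|}$ dependence. I would compute the commutator expansion via BCH:
\[
\log c_{A,B}(t)=t[A,B]+O(s^2\tau)\,[A,[A,B]]+O(s\tau^2)\,[B,[A,B]]+\cdots.
\]
The degree-three terms satisfy $s^2\tau=ts$ and $s\tau^2=t\tau$, so they are $|t|$ times a factor at most $D^{-1/2}$, uniformly in $\lambda$. Moreover, by \eqref{cyclicbracket} and part 1 of Lemma \ref{LemmaRelaxed}, these terms lie in $\fr{m}_2$ and $\fr{m}_1$ respectively, and higher terms lie in $\fr{k}\oplus\fr{m}$.

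I would then pass to rescaled variables $u=(u_1,u_2,u_3)\in[-1,1]^n$ with
\[
X_1=\tfrac{r}{D\alpha_1}u_1,\qquad X_2=\tfrac{r}{D\alpha_2}u_2,\qquad T=\tfrac{r^2}{D\alpha_1\alpha_2}u_3,
\]
and write $F=\Psi\circ\big(\text{identity}+E\big)$ in exponential coordinates $\Psi$, modulo the $\fr{k}$-component, which is handled by mixed coordinates and projection. The goal is to show that the error $E$ has $\mathcal C^1$ derivative in $u$ that is small relative to the diagonal scaling, uniformly in $\lambda$ and $r$, provided $D$ is large. The cross terms need checking: for instance the $\fr{m}_2$-error $O(s|t|)$ must be compared with the $\fr{m}_2$-side $r/(D\alpha_2)$. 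Here $s|t|\lesssim \frac{r}{\alpha_1}\cdot\frac{r^2}{D\alpha_1\alpha_2}\leq \frac{r}{D\alpha_2}\cdot\big(\tfrac{r}{\alpha_1}\big)^2 D^{-1/2}$, which is fine. Derivatives in $t$ of the $\sqrt{|t|}$ terms produce $|t|^{1/2}$ factors that are handled the same way.

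\textbf{Conclusion.} Given this uniform perturbation bound, a quantitative inverse function argument (or a degree argument) shows that $F$ is injective on $S$ with $J_F\geq c>0$ uniformly. Combining with Lemma \ref{boundedcoordinates}, we get $\mu_0(F(S))\gtrsim m(S)$, and multiplying by $\Lambda_{\alpha_I}$ yields the proposition.
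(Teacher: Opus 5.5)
Your construction is the paper's in different coordinates. With $\delta:=\alpha_1/\alpha_2=\lambda^{-2}$, your map is $F(X_1,X_2,T)=F^{\delta}(X_1,X_2,T/\delta)$, up to which commutator parameter carries the sign. Your box is, up to constants, the paper's $S_r$ after this rescaling of the $\fr{m}_3$-variable. The distance bound and the volume bookkeeping coincide, and the paper's $J_{F^{\delta}}\geq\frac12\delta^{d_3}$ becomes your $J_F\gtrsim 1$.

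The difference lies in the step you flag as the main obstacle. The paper argues qualitatively. It writes $\partial_{z_k}F^{\delta}=\delta f_k$ with $f_k$ jointly continuous on $V\times[0,1]$, including the degenerate value $\delta=0$. Compactness of $[0,1]$ then gives one ball $V$ on which $\tilde G^{\delta}=\Phi^{-1}\circ F^{\delta}\circ\lambda_{\delta}$ satisfies $\|d\tilde G^{\delta}-\op{Id}_{\fr{m}}\|_{op}<\frac12$, and injectivity follows from the mean value theorem. Up to the sign convention, $\tilde G^{\delta}$ is exactly your $\Phi^{-1}\circ F$.

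You instead propose an anisotropic remainder estimate in the variables $u\in[-1,1]^n$, and leave it as a sketch. It can be completed, but it is more than you need. The Jacobian is unchanged by your diagonal rescaling, and injectivity on the convex box $S$ only requires $d(\Phi^{-1}\circ F)$ to be within $\frac12$ of the identity on $S$. In your parametrization this is short. By Lemma \ref{CBH}, $\prod_l c_{A^k_l,B^k_l}(t_k)=\exp\big(t_kh^k(s_k,\tau_k)\big)$ with $h^k$ smooth near $(0,0)$ and $h^k(0,0)=E_3^k$. Since $t_k\partial_{t_k}s_k=s_k/2$ and $t_k\partial_{t_k}\tau_k=\tau_k/2$, for $t_k\neq 0$ we get
\[
\partial_{t_k}\big(t_kh^k(s_k,\tau_k)\big)=h^k+\tfrac{s_k}{2}\,\partial_1h^k+\tfrac{\tau_k}{2}\,\partial_2h^k ,
\]
with everything evaluated at $(s_k,\tau_k)$; at $t_k=0$ the derivative is $E_3^k$. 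Because $|s_k|,|\tau_k|\leq D^{-1/2}$ on $S$ whatever $\lambda$ is, this derivative is $E_3^k+O(D^{-1/2})$ uniformly, while $|t_kh^k|\lesssim 1/D$. Now apply the chain rule to the smooth map $(X_1,X_2,\Theta^1,\dots,\Theta^{d_3})\mapsto\Phi^{-1}\pi\big(\exp X_1\exp X_2\prod_k\exp\Theta^k\big)$. This gives $d(\Phi^{-1}\circ F)=\op{Id}_{\fr{m}}+O(D^{-1/2})$ on $S$, hence injectivity and $J_F\gtrsim 1$ uniformly.

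So your route replaces the paper's compactness-in-$\delta$ device by an explicit uniform estimate. It works because $(s_k,\tau_k)$ never leaves a fixed neighborhood of $0$, which is what keeps the $\sqrt{|t|}$-singularity and the unbounded $\lambda$ under control. Your preliminary reduction via Corollary \ref{CorolEuclideanFinal} is harmless but unnecessary, since the construction covers all of $(0,\alpha_1]$ directly.
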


To prove Proposition \ref{HeisenbergLower}, we need to generate arbitrary directions $Z\in \fr{m}_3$ from directions in $\fr{m}_1, \fr{m}_2$. Set $N:=d_1d_2$.  For $k=1,\dots, d_3$, define the maps $H^k:\mathbb R^2\rightarrow G$ by the following product of commutators

\begin{equation*}H^k(s,t)=\prod_{l=1}^N\exp{(-sA^k_l)}\exp{(-tB^k_l)}\exp{(sA^k_l)}\exp{(tB^k_l)}, \end{equation*}

\noindent where the vectors $A^k_l,B^k_l$ are given by Lemma \ref{generator}. For $X\in \fr{m}_1$, $Y\in \fr{m}_2$, $Z=(z_k)_{k=1}^{d_3}\in \fr{m}_3$ and $\delta \in \mathbb R$, define the function $F:\fr{m}\times \mathbb R\rightarrow G/K$, with 

\begin{equation*}F(X,Y,Z,\delta)=\pi\big( \exp{X}\cdot \exp{Y}\cdot \prod_{k=1}^{d_3}{H^k(\op{sgn}(z_k)\sqrt{|z_k|},\delta \sqrt{|z_k|})} \big).\end{equation*}

 \noindent  For $\delta\in [0,1]$, we also consider the function $F^{\delta}:\fr{m}\rightarrow G/K$, with

\[  F^{\delta}(X,Y,Z)=F(X,Y,Z,\delta), \ X\in \fr{m}_1,\ Y\in \fr{m}_2,\ Z\in \fr{m}_3.\]

\noindent The family of functions $F^{\delta}$ is constructed to access expensive directions in $\fr{m}_3$ through the cheaper directions in $\fr{m}_1,\fr{m}_2$. We remark that $F$ generalizes a similar function defined for $SU(2)$ in \cite{EGS18}.  We first establish the following estimate.

\begin{lemma}\label{distancelemma} There exists a positive constant $C$, depending only on the space $G/K$, such that for all $X\in \fr{m}_1$, $Y\in \fr{m}_2$, $Z=(z_k)_{k=1}^{d_3}\in \fr{m}_3$ and $\delta\in [0,1]$, it holds
  
  \[ d_{\alpha_I}\big(o,F^{\delta}(X,Y,Z)\big)\leq \alpha_1\|X\|+\alpha_2\|Y\|+\frac{C}{2}(\alpha_1+\delta \alpha_2)\sum_{k=1}^{d_3}{\sqrt{|z_k|}}. \] 
  
\end{lemma}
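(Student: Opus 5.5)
The plan is to expand $F^{\delta}(X,Y,Z)$ as a product of exponentials of vectors lying in $\fr{m}_1$ or $\fr{m}_2$, and then apply the triangle inequality \eqref{triangle} together with \eqref{triangle2} to each factor. By definition,
\[ F^{\delta}(X,Y,Z)=\pi\Big(\exp X\cdot\exp Y\cdot\prod_{k=1}^{d_3}\prod_{l=1}^{N}\exp(-s_kA^k_l)\exp(-t_kB^k_l)\exp(s_kA^k_l)\exp(t_kB^k_l)\Big), \]
where $s_k=\op{sgn}(z_k)\sqrt{|z_k|}$ and $t_k=\delta\sqrt{|z_k|}$. This writes $F^\delta(X,Y,Z)$ as $\pi(x_1x_2\cdots x_L)$, with every $x_i$ of the form $\exp W$ for some $W\in\fr{m}_1$ or $W\in\fr{m}_2$. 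Inequality \eqref{triangle} then bounds $d_{\alpha_I}(o,F^\delta(X,Y,Z))$ by the sum of the terms $d_{\alpha_I}(o,\pi(\exp W))$.

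Next I bound each term with \eqref{triangle2}. For $W\in\fr{m}_1$ we have $\sqrt{\alpha_I(W,W)}=\alpha_1\|W\|$, and for $W\in\fr{m}_2$ we have $\alpha_2\|W\|$, since $d\pi_e$ is the identification $\fr{m}\cong T_o(G/K)$. The first two factors therefore contribute $\alpha_1\|X\|+\alpha_2\|Y\|$. For fixed $k$ and $l$, the four commutator factors contribute
\[ 2\alpha_1|s_k|\,\|A^k_l\|+2\alpha_2|t_k|\,\|B^k_l\| = 2\sqrt{|z_k|}\big(\alpha_1\|A^k_l\|+\delta\alpha_2\|B^k_l\|\big). \]

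To finish, set $M:=\max_{k,l}\max\{\|A^k_l\|,\|B^k_l\|\}$. This is a constant depending only on $G/K$ and $Q$, because the vectors $A^k_l,B^k_l$ are fixed once and for all by Lemma \ref{generator} relative to the fixed basis. Summing over $l=1,\dots,N$ gives a contribution of at most $2NM(\alpha_1+\delta\alpha_2)\sqrt{|z_k|}$ for each $k$, and then summing over $k$ yields the claim with $C:=4NM$.

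There is no real obstacle here. The only points to check are that every factor lies in $\fr{m}_1$ or $\fr{m}_2$, so that \eqref{triangle2} gives exactly $\alpha_1$ or $\alpha_2$ times the $Q$-norm, and that the sign choice in $s_k$ leaves $|s_k|=\sqrt{|z_k|}$ unchanged. The constant is uniform in $\alpha_I$ and $\delta$ because it involves only $N$ and the norms of the fixed vectors $A^k_l,B^k_l$.
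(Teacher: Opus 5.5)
Your proposal is correct and is essentially the paper's own proof. The paper likewise bounds each commutator block $\pi(H^k(s,t))$ by $NM_0(2\alpha_1|s|+2\alpha_2|t|)$ using \eqref{triangle} and \eqref{triangle2}, substitutes $s=\op{sgn}(z_k)\sqrt{|z_k|}$ and $t=\delta\sqrt{|z_k|}$, and sums over $k$ with $C:=4NM_0$, exactly as you do.
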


\begin{proof} We set $M_0:=\max\{\|A^k_l\|,\|B^k_l\| : 1\leq k\leq d_3, \ 1\leq l\leq N\}$. By relations \eqref{triangle} and \eqref{triangle2}, for each $k=1,\dots ,d_3$, we have 
 
 \[ d_{\alpha_I}\big(o,\pi(H^k(s,t))\big)\leq NM_0(2\alpha_1|s|+2\alpha_2|t|). \]

 \noindent Putting $s:=\op{sgn}(z_k)\sqrt{|z_k|}$ and $t:=\delta \sqrt{|z_k|}$, the above relation yields

\[ d_{\alpha_I}\big(o,\pi(H^k(\op{sgn}(z_k)\sqrt{|z_k|},\delta \sqrt{|z_k|}))\big)\leq 2NM_0(\alpha_1+\delta\alpha_2) \sqrt{|z_k|}, \]

\noindent for all $k=1,\dots, d_3$. Putting $C:=4NM_0$, the lemma follows from the definition of $F^{\delta}$ and relations \eqref{triangle} and \eqref{triangle2}. \end{proof}

The following proposition shows the existence of a common neighborhood on which the maps $F^{\delta}$, $\delta\in(0,1]$, are $\mathcal{C}^1$-diffeomorphisms, with a uniform lower bound for their Jacobians. Along with Lemma \ref{distancelemma}, it will yield Proposition \ref{HeisenbergLower}.

\begin{prop}\label{Vprime} There exists an open ball $V$ with respect to $\left.Q\right|_{\fr{m}\times \fr{m}}$, centered at $0\in \fr{m}$, such that for all $\delta\in (0,1]$, the restriction $\left.F^{\delta}\right|_{V}$ is a $\mathcal{C}^1$ diffeomorphism onto its image and $J_{F^{\delta}}\geq\frac{1}{2} \delta^{d_3}$ on $V$. As a result, it holds 
 
 \[ \mu_0(F^{\delta}(S))\geq \frac{1}{2}\delta^{d_3} m(S),\]    

\noindent for any measurable $S\subseteq V$.\end{prop}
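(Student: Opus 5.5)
The plan is to show that, up to a fixed chart and a $\delta$-independent error, $F^{\delta}$ is the chart composed with the linear scaling $D_\delta:=\op{Id}_{\fr{m}_1}\oplus\op{Id}_{\fr{m}_2}\oplus\delta\op{Id}_{\fr{m}_3}$. Then $J_{F^\delta}\approx\delta^{d_3}$, and injectivity follows from a uniform perturbation-of-identity argument. The chart is the mixed-kind map $\chi(X,Y,W):=\pi(\exp X\exp Y\exp W)$ with $X\in\fr{m}_1,Y\in\fr{m}_2,W\in\fr{m}_3$; this is $\Phi$ of Section \ref{sectioncoordinate}. Shrinking $U$, we have $J_\chi\geq 1-\eta$ for any prescribed $\eta>0$, since $J_\chi(0)=1$.

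\textbf{Step 1 (the commutators).} Write $H^k(s,t)=\exp(W^k(s,t))$ near $(0,0)$ with $W^k$ real-analytic and $\fr{g}$-valued. Since $H^k(0,t)=H^k(s,0)=e$, the function $W^k$ vanishes on both axes, so $W^k(s,t)=st\,\rho^k(s,t)$ with $\rho^k$ smooth. The Baker--Campbell--Hausdorff formula and Lemma \ref{generator} give $\rho^k(0,0)=\sum_l[A^k_l,B^k_l]=E_3^k$. Substituting $s=\op{sgn}(z)\sqrt{|z|}$, $t=\delta\sqrt{|z|}$ gives $st=\delta z$, hence
\[ W^k=\delta z\,\rho^k\big(\op{sgn}(z)\sqrt{|z|},\delta\sqrt{|z|}\big). \]
Differentiating in $z$ yields $\delta\big(\rho^k+O(|z|\cdot|z|^{-1/2}\|d\rho^k\|)\big)=\delta\big(E_3^k+O(\sqrt{|z|})\big)$. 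This is continuous, including at $z=0$, and the $O(\cdot)$ constants are independent of $\delta\in(0,1]$ because $\delta$ enters only through bounded arguments. Thus $z\mapsto W^k$ is $\mathcal{C}^1$ and equals $\delta\cdot(\text{a }\mathcal{C}^1\text{ map with derivative }E_3^k+o(1))$, uniformly in $\delta$. Multiplying the $d_3$ factors by BCH, we get $\prod_k H^k=\exp(\delta\,\Theta_\delta(Z))$. Here $\Theta_\delta$ is $\mathcal{C}^1$, $\Theta_\delta(0)=0$, and $d\Theta_\delta=\iota_3+o(1)$ uniformly in $\delta$, where $\iota_3:\fr{m}_3\hookrightarrow\fr{g}$ is the inclusion.

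\textbf{Step 2 (removing the $\fr{k}$-part and absorbing into $\chi$).} I write $\exp(\delta\Theta)=\exp(W')k$ with $k\in K$ and $W'\in\fr{m}$ depending smoothly on $\delta\Theta$. Then I rewrite $\exp X\exp Y\exp(W')$ as $\exp X'\exp Y'\exp W_3'\cdot k'$ with $k'\in K$, using the inverse of the mixed chart on $G$ near $e$. This gives $F^\delta(X,Y,Z)=\chi(X',Y',W_3')$, where the following hold.
\begin{itemize}
\item $X'-X$ and $Y'-Y$ are $\mathcal{C}^1$ functions whose $Z$-derivatives are $O(\delta)$ and whose $(X,Y)$-derivatives are small near $0$.
\item $W_3'=\delta\big(Z+R_\delta(X,Y,Z)\big)$ with $dR_\delta=o(1)$ near $0$, uniformly in $\delta$.
\end{itemize}
The key point for the second item is that every term in $W_3'$ carries a factor $\delta\Theta_\delta$, so it can be divided by $\delta$ with $\mathcal{C}^1$ control. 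Setting $T_\delta:=D_\delta^{-1}\circ\chi^{-1}\circ F^\delta$, we therefore get $dT_\delta=\op{Id}+E_\delta$ with $\|E_\delta\|\leq\varepsilon(\text{radius})\to0$, uniformly in $\delta\in(0,1]$.

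\textbf{Step 3 (conclusion).} Choose the $Q$-ball $V$ so that $\|E_\delta\|\leq 1/4$ on $V$. On the convex set $V$, the estimate $\|T_\delta(P)-T_\delta(P')-(P-P')\|\leq\frac14\|P-P'\|$ shows that $T_\delta$ is injective. It is a local $\mathcal{C}^1$ diffeomorphism by the inverse function theorem, so $F^\delta=\chi\circ D_\delta\circ T_\delta$ is a $\mathcal{C}^1$ diffeomorphism onto its image. Its Jacobian is
\[ J_{F^\delta}=J_\chi\cdot\delta^{d_3}\cdot\det(\op{Id}+E_\delta)\geq(1-\eta)(1-c\varepsilon)\delta^{d_3}\geq\tfrac12\delta^{d_3}, \]
after shrinking $V$ further. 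The measure estimate $\mu_0(F^\delta(S))\geq\frac12\delta^{d_3}m(S)$ then follows from the definition of $J_{F^\delta}$ in Section \ref{Jacobian}.

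\textbf{Main obstacle.} I expect the main obstacle to be Step 1 together with the $\delta$-uniformity in Step 2. The map $z\mapsto\sqrt{|z|}$ is not $\mathcal{C}^1$, and only the factorization $W^k=st\,\rho^k$ (the commutator is trivial on the axes) makes the composite $\mathcal{C}^1$ at $z=0$. One must then track carefully that the scaled error $D_\delta^{-1}(\cdot)$ does not blow up as $\delta\to0$. This requires that every $\fr{m}_3$-component produced by $Z$ carries a factor $\delta$. I would verify this by writing all BCH terms explicitly as products with $\delta\Theta_\delta(Z)$.
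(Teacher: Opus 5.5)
Your proposal is correct, and its overall strategy matches the paper's. Both work in the fixed mixed chart $\Phi$, remove the $\delta$-degeneracy in the $\mathfrak{m}_3$-direction by a linear rescaling, show that the rescaled derivative is uniformly close to $\op{Id}$, and then conclude with the mean value theorem on a convex set and a Jacobian of size $\approx\delta^{d_3}$.

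The real difference is where you rescale.
\begin{itemize}
\item The paper precomposes: $\tilde G^{\delta}=\Phi^{-1}\circ F^{\delta}\circ\lambda_\delta$ on $\lambda_\delta^{-1}(V)$. This divides only the $z_k$-columns of $dF^\delta$ by $\delta$. The $X,Y$-columns are handled by the joint continuity of $dF^\delta_v$. The $z_k$-columns need only $\partial F^\delta/\partial z_k=\delta f_k(v,\delta)$ with $f_k$ jointly continuous and $f_k(0,\delta)=E_3^k$ (Lemma \ref{PartialSmoothness}), which comes directly from the single factorization $H^k=\exp(st\,h^k)$ of Lemma \ref{CBH}. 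Uniformity in $\delta$ then follows from compactness of $[0,1]$.
\item You postcompose: $T_\delta=D_\delta^{-1}\circ\Phi^{-1}\circ F^\delta$. This divides the entire $\mathfrak{m}_3$-row of $d(\Phi^{-1}\circ F^\delta)$ by $\delta$, including its $X$- and $Y$-derivatives.
\end{itemize}
That row-scaling is why you need the group-level normal form of Step 2: combine the $H^k$ by BCH, split off $K$, re-decompose in mixed coordinates, and extract the factor $\delta\Theta_\delta$ from $W_3'$. The step is valid. $W_3'$ is a smooth function of $(X,Y,w)$ vanishing at $w=0$, so $\delta^{-1}W_3'=N(X,Y,\delta\Theta_\delta(Z))[\Theta_\delta(Z)]$, and its derivative is uniformly close to $(0,0,\op{Id})$. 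But this is exactly the ``main obstacle'' you anticipate, and the paper's domain rescaling avoids it entirely.

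In exchange, your route gives an explicit factorization $F^\delta=\Phi\circ D_\delta\circ T_\delta$ with $T_\delta$ a near-identity map. The Jacobian bound then follows from a one-line chain rule, instead of the paper's continuity argument for $\omega_0$ evaluated on $(\partial_{x_i}F^\delta,\partial_{y_j}F^\delta,f_k)$.

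One small correction in Step 2. It is not enough that the $Z$-derivatives of $X'-X$ and $Y'-Y$ be $O(\delta)$: for $\delta$ near $1$ that does not make $\|E_\delta\|$ small. You need them to be small near $0$ uniformly in $\delta$. This does hold, because they vanish at the origin ($dF^\delta_0(Z)=\delta Z\in\mathfrak{m}_3$, so the $\mathfrak{m}_1$- and $\mathfrak{m}_2$-projections of the relevant linear map kill $\mathfrak{m}_3$) and are continuous in the point, uniformly in $\delta\in(0,1]$.
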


Before proving the above proposition, we establish sufficient regularity properties for $F^{\delta}$ through the following Lemma \ref{CBH} and Lemma \ref{PartialSmoothness}. 
 
 \begin{lemma}\label{CBH} There exists a neighborhood $U_0$ of $0$ in $\mathbb R^2$ such that, for all $k=1,\dots ,d_3$ and $(s,t)\in U_0$, it holds

\begin{equation*}H^k(s,t)=\exp(sth^k(s,t)), \end{equation*}

\noindent where $h^k:U_0\rightarrow \fr{g}$ is $\mathcal{C}^{\infty}$ and $h^k(0,0)=E_3^k\in \fr{m}_3$.
    
\end{lemma}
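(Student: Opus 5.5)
We first treat a single commutator and then take the product over $l$. For fixed $A\in\fr{m}_1$, $B\in\fr{m}_2$, let
\[ c(s,t):=\exp(-sA)\exp(-tB)\exp(sA)\exp(tB), \]
and choose a neighborhood $W$ of $0\in\fr{g}$ on which $\exp$ is a diffeomorphism with smooth inverse $\log$. By continuity there is a neighborhood $U_0$ of $(0,0)$ such that every partial product of the $4N$ exponential factors defining $H^k(s,t)$, for every $k=1,\dots,d_3$, lies in $\exp(W)$. On $U_0$ we may therefore set $\phi^k(s,t):=\log H^k(s,t)$. This map is $\mathcal{C}^\infty$ as a composition of smooth maps.

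The key observation is that $\phi^k$ vanishes on both axes. Indeed, $H^k(0,t)=\prod_l \exp(-tB^k_l)\exp(tB^k_l)=e$, and similarly $H^k(s,0)=e$. Hence $\phi^k(0,t)=\phi^k(s,0)=0$. By Hadamard's lemma applied twice, we can write
\[ \phi^k(s,t)=s\int_0^1\partial_s\phi^k(\sigma s,t)\,d\sigma, \]
and the integrand vanishes at $t=0$, because $\partial_s\phi^k(\cdot,0)=0$. Applying Hadamard's lemma again in $t$ gives
\[ \phi^k(s,t)=st\,h^k(s,t),\qquad h^k(s,t)=\int_0^1\!\!\int_0^1\partial_s\partial_t\phi^k(\sigma s,\tau t)\,d\sigma\,d\tau, \]
which is $\mathcal{C}^\infty$ on $U_0$, after shrinking $U_0$ to a convex set such as a small disc. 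Consequently $H^k(s,t)=\exp(st\,h^k(s,t))$ on $U_0$, and $h^k(0,0)=\partial_s\partial_t\phi^k(0,0)$.

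It remains to compute this mixed derivative, which is the only real content of the lemma. We use BCH to second order. Since $\log(\exp U\exp V)=U+V+\tfrac12[U,V]+O(|U|^3+|V|^3)$, we get
\[ \log c(s,t)=st[A,B]+O\big(|s|^3+|t|^3+|s|^2|t|+|s||t|^2\big). \]
For the mixed coefficient of $st$ it suffices to note that, in the Taylor expansion of $\log c$ at $(0,0)$, all pure $s$-terms and pure $t$-terms vanish by the axis identities, and the $st$ coefficient is $[A,B]$. This follows by expanding $\exp(-sA)\exp(-tB)\exp(sA)\exp(tB)$ in a matrix representation, or equivalently from $\partial_s\partial_t c(0,0)=[A,B]$. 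Then
\[ \phi^k(s,t)=\log\prod_l c_l(s,t) \]
has mixed derivative at the origin equal to $\sum_l\partial_s\partial_t\log c_l(0,0)$. The reason is that, to second order, $\log$ of a product of elements each of the form $\exp(O(st))$ is the sum of the logarithms plus brackets of order $(st)^2$. Therefore
\[ h^k(0,0)=\sum_{l=1}^N[A^k_l,B^k_l]=E^k_3 \]
by Lemma \ref{generator}.

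The main point requiring care is this last step, namely justifying that the cross terms in the product of the $N$ commutators do not contribute to $\partial_s\partial_t$ at the origin. To settle it cleanly, I would write $P(s,t)=\prod_l c_l(s,t)$ and compute $\partial_t P(s,0)$ and then $\partial_s$ at $0$ directly. Since $c_l(s,0)=e$, the product rule gives $\partial_tP(s,0)=\sum_l\partial_tc_l(s,0)$ in $T_eG$. Differentiating in $s$ at $0$ then yields $\sum_l\partial_s\partial_t c_l(0,0)=\sum_l[A^k_l,B^k_l]$. Because $d\log_e=\mathrm{Id}$ and $\phi^k$ vanishes on the axes, this second derivative equals $\partial_s\partial_t\phi^k(0,0)$.
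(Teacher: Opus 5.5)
Your proof is correct, and it takes a genuinely different route from the paper's.

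\textbf{The paper's route.} It expands each commutator by the Campbell--Baker--Hausdorff formula as $\exp(st[A^k_l,B^k_l]+R^k_l(s,t))$, where every term of $R^k_l$ carries a coefficient $s^nt^m$ with $n,m\geq 1$ and $n+m\geq 3$. It then applies CBH iteratively to the product over $l$ to get $\exp(stE^k_3+R^k(s,t))$ with the same divisibility. Finally it factors $st$ out of the resulting convergent power series.

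\textbf{Your route.} You treat $\phi^k=\log H^k$ only as a smooth map. The factor $st$ comes directly from $H^k(0,t)=H^k(s,0)=e$ via a two-fold Hadamard lemma, which also gives an explicit integral formula for $h^k$. You then identify $h^k(0,0)=\partial_s\partial_t\phi^k(0,0)$ by a first-order product rule along $t=0$. There every factor equals $e$, so $\partial_t\phi^k(s,0)=\partial_tP(s,0)=\sum_l\partial_tc_l(s,0)$. Combined with $\partial_s\partial_tc(0,0)=[A,B]$, this finishes the computation. The cleanest way to see the last identity is $\partial_tc(s,0)=B-\op{Ad}_{\exp(-sA)}B$.

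\textbf{Comparison.} Your argument never tracks the combinatorics or convergence of iterated CBH. In particular, it makes rigorous the paper's rather terse claim that cross terms between different commutators contribute only terms divisible by $st$. It also needs only $\mathcal{C}^{\infty}$ regularity, which is all that Lemma \ref{PartialSmoothness} later uses. The paper's route gives slightly more: a real-analytic $h^k$ and a remainder of total degree at least $3$. That extra information is not used anywhere.

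\textbf{Two small points.} Your heuristic middle paragraph about the logarithm of a product ``to second order'' is superseded by your final computation and can be dropped. For the Hadamard step you need $(\sigma s,\tau t)\in U_0$ for all $\sigma,\tau\in[0,1]$. A disc or square centered at $0$ has this property, but an arbitrary convex neighborhood need not, so ``convex'' alone is not the right condition.
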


\begin{proof}Set 

\[ C^k_l(s,t):=\exp{(-sA^k_l)}\exp{(-tB^k_l)}\exp{(sA^k_l)}\exp{(tB^k_l)}, \]
 
 \noindent so that $H^k(s,t)=\prod_{l=1}^N{C_l^k(s,t)}$.  The Campbell-Baker-Hausdorff formula yields

\begin{eqnarray*}C^k_l(s,t)&=&\exp{\big(st[A^k_l,B^k_l]-\frac{s^2t}{2}[A^k_l,[A^k_l,B^k_l]]-\frac{st^2}{2}[B^k_l,[A^k_l,B^k_l]]+\cdots)\big)}\\ \\
&=&\exp{\big(st[A^k_l,B^k_l]+R^k_l(s,t) \big)}, \end{eqnarray*}

\noindent for $(s,t)$ in a sufficiently small neighborhood of $0\in \mathbb R^2$, where each non-zero term in $R^k_l(s,t)$ has a coefficient $s^nt^m$ with $m,n\geq 1$ and $m+n\geq 3$. By shrinking the neighborhood if necessary and multiplying the above terms for $l=1,\dots,N$, an iterated use of the Campbell-Baker-Hausdorff formula implies that there exists a neighborhood $U_k$ of $0 \in \mathbb R^2$ such that, for all $(s,t)\in U_k$, it holds  

\begin{eqnarray*}H^k(s,t)&=&\exp{\big(st\sum_{l=1}^N{[A^k_l,B^k_l]}+R^k(s,t)\big)}\\ \\
&=&\exp{\big(stE_3^k+R^k(s,t)\big)}. \end{eqnarray*}

\noindent Here, the second equality follows from Lemma \ref{generator}. Moreover, it can be verified by the iterated use of the Campbell-Baker-Hausdorff formula that any non-zero term in $R^k(s,t)$ also has a coefficient $s^nt^m$ with $m,n\geq 1$ and $m+n\geq 3$. Therefore, after shrinking $U_k$ if necessary, and by factoring out the term $st$ in the argument of the exponential, we arrive to an equation of the form

\begin{equation*}H^k(s,t)=\exp{(sth^k(s,t))}, \end{equation*}

\noindent where $h^k(s,t)$ is a convergent power series.  Thus the map $ h^k:U_k\rightarrow \fr{g}$, with $(s,t)\mapsto h^k(s,t)$, is $\mathcal{C}^{\infty}$, and $h^k(0,0)=E_3^k$. Setting $U_0:=\cap_{k=1}^{d_3}{U_k}$, we have concluded the desired statement.  \end{proof}

 In the sequel, we establish the regularity properties of $F^{\delta}$ that we need. 

\begin{lemma}\label{PartialSmoothness} There exists an open ball $V$ with respect to the inner product $\left.Q\right|_{\fr{m}\times \fr{m}}$, centered at $0$ in $\fr{m}$, such that the following are true:\\

\noindent 1. The map $F^{\delta}:\fr{m}\rightarrow G/K$ is $\mathcal{C}^1$ on $V$, for every $\delta\in [0,1]$. \\
\noindent 2. It holds

\[ dF^{\delta}_0(X+Y+Z)=X+Y+\delta Z, \ \ X\in \fr{m}_1, \ Y\in \fr{m}_2, \ Z\in \fr{m}_3.\]

\noindent 3. There exist jointly continuous maps $f_k:V\times [0,1]\rightarrow T(G/K)$, $k=1,\dots,d_3$, such that

\[ \frac{\partial F^{\delta}}{\partial z_k}(v)=\delta f_k(v,\delta) \ \ \makebox{and} \ \ f_k(0,\delta)=E_3^k, \ \ \makebox{for all} \ \ v\in V, \ \ k=1,\dots,d_3, \ \delta\in [0,1].\]

\noindent 4. The map 
  
  \[ V\times [0,1]\times \fr{m}\rightarrow T(G/K), \ \ (v,\delta,W)\mapsto dF^{\delta}_v(W),\]
 \noindent is continuous.

  \end{lemma}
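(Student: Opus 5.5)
The plan is to reduce everything to the smooth structure supplied by Lemma \ref{CBH}. For $(s,t)\in U_0$ we have $H^k(s,t)=\exp(st\,h^k(s,t))$. With $s=\op{sgn}(z_k)\sqrt{|z_k|}$ and $t=\delta\sqrt{|z_k|}$ the product is $st=\delta z_k$. Hence
\[ H^k(\op{sgn}(z_k)\sqrt{|z_k|},\delta\sqrt{|z_k|})=\exp\big(\delta z_k\, h^k(\op{sgn}(z_k)\sqrt{|z_k|},\delta\sqrt{|z_k|})\big). \]
This is the key identity: the nonsmooth square roots occur only inside $h^k$, and $h^k$ is multiplied by $\delta z_k$.

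Define $g_k(z,\delta):=h^k(\op{sgn}(z)\sqrt{|z|},\delta\sqrt{|z|})$. This map is continuous on a neighborhood of $0$ in $\mathbb R\times[0,1]$. Choose the ball $V$ small enough that $(\op{sgn}(z_k)\sqrt{|z_k|},\delta\sqrt{|z_k|})\in U_0$ for all $\delta\in[0,1]$.

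The first step is to show that $\phi_k(z,\delta):=\delta z\,g_k(z,\delta)$ is $\mathcal C^1$ in $z$, with derivative jointly continuous in $(z,\delta)$.
\begin{itemize}
\item For $z\neq 0$, direct differentiation gives
\[ \partial_z\phi_k=\delta g_k+\delta z\,\partial_z g_k. \]
The second term involves $\partial_z\sqrt{|z|}=\pm\frac{1}{2\sqrt{|z|}}$ times bounded derivatives of $h^k$ (the $t$-derivative also carries a factor $\delta\le1$). So $z\,\partial_z g_k=O(\sqrt{|z|})$ uniformly in $\delta$.
\item At $z=0$, the difference quotient is $\delta g_k(z,\delta)\to\delta h^k(0,0)=\delta E_3^k$.
\end{itemize}
Therefore $\partial_z\phi_k=\delta\,\psi_k(z,\delta)$, where $\psi_k(z,\delta):=g_k+z\,\partial_z g_k$ (with $\psi_k(0,\delta)=E_3^k$) is jointly continuous and independent of the other variables.

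The second step assembles $F^\delta$ as a composition:
\[ F^\delta(X,Y,Z)=\pi\Big(\exp X\exp Y\prod_k\exp(\phi_k(z_k,\delta))\Big). \]
This is a smooth map $\fr{m}_1\times\fr{m}_2\times\fr{g}^{d_3}\to G/K$, applied to $(X,Y,\phi_1(z_1,\delta),\dots,\phi_{d_3}(z_{d_3},\delta))$. By the chain rule, $F^\delta$ is $\mathcal C^1$ on $V$, which gives part 1. The differential depends continuously on $(v,\delta,W)$, because the smooth outer map has continuous derivatives and the $\partial_z\phi_k$ are jointly continuous; this gives part 4. The partial derivative $\partial F^\delta/\partial z_k$ equals the differential of the smooth outer map applied to $\partial_z\phi_k=\delta\psi_k$. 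Setting $f_k(v,\delta)$ to be that differential applied to $\psi_k$ gives $\partial F^\delta/\partial z_k=\delta f_k$, and $f_k$ is jointly continuous. At $v=0$ we have $\phi_k=0$, so the outer differential at the identity is just $d\pi_e$. Hence $f_k(0,\delta)=d\pi_e(E_3^k)=E_3^k$ under the identification, which gives part 3. For part 2, the derivatives in $X$ and $Y$ at $0$ are $d\pi_e$ of $X$ and $Y$, that is $X+Y$. The $Z$-derivative is $\delta\sum z_kE_3^k=\delta Z$ by part 3.

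The main obstacle is the uniform bound $z\,\partial_z g_k=O(\sqrt{|z|})$, which gives continuity of $\partial_z\phi_k$ at $z=0$ uniformly in $\delta\in[0,1]$. This is where the $\sqrt{|z|}$ parametrization must be handled carefully. I would verify it using bounds on the first derivatives of $h^k$ over a compact subset of $U_0$, and the continuity of $\psi_k$ across $z=0$ then follows from $g_k(z,\delta)\to E_3^k$.
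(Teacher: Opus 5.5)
Your proposal is correct and follows essentially the same route as the paper. Your $\phi_k=\delta z\,g_k$ is the paper's $\delta a^k(z_k,\delta)$, and your $\psi_k$ is its $\partial a^k/\partial z_k$, obtained with the same $O(\sqrt{|z|})$ estimate and the same difference-quotient computation at $z=0$. The only difference is packaging: you use the single smooth map $(X,Y,W_1,\dots,W_{d_3})\mapsto \pi(\exp X\exp Y\exp W_1\cdots\exp W_{d_3})$ with the chain rule, whereas the paper writes $\partial F^{\delta}/\partial z_k$ through the translations $L_{P_k}$, $R_{Q_k}$ and deduces parts 1 and 4 from the joint continuity of all partial derivatives.
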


\begin{proof}   Since each function $z_k\mapsto \op{sgn}(z_k)\sqrt{|z_k|}$ is continuous on $\mathbb R$, and $H^k$ is smooth, $k=1,\dots, d_3$, the partial derivatives of $F$ with respect to $x_i,y_j$, $i=1,\dots, d_1$ and $j=1,\dots, d_2$, exist and are jointly continuous in $\big((x_i)_{i=1}^{d_1},(y_j)_{j=1}^{d_2},Z,\delta\big)$.   Thus the same holds for the corresponding partial derivatives of $F^{\delta}$. Moreover, observing that $H^k(0,0)=e$, $k=1,\dots,d_3$, it is not hard to verify that

\begin{equation}\label{PartialXY}\left.\frac{\partial F^{\delta}}{\partial x_i}\right|_{0}=d\pi_e(E_1^i)=E_1^i \ \  \makebox{and}  \ \ \left.\frac{\partial F^{\delta}}{\partial y_j}\right|_{0}=d\pi_e(E_2^j)=E_2^j, \end{equation}
 
 \noindent for $i=1,\dots, d_1$, $j=1,\dots, d_2$, under the identification $T_o(G/K)=\fr{m}$. 
  
  Next we calculate the partial derivatives with respect to $z_k$, $k=1,\dots,d_3$.  Let $U_0$ be as in Lemma \ref{CBH}. We may choose $\epsilon>0$ such that

\[ (\op{sgn}(z)\sqrt{|z|},\delta\sqrt{|z|})\in U_0,\]

 \noindent for all $z\in (-\epsilon,\epsilon)$ and $\delta\in [0,1]$. Hence by Lemma \ref{CBH}, we have
 
 \begin{equation*} H^k(\op{sgn}(z_k)\sqrt{|z_k|},\delta\sqrt{|z_k|})=\exp{\big(\delta z_kh^k(\op{sgn}(z_k)\sqrt{|z_k|},\delta\sqrt{|z_k|})\big)}, \end{equation*}

 \noindent for all $z_k\in (-\epsilon, \epsilon)$, $k=1,\dots,d_3$, and $\delta\in [0,1]$. We set 
 
 \begin{equation*} a^k(z_k,\delta):= z_kh^k(\op{sgn}(z_k)\sqrt{|z_k|},\delta\sqrt{|z_k|}), \end{equation*}
 
 \noindent so that
 
 \begin{equation}\label{Hka}H^k(\op{sgn}(z_k)\sqrt{|z_k|},\delta\sqrt{|z_k|})=\exp{(\delta a^k(z_k,\delta))}.\end{equation}

 \noindent Let $V$ be an open ball with respect to the distance induced by the inner product $\left.Q\right|_{\fr{m}\times \fr{m}}$, centered at $0$ in $\fr{m}$, such that

 \[ V\subset (-\epsilon,\epsilon)^n. \]
 
 \noindent For $0\neq z_k\in (-\epsilon,\epsilon)$, we have 

 \begin{eqnarray*}\frac{\partial a^k}{\partial z_k}(z_k,\delta)&=& h^k(\op{sgn}(z_k)\sqrt{|z_k|},\delta\sqrt{|z_k|})\\ \\
 &+&\frac{1}{2}\op{sgn}(z_k)\sqrt{|z_k|}\left.\frac{\partial h^k}{\partial s}\right|_{(s,t)=(\op{sgn}(z_k)\sqrt{|z_k|},\delta\sqrt{|z_k|})}\\ \\
 &+&\frac{\delta}{2}\sqrt{|z_k|}\left.\frac{\partial h^k}{\partial t}\right|_{(s,t)=(\op{sgn}(z_k)\sqrt{|z_k|},\delta\sqrt{|z_k|})}.\end{eqnarray*} 

\noindent Since $h^k$ is $\mathcal{C}^{\infty}$ near $(0,0)$ (Lemma \ref{CBH}), the last two terms tend to zero as $z_k$ tends to zero, uniformly for $\delta\in [0,1]$, while the first term tends to $h^k(0,0)= E_3^k$, uniformly for $\delta\in [0,1]$. Hence,

\begin{equation*}\lim_{z_k\rightarrow 0}{\frac{\partial a^k}{\partial z_k}(z_k,\delta)}=E_3^k,\end{equation*}

\noindent uniformly for $\delta\in[0,1]$. On the other hand, from the definition of $a^k$, we have

\[\frac{a^k(z_k,\delta)-a^k(0,\delta)}{z_k}=h^k(\op{sgn}(z_k)\sqrt{|z_k|},\delta\sqrt{|z_k|}),\] 

\noindent and thus

\begin{equation}\label{E3Deriv}\left.\frac{\partial a^k}{\partial z_k}\right|_{(0,\delta)}=E_3^k, \ \ \delta \in [0,1], \ \ k=1,\dots,d_3.\end{equation}

\noindent Hence $\frac{\partial a^k}{\partial z_k}$ is jointly continuous on $(-\epsilon,\epsilon) \times [0,1]$. Therefore, Equation \eqref{Hka} yields

\begin{equation}\label{partial1}\frac{\partial H^k(\op{sgn}(z_k)\sqrt{|z_k|},\delta\sqrt{|z_k|})}{\partial z_k}= d\exp_{\delta a^k(z_k,\delta)}\bigg(\frac{\partial (\delta a^k)}{\partial z_k}\bigg)=\delta d\exp_{\delta a^k(z_k,\delta)}\bigg(\frac{\partial a^k}{\partial z_k}\bigg). \end{equation}

\noindent In particular, the last term is a jointly continuous function of $z_k$ and $\delta$. Recall the left and right translations $L_x,R_x$ for $x\in G$, and set 

\[ P_k:=\exp{X}\exp{Y}\prod_{m=1}^{k-1}\exp{(\delta a^m(z_m,\delta))}, \ \  \makebox{and} \ \ Q_k:=\prod_{l=k+1}^{d_3}\exp{(\delta a^l(z_l,\delta))}.\]

\noindent By Equations \eqref{Hka}, \eqref{partial1} and the definition of $F^{\delta}$, we obtain

 \begin{equation}\label{PartialZZ}\frac{\partial F^{\delta}}{\partial z_k}=d\big(\pi\circ L_{P_k}\circ R_{Q_k}\big)_{\exp{(\delta a^k(z_k,\delta))}}\bigg(\delta d\exp_{\delta a^k(z_k,\delta)}\bigg(\frac{\partial a^k}{\partial z_k}\bigg)\bigg)=\delta f_k(X,Y,Z,\delta),\end{equation}

\noindent where we have set 

\[f_k(X,Y,Z,\delta):=d\big(\pi\circ L_{P_k}\circ R_{Q_k}\circ \exp \big)_{\delta a^k(z_k,\delta)}\bigg(\frac{\partial a^k}{\partial z_k}\bigg). \]

\noindent Since $P_k,Q_k,a^k$ and $\frac{\partial a^k}{\partial z_k}$ are jointly continuous in $V\times [0,1]$, and the maps $\pi,L_x,R_x,\exp$ are smooth, it follows that the maps $f_k$ are jointly continuous in $V\times [0,1]$. Moreover, at $(X,Y,Z)=0$ we have $P_k=Q_k=e$ and $a^k(0,\delta)=0$.  Along with relation \eqref{E3Deriv}, the definition of $f_k$, and under the identification $T_o(G/K)=\fr{m}$, we obtain

\[
f_k(0,\delta)=d\pi_e\big(d\exp_0(E_3^k)\big)=d\pi_e(E_3^k)=E_3^k, \ \ k=1,\dots,d_3.
\]

\noindent The above relation, along with equations \eqref{PartialXY} and \eqref{PartialZZ}, yields parts 2. and 3. Finally, since all partial derivatives of $F^{\delta}$ exist and depend continuously on $(v,\delta)$, $v\in V, \delta\in [0,1]$, parts 1. and 4. follow. \end{proof}

We proceed to prove Proposition \ref{Vprime}. \\

\noindent \emph{Proof of Proposition \ref{Vprime}.} Let $V$ be the open ball given by Lemma \ref{PartialSmoothness} and let $\Phi:U\subset \fr{m}\rightarrow \Phi(U)\subset G/K$ be the coordinates introduced in Section \ref{sectioncoordinate}.  Since $F^{\delta}$ is jointly continuous, $F^{\delta}(0,0,0)=o$ and $[0,1]$ is compact, by further shrinking the ball $V$ if necessary, we may assume that 

\[F^{\delta}(V)\subseteq \Phi(U), \ \ \makebox{for all} \ \ \delta\in [0,1]. \]

\noindent For $\delta\in [0,1]$, we consider the function $G^{\delta}:V\rightarrow U$ with 

\[ G^{\delta}:=\Phi^{-1}\circ F^{\delta}.\]

\noindent Since $d\Phi_0=\op{Id}_{\fr{m}}$, part 1. of Lemma \ref{PartialSmoothness} yields 

\[ dG^{\delta}_0(X+Y+Z)=X+Y+\delta Z,  \ \ X\in \fr{m}_1, \ Y\in \fr{m}_2, \ Z\in \fr{m}_3, \ \delta \in [0,1]. \]

\noindent For $\delta\in (0,1]$, consider the linear map $\lambda_\delta:\fr{m}\rightarrow\fr{m}$ defined by

\[ \lambda_\delta(X+Y+Z)=X+Y+\delta^{-1}Z, \ \ X\in \fr{m}_1, \ Y\in \fr{m}_2, \ Z\in \fr{m}_3. \]

\noindent Then $\lambda_\delta^{-1}(V)\subseteq V$, given that $V$ is an open ball and $\delta \in (0,1]$. Consider the map $\tilde{G}^{\delta}:\lambda_\delta^{-1}(V)\rightarrow U$, with 

\[ \tilde{G}^{\delta}:=G^{\delta}\circ \lambda_\delta=\Phi^{-1}\circ F^{\delta}\circ \lambda_\delta, \ \ \delta \in (0,1]. \]

\noindent Observe that $d\tilde{G}^{\delta}_0=\op{Id}_{\fr{m}}$.  We claim that, after shrinking the ball $V$ if necessary, it holds 

\begin{equation}\label{ClaimGd}\|d\tilde{G}^{\delta}_u-\op{Id}_{\fr{m}}\|_{op}<\frac{1}{2}, \end{equation}

\noindent for all $u\in \lambda_\delta^{-1}(V)$ and $\delta\in (0,1]$, where $\| \cdot \|_{op}$ denotes the operator norm in $\fr{gl}(\fr{m})$.

To prove our claim, let $u=\lambda_\delta^{-1}(v)$, $v\in V$, and  $W=X+Y+Z\in \fr{m}$, where $X\in \fr{m}_1$, $Y\in \fr{m}_2$ and $Z=(z_k)_{k=1}^{d_3}\in \fr{m}_3$. Using the chain rule and part 3. of Lemma \ref{PartialSmoothness}, we obtain

\[ d\tilde{G}^{\delta}_uW=d\Phi^{-1}_{F^{\delta}(v)}\big(dF^{\delta}_v(X+Y)+\sum_{k=1}^{d_3}{z_k f_k(v,\delta)}\big).\]

\noindent In view of the above relation, we define the map $A:V\times [0,1]\rightarrow \fr{gl}(\fr{m})$ by 

\[ A(v,\delta)W:=d\Phi^{-1}_{F^{\delta}(v)}\big(dF^{\delta}_v(X+Y)+\sum_{k=1}^{d_3}{z_k f_k(v,\delta)}\big), \]

\noindent where $v\in V$, $\delta \in [0,1]$ and $W=X+Y+Z\in \fr{m}$.  The previous relation shows that $A(v,\delta)=d\tilde{G}^{\delta}_u$, where $u=\lambda_\delta^{-1}(v)$, for all $\delta \in (0,1]$. Moreover, by parts 1. and 4. of Lemma \ref{PartialSmoothness}, the map $A$ is continuous, and $A(0,\delta)=d\Phi^{-1}_o\circ \left.d\pi_e\right|_{\fr{m}}=\op{Id}_{\fr{m}}$ for all $\delta \in [0,1]$. Since $[0,1]$ is compact, by shrinking the ball $V$ if necessary, we obtain

\[ \|A(v,\delta)-\op{Id}_{\fr{m}}\|_{op}<\frac{1}{2}, \ \ \makebox{i.e.,} \ \ \|d\tilde{G}^{\delta}_u-\op{Id}_{\fr{m}}\|_{op}<\frac{1}{2}, \]

\noindent for all $\delta\in (0,1]$ and $u\in \lambda_\delta^{-1}(V)\subseteq V$, which proves relation \eqref{ClaimGd}.

Since the open ball $V$ is convex and $\lambda_\delta^{-1}$ is linear, then $\lambda_\delta^{-1}(V)$ is also convex and thus the mean value theorem (see for example \cite{Rudin}, Theorem 9.19) implies

\[  \|\tilde{G}^{\delta}(u_1)-\tilde{G}^{\delta}(u_2)-(u_1-u_2)\|\leq \frac{1}{2}\|u_1-u_2\|, \ \ u_1,u_2\in \lambda_\delta^{-1}(V).   \]

\noindent Therefore,  

\[ \|\tilde{G}^{\delta}(u_1)-\tilde{G}^{\delta}(u_2)\|\geq \frac{1}{2}\|u_1-u_2\|,  \]

\noindent which shows that $\tilde{G}^{\delta}$ is injective for all $\delta \in(0,1]$.  Since $\tilde{G}^{\delta}=\Phi^{-1}\circ F^{\delta}\circ \lambda_\delta$ and $\lambda_\delta$ is an isomorphism from $\lambda_\delta^{-1}(V)$ onto $V$, $\delta \in (0,1]$, we deduce that $F^{\delta}$ is injective on $V$ for all $\delta \in (0,1]$. 

Now we estimate the Jacobian of $F^{\delta}$.  Consider the Riemannian volume form $\omega_0$ on $\Phi(U)\subset G/K$ with respect to the reference metric $\left.Q\right|_{\fr{m}\times \fr{m}}$, and the function $j:V\times [0,1]\rightarrow \mathbb R$ with

\[ j(X,Y,Z,\delta)=\bigg|\omega_0\bigg( \big(\frac{\partial F^{\delta}}{\partial x_i}\big)_{i=1}^{d_1}, \big(\frac{\partial F^{\delta}}{\partial y_j}\big)_{j=1}^{d_2}, (f_k)_{k=1}^{d_3}\bigg)\bigg|. \]

\noindent By Lemma \ref{PartialSmoothness}, the function $j$ is jointly continuous, and $j(0,0,0,\delta)=1$ for all $\delta\in [0,1]$.  Hence, after shrinking the ball $V$ if necessary, and since $[0,1]$ is compact, we have

\[ j(X,Y,Z,\delta)\geq \frac{1}{2}, \ \ (X,Y,Z,\delta)\in V\times [0,1]. \]

\noindent For $\delta\in (0,1]$, Lemma \ref{PartialSmoothness} yields $\frac{\partial F^{\delta}}{\partial z_k}=\delta f_k$, $k=1,\dots,d_3$, and hence 

\[ \bigg|\omega_0\bigg( \big(\frac{\partial F^{\delta}}{\partial x_i}\big)_{i=1}^{d_1}, \big(\frac{\partial F^{\delta}}{\partial y_j}\big)_{j=1}^{d_2}, \big(\frac{\partial F^{\delta}}{\partial z_k}\big)_{k=1}^{d_3}\bigg)\bigg|=\delta^{d_3}j(X,Y,Z,\delta)\geq \frac{1}{2}\delta^{d_3}>0. \]

\noindent Therefore, $dF^{\delta}$ is non-singular on $V$ for all $\delta \in (0,1]$. Since $F^{\delta}$ is also injective on $V$ for all $\delta \in (0,1]$, the inverse function theorem implies that $\left.F^{\delta}\right|_{V}$ is a $\mathcal{C}^1$-diffeomorphism onto its image for all $\delta \in (0,1]$.  Thus the Jacobian $J_{F^{\delta}}$ is well-defined on $V$ in the sense of Section \ref{Jacobian}, while $J_{F^{\delta}}=\delta^{d_3}j\geq \frac{1}{2}\delta^{d_3}$ on $V$, for all $\delta \in (0,1]$.  We conclude that for any measurable subset $S\subseteq V$, it holds

\[ \mu_0(F^{\delta}(S))=\int_{S}{J_{F^{\delta}}dm}\geq \frac{1}{2}\delta^{d_3}m(S), \]

\noindent which completes the proof. \qed   \\
 
We are now ready to establish the lower bound of Proposition \ref{HeisenbergLower}. \\

\noindent \emph{Proof of Proposition \ref{HeisenbergLower}.} We choose $\epsilon\in (0,1)$ such that $[-\epsilon,\epsilon]^n\subset V$, where $V$ is the neighborhood of $0\in \fr{m}$ as in Proposition \ref{Vprime}. For $X=(x_i)_{i=1}^{d_1}\in \fr{m}_1$, $Y=(y_j)_{j=1}^{d_2}\in \fr{m}_2$ and $Z=(z_k)_{k=1}^{d_3}\in \fr{m}_3$, and putting $\delta:=\frac{\alpha_1}{\alpha_2}\in (0,1]$, Lemma \ref{distancelemma} asserts that there exists a positive constant $C$, depending only on the space $G/K$, such that 

  \begin{equation}\label{LemmaDistanceRelation} d_{\alpha_I}\big(o,F^{\frac{\alpha_1}{\alpha_2}}(X,Y,Z)\big)\leq \alpha_1\|X\|+\alpha_2\|Y\|+C\alpha_1\sum_{k=1}^{d_3}{\sqrt{|z_k|}}. \end{equation}

\noindent Assume that $0< r\leq \epsilon\alpha_1$.  We set $C^{\prime}:=2(\sqrt{d_1}+\sqrt{d_2}+Cd_3)$, and we consider the box

\[ S_r:=\bigg[-\frac{r}{C^{\prime}\alpha_1}, \frac{r}{C^{\prime}\alpha_1}\bigg]^{d_1}\times \bigg[-\frac{r}{C^{\prime}\alpha_2}, \frac{r}{C^{\prime}\alpha_2}\bigg]^{d_2}\times \bigg[-\big(\frac{r}{C^{\prime}\alpha_1}\big)^2, \big(\frac{r}{C^{\prime}\alpha_1}\big)^2\bigg]^{d_3}.\]

\noindent Since $r\leq \epsilon \alpha_1\leq \epsilon \alpha_2$ and $\epsilon \in (0,1)$, we have 

 \[ \frac{r}{C^{\prime}\alpha_1}\leq \epsilon, \ \ \frac{r}{C^{\prime}\alpha_2}\leq \epsilon \ \ \makebox{and}\ \ (\frac{r}{C^{\prime}\alpha_1})^2\leq \epsilon, \]
 
 \noindent and thus $S_r\subseteq [-\epsilon, \epsilon]^n$.  Moreover, 
 
 \[ m(S_r)=C^{\prime \prime}(\alpha_1^{d_1+2d_3}\alpha_2^{d_2})^{-1}r^{n+d_3},\ \ \makebox{with} \ \  C^{\prime \prime}:=\frac{2^n}{(C^{\prime})^{n+d_3}}.\]

\noindent Besides, for any $(X,Y,Z)\in S_r$, with $Z=(z_k)_{k=1}^{d_3}$, we have

\[ \|X\|\leq \frac{\sqrt{d_1}r}{C^{\prime}\alpha_1}, \ \ \|Y\|\leq \frac{\sqrt{d_2}r}{C^{\prime}\alpha_2} \ \ \makebox{and}\ \ \sum_{k=1}^{d_3}\sqrt{|z_k|}\leq \frac{d_3r}{C^{\prime}\alpha_1}. \]

\noindent Thus relation \eqref{LemmaDistanceRelation} yields

\[ d_{\alpha_I}\big(o,F^{\frac{\alpha_1}{\alpha_2}}(X,Y,Z)\big)\leq \frac{r}{C^{\prime}}\big(\sqrt{d_1}+\sqrt{d_2}+Cd_3\big)<r, \]

\noindent and hence $F^{\frac{\alpha_1}{\alpha_2}}(S_r)\subseteq B_{\alpha_I}(o,r)$. Along with Proposition \ref{Vprime}, we obtain
  
 \[ V_{\alpha_I}(r)=\mu_0(B_{\alpha_I}(o,r))\geq \mu_0(F^{\frac{\alpha_1}{\alpha_2}}(S_r))\geq \frac{1}{2}\big(\frac{\alpha_1}{\alpha_2}\big)^{d_3}m(S_r),\]  

\noindent that is,

\[ V_{\alpha_I}(r)\geq c(\alpha_1^{d_1+d_3}\alpha_2^{d_2+d_3})^{-1}r^{n+d_3}, \ \ 0< r\leq \epsilon\alpha_1, \]

\noindent where $c:= \frac{C^{\prime \prime}}{2}$ depends only on the space $G/K$.  Finally, for $\epsilon \alpha_1 \leq r\leq \alpha_1$, the monotonicity of the volume function yields

\[ V_{\alpha_I}(r)\geq V_{\alpha_I}(\epsilon\alpha_1)\geq c \epsilon^{n+d_3}\big(\frac{\alpha_1}{\alpha_2}\big)^{d_2+d_3}.\]

\noindent Given that $\alpha_1 \geq r$, the above relation yields
\[ V_{\alpha_I}(r)\geq c\epsilon^{n+d_3}(\alpha_1^{d_1+d_3}\alpha_2^{d_2+d_3})^{-1}r^{n+d_3}, \ \  \epsilon \alpha_1 \leq r\leq \alpha_1. \]

\noindent Taking $C_3:=c \min{\{1, \epsilon^{n+d_3}\}}$, it follows that
 
 \[ V_{\alpha_I}(r)\geq C_3(\alpha_1^{d_1+d_3}\alpha_2^{d_2+d_3})^{-1}r^{n+d_3}, \ \  0<r\leq \alpha_1, \]

 \noindent while multiplying by $\Lambda_{\alpha_I}$ yields the desired result. \qed

\subsection{Sub-Riemannian upper bound}\label{HeisenbergUpperSection}
In this subsection, we obtain the following analogous upper bound for the sub-Riemannian interval of the radius.

\begin{prop}\label{UpperHeisenbergProp} Let $(G/K,\alpha_I)$ be a homogeneous space satisfying Assumption \ref{Assumption}.  Then there exist positive constants $\eta\in (0,1)$ and $C_4$, depending only on the space $G/K$, such that

\begin{equation*}Vol_{\alpha_I}(r)\leq C_4\big(\frac{\alpha_3}{\alpha_1\alpha_2}\big)^{d_3}r^{n+d_3}, \ \ \frac{\alpha_1\alpha_2}{\alpha_3}\leq r \leq \eta \alpha_1, \end{equation*}

\noindent uniformly in $\alpha_1\leq \alpha_2\leq\alpha_3$.
\end{prop}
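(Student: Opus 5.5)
The plan is to show that, for $r$ in the stated range, the ball $B_{\alpha_I}(o,r)$ is contained in the image under a fixed coordinate chart of a box of the shape
\[
\Big[-\tfrac{Cr}{\alpha_1},\tfrac{Cr}{\alpha_1}\Big]^{d_1}\times\Big[-\tfrac{Cr}{\alpha_2},\tfrac{Cr}{\alpha_2}\Big]^{d_2}\times\Big[-\tfrac{C r^2}{\alpha_1\alpha_2},\tfrac{Cr^2}{\alpha_1\alpha_2}\Big]^{d_3}.
\]
Taking $\mu_0$ of this box with the Jacobian bounded above near $o$ gives $V_{\alpha_I}(r)\le C(\alpha_1^{d_1+d_3}\alpha_2^{d_2+d_3})^{-1}r^{n+d_3}$. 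Multiplying by $\Lambda_{\alpha_I}$ then yields $C(\alpha_3/(\alpha_1\alpha_2))^{d_3}r^{n+d_3}$.

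The $\fr{m}_3$-side of the box should really be $\min\{r/\alpha_3,\ r^2/(\alpha_1\alpha_2)\}$, but since $r\ge \alpha_1\alpha_2/\alpha_3$ the second quantity is no smaller than $r/\alpha_3$. Hence the bound $r^2/(\alpha_1\alpha_2)$ is the relevant one, and it is where the lower threshold of the radius range enters.

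I will use the standard exponential coordinates $\Psi$ of Section \ref{sectioncoordinate}. Take $p\in B_{\alpha_I}(o,r)$ joined to $o$ by a minimizing curve $c:[0,1]\to G/K$ parametrized proportionally to arc length, with length $\le r\le\eta\alpha_1$. Lift $c$ horizontally to $\gamma:[0,1]\to G$ with $\gamma(0)=e$ and $\gamma^{-1}\dot\gamma=u(t)=u_{\fr{k}}+u_1+u_2+u_3$. The $\fr{m}$-part is determined by $c$: $\alpha_i\|u_i\|\le r$, so
\[
\|u_1\|\le r/\alpha_1,\qquad \|u_2\|\le r/\alpha_2\le r/\alpha_1,\qquad \|u_3\|\le r/\alpha_3.
\]
Rather than the horizontal lift, I expect to choose a lift whose $\fr{k}$-component is controlled. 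This is the auxiliary estimate mentioned in the introduction (Lemma \ref{KcomponentBound}). I would prove it by working in a slice: write $\gamma(t)=\exp(X(t))k(t)$ with $X(t)\in\fr{m}$ small, so that $\pi\gamma=\Psi(X)$. The $\fr{k}$-component $\op{Ad}$-rotated back then satisfies $\|u_{\fr{k}}\|\le C\|u_{\fr{m}}\|\,\|X\|$, obtained from the derivative formula of $\exp$ and an ODE estimate, as long as $\|X\|$ stays small. Smallness holds because $\|X\|\lesssim r/\alpha_1\le\eta$; this is what determines $\eta$.

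Next I apply the Chen--Strichartz expansion to $\gamma(1)=\exp(\Omega)$, where $\Omega=\int u+\tfrac12\iint[u(s),u(t)]+\cdots$, with the series convergent because $\int\|u\|\lesssim\eta$ is small. I project $\Omega$ onto $\fr{m}_1,\fr{m}_2,\fr{m}_3$ and bound each piece:
\begin{itemize}
\item The first-order terms contribute $\int u_i$, of size $r/\alpha_i$ in $\fr{m}_i$, plus $\int u_{\fr{k}}\in\fr{k}$.
\item For the $\fr{m}_3$-component, the linear term is $\le r/\alpha_3\le r^2/(\alpha_1\alpha_2)$.
\item The brackets $[u_1,u_2]$ land in $\fr{m}_3$ with size $\lesssim r^2/(\alpha_1\alpha_2)$.
\item Brackets involving $u_{\fr{k}}$ stay in the same $\fr{m}_i$ and are of size $(r/\alpha_1)\|u_i\|$, which is harmless.
\item Brackets $[u_1,u_1]$ and $[u_2,u_2]$ land in $\fr{k}$ by Lemma \ref{LemmaRelaxed}.
\item Brackets such as $[u_1,u_3]$ land in $\fr{m}_2$ with size $\le (r/\alpha_1)(r/\alpha_3)\le r/\alpha_2$, using $r\le\alpha_1$.
\item Higher-order terms are geometric in $r/\alpha_1\le\eta$, each with at least two factors whose sizes multiply appropriately.
\end{itemize}
The delicate point is a careful grading: every term must be checked to land in $\fr{m}_3$ with size at most $C r^2/(\alpha_1\alpha_2)$ and in $\fr{m}_2$ with size $Cr/\alpha_2$. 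I would assign weights $\alpha_1,\alpha_2,\alpha_1\alpha_2/r$ and argue inductively on iterated brackets using the cyclic relations.

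Finally, $\pi(\exp\Omega)=\Psi(\Omega_{\fr{m}}')$ after absorbing the $\fr{k}$-part. Here I write $\exp(\Omega)=\exp(\Omega')k$ via a local diffeomorphism $\fr{m}\times\fr{k}\to G$, and must check that this correction preserves the graded bounds up to constants; that step is another use of BCH. Lemma \ref{boundedcoordinates}, taken with an upper Jacobian bound (by continuity on a compact neighbourhood), then finishes the argument. I expect the main obstacle to be controlling the $\fr{k}$-component, in the lift and again in the final factorization, so that it never pushes large $\fr{m}_1$-sized quantities into $\fr{m}_3$ at the wrong scale.
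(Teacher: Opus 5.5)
Your proposal is correct and follows essentially the same route as the paper: lift via $\gamma(t)=\exp X(t)$ with $X(t)\in\fr{m}$, bound the $\fr{k}$-part of the Darboux derivative by its $\fr{m}$-part, expand $\log\gamma(1)$ by Chen--Strichartz, and use the fact that an iterated bracket with nonzero $\fr{m}_3$-projection must contain an $\fr{m}_3$-entry or both an $\fr{m}_1$- and an $\fr{m}_2$-entry (Lemma \ref{iterated brackets}, which replaces your weighted induction by a plain product-of-norms bound) to get a box in $\Psi$-coordinates. Two small remarks. With this slice lift, $\gamma(1)=\exp(X(1))$ already has logarithm $X(1)\in\fr{m}$, so your final $\fr{m}\times\fr{k}$ refactorization is unnecessary. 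Also, the $\fr{m}_3$-side of the box is the sum (not the minimum) of $r/\alpha_3$ and $r^2/(\alpha_1\alpha_2)$, which is what your term-by-term estimates actually give.
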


\begin{remark} The interval $\frac{\alpha_1\alpha_2}{\alpha_3}\leq r \leq \eta \alpha_1$ may be empty for some choices of $\alpha_1\leq\alpha_2\leq\alpha_3$, in which case Proposition \ref{UpperHeisenbergProp} holds trivially.
\end{remark}

  Before we prove Proposition \ref{UpperHeisenbergProp}, we need the following setup. Fix a neighborhood $V_0$ of $0$ in $\fr{g}$ such that $\exp:V_0\rightarrow \exp{V_0}\subseteq G$ is a diffeomorphism.  Let $\gamma:[0,1]\rightarrow \exp{V_0}$ be a smooth curve with $\gamma(0)=e$, and consider its left Darboux derivative (see e.g. \cite{Sharpe}, Chapter 3, Section 5)

\begin{equation}\label{MaurerCartanDerivative} X(t):=(dL_{\gamma(t)^{-1}})_{\gamma(t)}{\dot{\gamma}(t)}\in \fr{g}.  \end{equation}

\noindent Write

\begin{equation}\label{Darbo} X(t)=X_0(t)+X_1(t)+X_2(t)+X_3(t) \in \fr{g} \ \  \makebox{with} \ \ X_0(t)\in \fr{k} \ \  \makebox{and}  \ \ X_i(t)\in \fr{m}_i,\ \ i=1,2,3. \end{equation}

\noindent We extend $X(t)$ to the left-invariant, time-dependent vector field $X^L$ in $G$, given by 

\[ X^L(t,y)=(dL_y)_e(X(t)), \ \ y\in G,\ \ t\in [0,1],\]
 
\noindent with $X^L(t,e)=X(t)$. Observe that $\gamma$ satisfies the controlled differential equation 

\begin{equation}\label{CDE} \dot{\gamma}(t)=X^L(t,\gamma(t)),\ \  \gamma(0)=e.\end{equation}

  \noindent Since the metric induced by the inner product $Q$ on $G$ is left-invariant, the vector field $X^L$ has constant length equal to $\|X(t)\|$, $t\in [0,1]$, with respect to this metric. We identify $\fr{g}$ with the Lie algebra of left-invariant vector fields on $G$, and we apply the main theorem of Strichartz (\cite{Str87}) on Equation \eqref{CDE}. Since $\fr{g}$ is finite-dimensional, by the condition for convergence in \cite{Str87} (see p. 335), we obtain the following. 

\begin{lemma}\label{ChenStrichatzLemma}  There exists $\epsilon>0$ such that if $\sup_{t\in [0,1]}\|X(t)\|\leq \epsilon$ and $\gamma(1)=\exp{Z}$, with $Z\in V_0$, then 

\[ Z=\sum_{k=1}^{\infty}{Z^k},   \]

 \noindent with 

\[ Z^k=\sum_{\sigma \in S_k}{\bigg(\frac{(-1)^{e(\sigma)}}{k^2\binom{k-1}{e(\sigma)}}\bigg)\int_{\Delta_k}{\big[[\cdots [X(t_{\sigma(1)}),X(t_{\sigma(2)})], \cdots ], X(t_{\sigma(k)})]\big]dt_1dt_2\cdots dt_k}},    \]

\noindent where for $k=1$, the iterated bracket in the integral is simply $X(t_1)$. 

Here $\Delta_k$ denotes the $k$-simplex $\{(t_1,\dots,t_k)\in \mathbb R^k: 0\leq t_1\leq \cdots \leq t_k\leq 1\}$, $S_k$ denotes the permutation group of $\{1,\dots,k\}$ and $e(\sigma)$ denotes the number of errors in ordering consecutive terms in $\{\sigma(1),\dots,\sigma(k)\}$, that is $e(\sigma)=|\{i\in \{1,\dots,k-1\}: \sigma(i+1)<\sigma(i)\}|$. 

In particular, in view of relation \eqref{Darbo},

\begin{equation}\label{ChenExpression} Z^k=\sum_{\sigma \in S_k}{\bigg(\frac{(-1)^{e(\sigma)}}{k^2\binom{k-1}{e(\sigma)}}\bigg)\sum_{i_1,\dots, i_k\in \{0,1,2,3\}}{\int_{\Delta_k}{\big[[\cdots [X_{i_1}(t_{\sigma(1)}),X_{i_2}(t_{\sigma(2)})], \cdots ], X_{i_k}(t_{\sigma(k)})]\big]dt_1dt_2\cdots dt_k}}}.    \end{equation}

\end{lemma}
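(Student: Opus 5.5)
This lemma is a direct specialization of Strichartz's theorem (\cite{Str87}) on the logarithm of the solution of a controlled differential equation. I would prove it by checking his hypotheses in our finite-dimensional setting and then expanding the integrand by multilinearity.

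First, I would set up the correspondence. Identify $\fr{g}$ with the left-invariant vector fields on $G$. Equation \eqref{CDE} is then the time-dependent equation $\dot\gamma=\sum_a u_a(t)E_a^L(\gamma)$, where $\{E_a\}$ is a fixed $Q$-orthonormal basis of $\fr{g}$ and $u_a(t)=Q(X(t),E_a)$ are the coordinates of the Darboux derivative. Strichartz's main theorem expresses the solution, at least formally, as $\gamma(1)=\exp\big(\sum_k Z^k\big)$. The terms $Z^k$ are exactly the iterated-bracket integrals with the coefficients $(-1)^{e(\sigma)}/(k^2\binom{k-1}{e(\sigma)})$ over the simplex $\Delta_k$. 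He states this in terms of flows of vector fields, which for left-invariant fields is the group exponential.

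The main obstacle is convergence, and the point where it is used. I would bound the series termwise with \eqref{Cg}. An iterated bracket of length $k$ has norm at most $C_{\fr{g}}^{k-1}\prod_j\|X(t_{\sigma(j)})\|\le C_{\fr{g}}^{k-1}\epsilon^k$. The simplex has volume $1/k!$, there are $k!$ permutations, and each coefficient is at most $1/k^2$. Together these give $\|Z^k\|\le C_{\fr{g}}^{k-1}\epsilon^k/k^2$. This is summable for $\epsilon< C_{\fr{g}}^{-1}$, which is precisely Strichartz's convergence condition on p.~335 in the finite-dimensional case.

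Next, I would shrink $\epsilon$ so that the sum $\sum_k Z^k$ has norm at most $\sum_k C_{\fr g}^{k-1}\epsilon^k$. This keeps the sum inside $V_0$. Both sides of $\exp(\sum_k Z^k)=\gamma(1)$ then lie in $\exp V_0$, where $\exp$ is injective. If also $\gamma(1)=\exp Z$ with $Z\in V_0$, we conclude $Z=\sum_k Z^k$. To justify the identity itself, I would note that the truncated series solves the equation up to an error that tends to zero uniformly under this bound. Alternatively, both sides are real-analytic in a scaling parameter $\lambda X$, $\lambda\in[0,1]$, and agree as formal power series.

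Finally, \eqref{ChenExpression} follows by substituting the decomposition \eqref{Darbo}, $X=X_0+X_1+X_2+X_3$, into each slot of the bracket. Multilinearity of the bracket and linearity of the integral then give the sum over $i_1,\dots,i_k\in\{0,1,2,3\}$.
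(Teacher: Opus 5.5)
Your proposal is correct and follows the paper's route. The paper applies Strichartz's main theorem to the left-invariant system \eqref{CDE} and invokes his finite-dimensional convergence condition (p.~335), exactly as you do. Your explicit bound $\|Z^k\|\le C_{\fr{g}}^{k-1}\epsilon^k/k^2$, the injectivity of $\exp$ on $V_0$ to identify $Z$ with $\sum_k Z^k$, and the multilinear expansion giving \eqref{ChenExpression} spell out details the paper leaves implicit.
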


To use the above formula effectively, we need two auxiliary results.

\begin{lemma}\label{iterated brackets}Consider an iterated bracket of the form

\[ I:=\big[[\cdots [X_{i_1}(t_{\sigma(1)}),X_{i_2}(t_{\sigma(2)})], \cdots ], X_{i_k}(t_{\sigma(k)})]\big], \]

\noindent where $i_1,\dots,i_k\in \{0,1,2,3\}$. Then the following are true:\\

\noindent \emph{i.} If $I$ has non-zero $Q$-orthogonal projection on $\fr{m}_2$, then at least one of the indices $i_1,\dots,i_k$ is equal to 2 or 3, i.e., at least one of the vectors $X_{i_j}(t_{\sigma(j)})$, $j=1,\dots,k$, lies in $\fr{m}_2$ or $\fr{m}_3$.

\noindent \emph{ii.} If $I$ has non-zero $Q$-orthogonal projection on $\fr{m}_3$, then either there exists at least one index among $i_1,\dots,i_k$ that is equal to 3, or there exists at least one index equal to 1 and at least one index equal to 2.
\end{lemma}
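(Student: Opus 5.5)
The plan is to track, by induction on the length $k$ of the iterated bracket, which of the four subspaces $\fr{k},\fr{m}_1,\fr{m}_2,\fr{m}_3$ can contain it. The natural bookkeeping is a $\mathbb Z_2\times\mathbb Z_2$-grading. Set $\deg \fr{k}=(0,0)$, $\deg\fr{m}_1=(1,0)$, $\deg\fr{m}_2=(0,1)$ and $\deg\fr{m}_3=(1,1)$. We first check that $[V_a,V_b]\subseteq V_{a+b}$ for all pairs of these subspaces, where $V_{(0,0)}=\fr{k}$, $V_{(1,0)}=\fr{m}_1$, $V_{(0,1)}=\fr{m}_2$ and $V_{(1,1)}=\fr{m}_3$. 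The required facts are already available. First, $[\fr{k},\fr{k}]\subseteq\fr{k}$ because $\fr{k}$ is a subalgebra. Second, $[\fr{k},\fr{m}_i]\subseteq\fr{m}_i$ by the $\op{Ad}_K$-invariance of $\fr{m}_i$. Third, $[\fr{m}_i,\fr{m}_i]\subseteq\fr{k}$ by part 1 of Lemma \ref{LemmaRelaxed}. Fourth, $[\fr{m}_i,\fr{m}_j]=\fr{m}_k$ for $i,j,k$ pairwise distinct by Assumption \ref{Assumption}. In each case the degrees add correctly in $\mathbb Z_2\times\mathbb Z_2$; for instance $(1,0)+(0,1)=(1,1)$ and $(1,0)+(1,1)=(0,1)$.

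Next we argue by induction on $k$ that the iterated bracket $I$ lies entirely in the single homogeneous component $V_{\deg}$, where $\deg=\deg X_{i_1}+\cdots+\deg X_{i_k}$. The case $k=1$ is trivial. For the inductive step, $I=[I',X_{i_k}(t_{\sigma(k)})]$ with $I'\in V_{\deg'}$, so the grading rule puts $I$ in $V_{\deg'+\deg X_{i_k}}$. Since the components are pairwise $Q$-orthogonal, a nonzero projection of $I$ on $\fr{m}_2$ forces $I\neq 0$ and total degree $(0,1)$. Likewise, a nonzero projection on $\fr{m}_3$ forces total degree $(1,1)$.

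It remains to read off the index conditions.

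For part i, a total degree of $(0,1)$ requires the second coordinate of the sum to be $1$. So at least one summand must have second coordinate $1$, and the only such subspaces are $\fr{m}_2$ and $\fr{m}_3$.

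For part ii, a total degree of $(1,1)$ needs a summand with first coordinate $1$, which means index 1 or 3. It also needs a summand with second coordinate $1$, which means index 2 or 3. If no index equals $3$, both requirements must be met by indices 1 and 2, so both appear.

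The only real point is the grading compatibility, and in particular the inclusion $[\fr{m}_i,\fr{m}_i]\subseteq\fr{k}$. That inclusion is exactly part 1 of Lemma \ref{LemmaRelaxed}, so we expect no genuine obstacle here.
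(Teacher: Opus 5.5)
Your proof is correct, but it is organized differently from the paper's. The paper argues by contraposition using only subalgebras. If no index equals $2$ or $3$, every entry of $I$ lies in $\fr{h}_1=\fr{k}\oplus\fr{m}_1$, which is closed under brackets by part 1 of Lemma \ref{LemmaRelaxed}, so $I\in\fr{h}_1$ and hence $I$ is $Q$-orthogonal to $\fr{m}_2$. Part ii is handled the same way: once index $3$ is excluded, the paper applies the same argument to $\fr{h}_1$ and $\fr{h}_2$.

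You instead observe that Assumption \ref{Assumption}, part 1 of Lemma \ref{LemmaRelaxed} and the $\op{Ad}_K$-invariance of the $\fr{m}_i$ together make $\fr{g}=\fr{k}\oplus\fr{m}_1\oplus\fr{m}_2\oplus\fr{m}_3$ a $\mathbb Z_2\times\mathbb Z_2$-graded Lie algebra. You then prove by induction that an iterated bracket of homogeneous entries is homogeneous of the summed degree.

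Your route invokes slightly more structure: the mixed inclusions $[\fr{m}_i,\fr{m}_j]\subseteq\fr{m}_k$, and not just the closure of the $\fr{h}_i$. In return it gives a sharper conclusion. $I$ lies in a single summand, determined by the parities of the numbers of entries from $\fr{m}_1$, $\fr{m}_2$ and $\fr{m}_3$. For instance, if no index equals $3$ and $I$ projects nontrivially onto $\fr{m}_3$, then indices $1$ and $2$ each occur an odd number of times.

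Your argument also makes explicit that, under Assumption \ref{Assumption}, the decomposition is exactly the kind of $\mathbb Z_2\times\mathbb Z_2$-grading that appears for $\mathbb Z_2\times\mathbb Z_2$-symmetric spaces in Section \ref{Z2WallachApplications}. The paper's version is more economical, and its weaker conclusion is all that the estimates in Proposition \ref{UpperHeisenbergProp} require.
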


\begin{proof}  For part i., assume on the contrary that none of the indices $i_1,\dots,i_k$ is equal to 2 or 3. Then all vectors $X_{i_j}(t_{\sigma(j)})$, $j=1,\dots,k$, lie in $\fr{k}\oplus \fr{m}_1$. Under Assumption \ref{Assumption}, by part 1. of Lemma \ref{LemmaRelaxed}, the space $\fr{k}\oplus \fr{m}_1$ is a subalgebra of $\fr{g}$, and thus $I\in \fr{k}\oplus \fr{m}_1$, a contradiction.  For part ii., if none of the indices $i_1,\dots,i_k$ is equal to 3, then all vectors $X_{i_j}(t_{\sigma(j)})$, $j=1,\dots,k$, lie in $\fr{k}\oplus \fr{m}_1\oplus \fr{m}_2$. In that case, at least one index is equal to 1, for otherwise all vectors $X_{i_j}(t_{\sigma(j)})$ would lie in the subalgebra $\fr{k}\oplus \fr{m}_2$ and we would have $I\in \fr{k}\oplus \fr{m}_2$, a contradiction. Similarly, at least one index is equal to 2.   \end{proof}

\noindent We state and prove our second auxiliary result, that will allow us to bound the $\fr{k}$-component $X_0(t)$ of the Darboux derivative $X(t)$.

\begin{lemma}\label{KcomponentBound} Let $G/K$ be a homogeneous space with $G$ compact, having the reductive decomposition $\fr{g}=\fr{k}\oplus\fr{m}$ with respect to an $\op{Ad}$-invariant inner product $Q$ on $\fr{g}$, and let $\| \cdot \|$ denote the norm induced by $Q$. There exists a neighborhood $V_{\fr{m}}$ of $0$ in $\fr{m}$ and a positive constant $C_{\fr{m}}$, depending only on the space $G/K$ and the reductive decomposition, such that the following hold:\\
 \noindent 1. The map $\Psi:V_{\fr{m}}\rightarrow \pi(\exp{V_{\fr{m}}})$, $v\mapsto \pi(\exp{v})$ is a diffeomorphism. \\
 
 \noindent 2. Let $v:[0,1]\rightarrow V_{\fr{m}}$ be a smooth curve with $v(0)=0$, and set $\gamma(t):=\exp{v(t)}$. Let
 
 \[ X(t)=(dL_{\gamma(t)^{-1}})_{\gamma(t)}{\dot{\gamma}(t)}, \]
 
 \noindent be the Darboux derivative of $\gamma$, and decompose 
 
 \[  X(t)=X_{0}(t)+X_{\fr{m}}(t),\] 
 
 \noindent where $X_{0}(t)\in \fr{k}$ and $X_{\fr{m}}(t)\in \fr{m}$.  Then 
  
  \[ \|\dot{v}(t)\|\leq C_{\fr{m}}\|X_{\fr{m}}(t)\| \ \ \makebox{and} \ \ \|X_0(t)\|\leq C_{\fr{m}}\|X_{\fr{m}}(t)\|, \ \ \makebox{for all} \ \ t\in [0,1].  \]
  \end{lemma}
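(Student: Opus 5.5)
The plan is to compute the Darboux derivative of $\gamma(t)=\exp v(t)$ with the standard formula for the differential of the exponential map. This gives
\[
X(t)=(dL_{\exp(-v)})\,d\exp_{v}(\dot v)=\Big(\frac{1-e^{-\op{ad}_{v}}}{\op{ad}_{v}}\Big)\dot v=\dot v-\tfrac12[v,\dot v]+\cdots=:T(v)\dot v ,
\]
where $T(w)=\sum_{j\ge0}\frac{(-1)^j}{(j+1)!}\op{ad}_w^j$ is an analytic family of linear maps $\fr{g}\to\fr{g}$ with $T(0)=\op{Id}$. Since $v(t)\in\fr{m}$, also $\dot v(t)\in\fr{m}$.

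Composing with the $Q$-orthogonal projections gives
\[
X_{\fr{m}}(t)=P_{\fr{m}}T(v)\dot v, \qquad X_0(t)=P_{\fr{k}}T(v)\dot v .
\]
Both depend linearly on $\dot v$, through the operators
\[
A(w):=P_{\fr{m}}T(w)|_{\fr{m}}:\fr{m}\to\fr{m}, \qquad B(w):=P_{\fr{k}}T(w)|_{\fr{m}}:\fr{m}\to\fr{k},
\]
which depend continuously (indeed analytically) on $w\in\fr{m}$.

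The key point is that $A(0)=\op{Id}_{\fr{m}}$ and $B(0)=0$. By continuity, there is a closed ball $\bar V$ around $0$ in $\fr{m}$ on which $\|A(w)-\op{Id}\|_{op}\le\frac12$. We choose $V_{\fr{m}}$ to be an open ball contained in $\bar V$, also small enough that $\Psi$ restricted to $V_{\fr{m}}$ is a diffeomorphism onto its image. This is possible by the inverse function theorem, since $d\Psi_0=\op{Id}_{\fr{m}}$ as noted in Section \ref{sectioncoordinate}, and it gives part 1.

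On $\bar V$, each $A(w)$ is invertible with $\|A(w)^{-1}\|_{op}\le 2$, by the Neumann series. Therefore
\[
\|\dot v(t)\|=\|A(v(t))^{-1}X_{\fr{m}}(t)\|\le 2\|X_{\fr{m}}(t)\|.
\]
Next we set $M:=\sup_{w\in\bar V}\|B(w)\|_{op}$, which is finite by compactness of $\bar V$ and continuity of $B$. Then
\[
\|X_0(t)\|\le M\|\dot v(t)\|\le 2M\|X_{\fr{m}}(t)\|.
\]
Taking $C_{\fr{m}}:=\max\{2,2M\}$ gives part 2. This constant depends only on $G/K$, $Q$ and the decomposition, since $\bar V$ was chosen using only these data.

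The only delicate step is justifying the formula for $d\exp$ and the claim that the correction operator is continuous with value $\op{Id}$ at $0$. This is standard: either cite the classical formula, or argue abstractly that
\[
(w,\xi)\mapsto (dL_{\exp(-w)})\,d\exp_w(\xi)
\]
is smooth and linear in $\xi$, and equals $\xi$ at $w=0$. The abstract argument is all that is needed. No obstacle is expected beyond the compactness and continuity used to get uniform constants, which is why $V_{\fr{m}}$ is taken inside a closed ball where the operator bounds hold.
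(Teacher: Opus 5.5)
Your proposal is correct and follows the paper's proof essentially step for step: your operator $A(w)$ is exactly the paper's $P_v=\pi_{\fr{m}}\circ\left.\frac{1-e^{-\op{ad}_v}}{\op{ad}_v}\right|_{\fr{m}}$, which is inverted uniformly on a compact ball around $0$. The differences are cosmetic. You get the explicit bound $\|A(w)^{-1}\|_{op}\le 2$ from a Neumann series, where the paper uses compactness. You bound $X_0$ via $\sup_{\bar V}\|B(w)\|_{op}$, where the paper bounds $\|X_0\|\le\|X\|$ by the explicit series estimate $\frac{e^{C'C_{\fr{g}}}-1}{C'C_{\fr{g}}}\|\dot v\|$.
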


\begin{proof} There exists a neighborhood $U$ of $0$ in $\fr{m}$ such that the map $\left.\Psi\right|_{U}$ is a diffeomorphism onto its image (c.f. Section \ref{sectioncoordinate}).  The derivative of the exponential map (see e.g. \cite{Hel}, p. 105, 106), yields
   
\begin{eqnarray*} X(t)&=&(dL_{\exp{(-v(t))}})_{\exp{v(t)}}\circ (d\exp_{v(t)})(\dot{v}(t))\\ \\ 
  &=&\frac{1-e^{-\op{ad}_{v(t)}}}{\op{ad}_{v(t)}}(\dot{v}(t))=\bigg(\sum_{k=0}^{\infty}{\frac{(-1)^k}{(k+1)!}\op{ad}_{v(t)}^k}\bigg)(\dot{v}(t)).\end{eqnarray*}

\noindent Now let $\pi_{\fr{k}}:\fr{g}\rightarrow \fr{k}$ and $\pi_{\fr{m}}:\fr{g}\rightarrow \fr{m}$ be the corresponding $Q$-orthogonal projections. Moreover, for $v\in \fr{m}$, define the linear map $P_v:\fr{m}\rightarrow \fr{m}$ by 

\begin{equation}\label{analyticmap}
P_v:=\pi_{\fr{m}}\circ
\left.\bigg(
\frac{1-e^{-\op{ad}_v}}{\op{ad}_v}\bigg)\right|_{\fr{m}}=\sum_{k=0}^{\infty}{\frac{(-1)^k}{(k+1)!}
\pi_{\fr{m}}\circ
\left.\op{ad}_v^k\right|_{\fr{m}}}.
\end{equation}

\noindent Then  

\begin{equation}\label{Xinvert}X_{\fr{m}}(t)=P_{v(t)}(\dot{v}(t)), \ \ t\in [0,1].\end{equation}
   
By relation \eqref{analyticmap}, the map $\mathcal{P}:\fr{m}\rightarrow \fr{gl}(\fr{m})$,  $v\mapsto P_v$, is analytic. Since $\mathcal{P}(0)=P_0=\op{Id}_{\fr{m}}\in \op{GL}(\fr{m})$, and $\op{GL}(\fr{m})$ is open in $\fr{gl}(\fr{m})$, there exists a neighborhood $W\subseteq U$ of $0$ in $\fr{m}$ such that $P_v\in \op{GL}(\fr{m})$ for all $v\in W$. Choose an open ball $V_{\fr{m}}$ with respect to $\left.Q\right|_{\fr{m}\times \fr{m}}$, centered at $0\in \fr{m}$, such that $V_{\fr{m}}\subset \overline{V_{\fr{m}}}\subset W$.  Observe that the ball $V_{\fr{m}}$ does not depend on the choice of the curve $v(t)$. Since $v\mapsto P_v^{-1}$ is continuous and $\overline{V_{\fr{m}}}$ is compact, there exists a positive constant $C$ such that $\|P_v^{-1}\|_{op}\leq C$ for all $v\in \overline{V_{\fr{m}}}$, where $\| \cdot \|_{op}$ denotes the operator norm in $\fr{gl}(\fr{m})$. Equation \eqref{Xinvert} then yields

 \begin{equation}\label{LemmaBFirstBound} \|\dot{v}(t)\|\leq \|P_{v(t)}^{-1}\|_{op}\|X_{\fr{m}}(t)\|\leq C \|X_{\fr{m}}(t)\|, \ \ \makebox{for all} \ \ t\in [0,1]. \end{equation}

To estimate $X_0(t)$, write

\begin{equation*}X_0(t)=\pi_{\fr{k}}(X(t))=\pi_{\fr{k}}\bigg(\sum_{k=0}^{\infty}{\frac{(-1)^k}{(k+1)!} \op{ad}_{v(t)}^k(\dot{v}(t)})\bigg).    \end{equation*}

\noindent  Let $C_{\fr{g}}$ be the constant defined by relation \eqref{Cg}, so that the above equation yields

\[  \|X_0(t)\|\leq \|X(t)\|\leq \bigg(\sum_{k=0}^{\infty}{\frac{\big(C_{\fr{g}}\|v(t)\|\big)^k}{(k+1)!}}\bigg)\|\dot{v}(t)\|, \ \ t\in [0,1].\]

 \noindent If $C_{\fr{g}}=0$ (i.e., $\fr{g}$ is abelian), then $X_0(t)=0$ and the result follows immediately. Otherwise, since $\overline{V_{\fr{m}}}$ is compact, set $C^{\prime}:=\max_{v\in \overline{V_{\fr{m}}}}{\|v\|}<\infty$. Then the above inequality yields

\[  \|X_0(t)\|\leq \frac{e^{C^{\prime}C_{\fr{g}}}-1}{C^{\prime}C_{\fr{g}}}\|\dot{v}(t)\|,   \]
 
\noindent which, along with relation \eqref{LemmaBFirstBound}, yields 

\[ \|X_0(t)\|\leq C \frac{e^{C^{\prime}C_{\fr{g}}}-1}{C^{\prime}C_{\fr{g}}}\|X_{\fr{m}}(t)\|.\]

\noindent Setting $C_{\fr{m}}:=C\max\{1,\frac{e^{C^{\prime}C_{\fr{g}}}-1}{C^{\prime}C_{\fr{g}}}\}$ yields the desired bounds.     
   \end{proof}

We are ready to prove Proposition \ref{UpperHeisenbergProp}.\\

\noindent \emph{Proof of Proposition \ref{UpperHeisenbergProp}.} Let $V_{\fr{m}}$ be the neighborhood of $0\in \fr{m}$ given by Lemma \ref{KcomponentBound}. By shrinking $V_{\fr{m}}$ if necessary, we may assume that $V_{\fr{m}}$ is contained in an open neighborhood $V_0$ of $0\in \fr{g}$ such that $\exp:V_0\rightarrow \exp{V_0}$ is a diffeomorphism. Let $B_{Q_{\fr{m}}}(o,\eta)$ denote the open ball in $G/K$, centered at $o$ with radius $\eta$, with respect to the distance induced from the metric $\left.Q\right|_{\fr{m}\times \fr{m}}$. We may choose $\eta$ sufficiently small so that $B_{Q_{\fr{m}}}(o,\eta)\subseteq \Psi(V_{\fr{m}})$, and since $\alpha_I\geq \left.\alpha_1^2Q\right|_{\fr{m}\times \fr{m}}$, we have

\[ B_{\alpha_I}(o,\eta\alpha_1)\subseteq B_{Q_{\fr{m}}}(o,\eta)\subseteq \Psi(V_{\fr{m}}),\]
  
\noindent uniformly in $\alpha_1\leq \alpha_2\leq \alpha_3$.   

Let $0<r\leq \eta\alpha_1$, and choose $p\in B_{\alpha_I}(o,r)\subseteq \Psi(V_{\fr{m}})$. Since the connected homogeneous space $(G/K,\alpha_I)$ is complete, we may choose a minimizing (constant speed) geodesic $f:[0,1]\rightarrow B_{\alpha_I}(o,r)$, with $f(0)=o$, $f(1)=p$ and $\op{Length}(f) <r$. Since $f([0,1])\subseteq\Psi(V_{\fr{m}})$, there exists a smooth curve $v:[0,1]\rightarrow V_{\fr{m}}$ such that $f(t)=\pi(\exp{v(t)})$, $t\in [0,1]$. We set $\gamma(t):=\exp{v(t)}$. By the definitions of $V_{\fr{m}}$ and $V_0$, we have $\gamma([0,1])\subseteq \exp(V_0)$. We consider the left Darboux derivative of $\gamma$,

\[ X(t)=X_0(t)+X_1(t)+X_2(t)+X_3(t), \]
 
 \noindent as defined by relation \eqref{MaurerCartanDerivative}.  Taking into account the fact that $\op{Length}(f) <r$ and $f$ has constant speed, the definition of the left Darboux derivative, the identity $\pi\circ L_x=\tau_x\circ \pi$, $x\in G$, and the $G$-invariance of the metric $g$, we obtain  

    \begin{eqnarray*} r^2 &>& g(\dot{f}(t),\dot{f}(t))=g \bigg((d\tau_{\gamma(t)})_{o}\circ (d\pi)_eX(t), (d\tau_{\gamma(t)})_{o}\circ (d\pi)_eX(t)\bigg)  \\  \\ 
    &=&g \big((d\pi)_eX(t), (d\pi)_eX(t)\big)=\alpha_1^2\|X_1(t)\|^2+\alpha_2^2\|X_2(t)\|^2+\alpha_3^2\|X_3(t)\|^2, \end{eqnarray*}
 
 \noindent from which we get 

\begin{equation}\label{MaurerBoundsm} \|X_i(t)\|\leq \frac{r}{\alpha_i}, \ \ i=1,2,3.\end{equation}
 
Therefore, $\|X_{\fr{m}}(t)\|\leq \frac{\sqrt{3}r}{\alpha_1}$.  Applying Lemma \ref{KcomponentBound}, we also obtain

\begin{equation}\label{MaurerBoundsk} \|X_0(t)\|\leq \sqrt{3}C_{\fr{m}}\frac{r}{\alpha_1}.\end{equation}

\noindent Since $r\leq \eta\alpha_1$, we have $\sup_{t\in [0,1]}\|X(t)\|\leq \sqrt{3}(1+C_{\fr{m}})\eta$.  Thus we can shrink $\eta$ if necessary, so that $\sup_{t\in [0,1]}\|X(t)\|\leq \epsilon$, where $\epsilon$ is given by Lemma \ref{ChenStrichatzLemma}.  Since $\gamma(1)=\exp{v(1)}$, where $v(1)\in V_{\fr{m}}\subset V_0$, we may apply Lemma \ref{ChenStrichatzLemma} with $Z=v(1)$. Since $Z\in \fr{m}$, we can write 

\[ Z=Z_1+Z_2+Z_3, \ \ Z_i\in \fr{m}_i.\]

\noindent We will use Lemma \ref{ChenStrichatzLemma} to obtain bounds for $\|Z_i\|$, $i=1,2,3$.  Let $\pi_{\fr{m}_i}:\fr{g}\rightarrow \fr{m}_i$ denote the corresponding $Q$-orthogonal projections, $i=1,2,3$.  Consider an iterated bracket of the form

\begin{equation}\label{TempBracket} I:=\big[[\cdots [X_{i_1}(t_{\sigma(1)}),X_{i_2}(t_{\sigma(2)})], \cdots ], X_{i_k}(t_{\sigma(k)})]\big], \ \ i_1,\dots,i_k\in \{0,1,2,3\}, \end{equation}

\noindent and the constant $C_{\fr{g}}$ defined by relation \eqref{Cg}. Setting $C:=\max\{1,\sqrt{3}C_{\fr{m}}\}$, relations \eqref{MaurerBoundsm} and \eqref{MaurerBoundsk} yield

\[ \|\pi_{\fr{m}_1}(I)\|\leq C_{\fr{g}}^{k-1}(C\frac{r}{\alpha_1})^k. \]

\noindent Along with the facts that there are at most $4^k$ such iterated brackets in the expression \eqref{ChenExpression}, $\bigg|\bigg(\frac{(-1)^{e(\sigma)}}{k^2\binom{k-1}{e(\sigma)}}\bigg)\bigg|\leq 1$, $|S_k|=k!$ and the volume of the simplex $\Delta_k$ is $\frac{1}{k!}$, relation \eqref{ChenExpression} yields

\[ \|\pi_{\fr{m}_1}(Z^k)\|\leq k!\int_{\Delta_k}{C_{\fr{g}}^{k-1}(4C\frac{r}{\alpha_1})^k dt_1\cdots dt_k}\leq C_{\fr{g}}^{k-1}(4C\frac{r}{\alpha_1})^k=4C\frac{r}{\alpha_1}(4C_{\fr{g}}C\frac{r}{\alpha_1})^{k-1}. \]

\noindent  We further shrink $\eta$ if necessary, so that $4C_{\fr{g}}C\eta\leq \frac{1}{2}$, and thus 

\[ 4C_{\fr{g}}C\frac{r}{\alpha_1}\leq \frac{1}{2}.\] 

\noindent Therefore, we obtain  

\begin{equation}\label{Z1Bound}\|Z_1\|\leq \sum_{k=1}^{\infty}{\|\pi_{\fr{m}_1}(Z^k)\|}\leq 4C\frac{r}{\alpha_1}\sum_{k=1}^{\infty}{(\frac{1}{2})^{k-1}}\leq 8C\frac{r}{\alpha_1}.  \end{equation}

To obtain the bound for $Z_2$, in view of Lemma \ref{iterated brackets}, if an iterated bracket of the form \eqref{TempBracket} has non-zero projection on $\fr{m}_2$, then at least one of the indices $i_1,\dots, i_k$ is equal to $2$ or $3$.  By virtue of relation \eqref{MaurerBoundsm}, and given that $\alpha_2\leq \alpha_3$, at least one of the indices $i_j$ satisfies $\|X_{i_j}(t_{\sigma(j)})\|\leq \frac{r}{\alpha_2}$.  Therefore, the same reasoning as for $Z_1$ yields $\|\pi_{\fr{m}_2}(Z^k)\|\leq 4\frac{r}{\alpha_2}(4C_{\fr{g}}C\frac{r}{\alpha_1})^{k-1}\leq 4\frac{r}{\alpha_2}(\frac{1}{2})^{k-1}$, and in turn

\begin{equation}\label{Z2Bound}\|Z_2\|\leq 8\frac{r}{\alpha_2}. \end{equation}
 
For the bound of $Z_3$, in view of Lemma \ref{iterated brackets}, if an iterated bracket of the form \eqref{TempBracket} has non-zero projection on $\fr{m}_3$, then either at least one of the indices $i_1,\dots, i_k$ is equal to $3$ or there exists at least one index equal to 1 and at least one index equal to 2. Thus for each $k\geq 1$, we can split the relevant set of contributing iterated brackets into two corresponding disjoint sets $S^k_3$ and $S^k_{12}$, where $S^k_3$ contains the $k$-fold brackets with at least one index equal to 3 and $S^k_{12}$ contains the $k$-fold brackets with no index equal to $3$, and at least one index equal to 1 and at least one index equal to 2. Observe that $S^1_{12}$ is empty.  Using the crude bound $4^k$ for the cardinality of both sets, and working in a similar way to the other two cases, we obtain

 \begin{eqnarray*}\|\pi_{\fr{m}_3}(Z^1)\|&\leq & 4\frac{r}{\alpha_3}, \ \ \makebox{and}\\ \\ 
 \|\pi_{\fr{m}_3}(Z^k)\|&\leq& 4\frac{r}{\alpha_3}(4C_{\fr{g}}C\frac{r}{\alpha_1})^{k-1}+4^2C_{\fr{g}}\frac{r}{\alpha_1}\frac{r}{\alpha_2}(4CC_{\fr{g}}\frac{r}{\alpha_1})^{k-2}\\ \\ 
 &\leq& 4\frac{r}{\alpha_3}(\frac{1}{2})^{k-1}+4^2C_{\fr{g}}\frac{r}{\alpha_1}\frac{r}{\alpha_2}(\frac{1}{2})^{k-2}, \ \ k\geq 2.\end{eqnarray*}

\noindent By summing as in the cases for $Z_1,Z_2$, it follows that

\begin{equation}\label{Z3Bound}\|Z_3\|\leq 8\big(\frac{r}{\alpha_3}+4C_{\fr{g}}\frac{r}{\alpha_1}\frac{r}{\alpha_2}\big).  \end{equation}

Setting $C^{\prime}:=8\max\{1,C,4C_{\fr{g}}\}$, relations \eqref{Z1Bound} - \eqref{Z3Bound} yield

 \begin{equation*}\|Z_1\|\leq C^{\prime}\frac{r}{\alpha_1}, \ \ \|Z_2\|\leq C^{\prime}\frac{r}{\alpha_2} \ \ \makebox{and} \ \ \|Z_3\|\leq C^{\prime}\big(\frac{r}{\alpha_3}+\frac{r^2}{\alpha_1\alpha_2}\big). \end{equation*}
  
\noindent Therefore, letting $S_r\subset \fr{m}$ be the box

 \[ S_r:=\bigg[-C^{\prime}\frac{r}{\alpha_1},C^{\prime}\frac{r}{\alpha_1}\bigg]^{d_1}\times \bigg[-C^{\prime}\frac{r}{\alpha_2},C^{\prime}\frac{r}{\alpha_2}\bigg]^{d_2}\times \bigg[-C^{\prime}\big(\frac{r}{\alpha_3}+\frac{r^2}{\alpha_1\alpha_2}\big),C^{\prime}\big(\frac{r}{\alpha_3}+\frac{r^2}{\alpha_1\alpha_2}\big)\bigg]^{d_3}, \] 

\noindent we obtain $Z\in S_r$, and hence $p=\pi(\exp{Z})=\Psi(Z)\in \Psi(S_r)$. Since $p\in B_{\alpha_I}(o,r)$ is arbitrary, we conclude that 

\[ B_{\alpha_I}(o,r)\subseteq \Psi(S_r).\] 

Now choose a closed ball $B$ with respect to $\left.Q\right|_{\fr{m}\times \fr{m}}$, centered at $0\in \fr{m}$, such that $B\subset V_{\fr{m}}$. Let $C_{J}$ be the maximum of the Jacobian of the coordinate function $\Psi$ on $B$. Since $\frac{r}{\alpha_1}\leq \eta$ uniformly in $\alpha_1\leq \alpha_2\leq \alpha_3$, we have $\frac{r}{\alpha_i}\leq \eta$ and $\frac{r^2}{\alpha_1\alpha_2}\leq \eta^2$, uniformly in $\alpha_1\leq \alpha_2\leq \alpha_3$. Hence we may shrink $\eta$ again so that $S_r$ is contained in $B$, uniformly in $\alpha_1\leq \alpha_2\leq \alpha_3$. We obtain

\begin{eqnarray*} V_{\alpha_I}(r)&=&\mu_0(B_{\alpha_I}(o,r))\leq \mu_0(\Psi(S_r))\leq C_J m(S_r)\\ \\ 
&=&C_J(2C^{\prime})^n(\frac{r}{\alpha_1})^{d_1} (\frac{r}{\alpha_2})^{d_2}(\frac{r}{\alpha_3}+\frac{r^2}{\alpha_1\alpha_2})^{d_3}.\end{eqnarray*}

\noindent Now for $r\geq \frac{\alpha_1\alpha_2}{\alpha_3}$, we have $\frac{r}{\alpha_3}\leq \frac{r^2}{\alpha_1\alpha_2}$, or $\frac{r}{\alpha_3}+\frac{r^2}{\alpha_1\alpha_2}\leq 2\frac{r^2}{\alpha_1\alpha_2}$, in which case the previous inequality yields

\[ V_{\alpha_I}(r)\leq C_4(\alpha_1^{d_1+d_3}\alpha_2^{d_2+d_3})^{-1}r^{n+d_3}, \ \ \frac{\alpha_1\alpha_2}{\alpha_3}\leq r \leq \eta \alpha_1, \]

\noindent with $C_4:=2^{d_3}C_J(2C^{\prime})^n$. Since $Vol_{\alpha_I}(r)=\Lambda_{\alpha_I}V_{\alpha_I}(r)$, multiplying the above inequality by $\Lambda_{\alpha_I}$ yields the desired estimate.\qed

\section{Post sub-Riemannian interval of the radius}\label{PostSubRiemannian}

In this section, we consider the post sub-Riemannian interval for the radius $r$, where $r$ is larger than the smallest metric parameter $\alpha_1$. In that case, the directions in $\fr{m}_1$ no longer determine the volume growth, and the latter is controlled by the directions in $\fr{m}_2\oplus \fr{m}_3$. In turn, the growth is of order $r^{n-d_1}$. For the lower bound, we use the function $F^{\delta}$ from Section \ref{HeisenbergLowerSection}.  For the upper bound, we consider a suitable homogeneous fibration whose fiber has tangent space $\fr{m}_1$.

\subsection{Post sub-Riemannian lower bound}\label{SectionPostSubriemannianLower}
The main result of the subsection is the following.

\begin{prop}\label{PostSubRiemannianLowerProp} Let $(G/K,\alpha_I)$ be a homogeneous space satisfying Assumption \ref{Assumption}.  Then there exists a positive constant $C_5$, depending only on the space $G/K$, such that
   
   \[ Vol_{\alpha_I}(r)\geq C_5\alpha_1^{d_1}\big(\frac{\alpha_3}{\alpha_2}\big)^{d_3}r^{n-d_1}, \ \ \alpha_1\leq r\leq \alpha_2, \] 

\noindent uniformly in $\alpha_1\leq \alpha_2\leq \alpha_3$.   
   \end{prop}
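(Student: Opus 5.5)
Since $Vol_{\alpha_I}(r)=\Lambda_{\alpha_I}V_{\alpha_I}(r)$, the claim is equivalent to
\[ V_{\alpha_I}(r)\geq C_5\,\alpha_2^{-(d_2+d_3)}r^{n-d_1},\qquad \alpha_1\leq r\leq\alpha_2 .\]
I will prove this by the same box construction as in the proof of Proposition \ref{HeisenbergLower}, again using the maps $F^{\delta}$ with $\delta=\frac{\alpha_1}{\alpha_2}\in(0,1]$. What changes is the side lengths of the box. Once $r\geq\alpha_1$, the $\fr{m}_1$-coordinates can no longer be taken of size $r/\alpha_1$, since that exceeds the radius $\epsilon$ of the coordinate neighbourhood. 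So I cap them at a constant size. The commutator parameter $s$ stays capped, while $t$ grows with $r/\alpha_2$.

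Concretely, let $\epsilon\in(0,1)$ be such that $[-\epsilon,\epsilon]^n\subset V$, with $V$ as in Proposition \ref{Vprime}, and let $C$ be the constant of Lemma \ref{distancelemma}. Take first $\alpha_1\leq r\leq \epsilon\alpha_2$. For a large constant $C'$ depending only on $G/K$, consider
\[ S_r:=\Big[-\tfrac{\epsilon}{C'},\tfrac{\epsilon}{C'}\Big]^{d_1}\times\Big[-\tfrac{r}{C'\alpha_2},\tfrac{r}{C'\alpha_2}\Big]^{d_2}\times\Big[-\tfrac{\epsilon r}{C'^2\alpha_2}\cdot\tfrac{\alpha_2}{\alpha_1},\ \tfrac{\epsilon r}{C'^2\alpha_2}\cdot\tfrac{\alpha_2}{\alpha_1}\Big]^{d_3}. \]
The $z$-side length here is $\frac{\epsilon r}{C'^2\alpha_1}$. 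To check it is admissible, use $r\le\epsilon\alpha_2$, which gives $\frac{\epsilon r}{C'^2\alpha_1}\le\frac{\epsilon^2\alpha_2}{C'^2\alpha_1}$. This is \emph{not} bounded when $\alpha_2\gg\alpha_1$, so I must instead keep $|z_k|\le\epsilon$ directly. I therefore take the $z$-side equal to $\min\{\epsilon,\frac{r}{C'^2\alpha_1}\cdot\frac{r}{\alpha_2}\}$ and verify the sizes separately.

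First, the distance estimate. By Lemma \ref{distancelemma}, the $\fr{m}_1$ term costs $\alpha_1\sqrt{d_1}\,\epsilon/C'\le r\sqrt{d_1}/C'$, using $\alpha_1\le r$. The $\fr{m}_2$ term costs $\sqrt{d_2}\,r/C'$. The commutator terms cost
\[ C(\alpha_1+\delta\alpha_2)\sqrt{|z_k|}=2C\alpha_1\sqrt{|z_k|}\le 2C\alpha_1\sqrt{\tfrac{r^2}{C'^2\alpha_1\alpha_2}}\le \tfrac{2Cr}{C'}, \]
because $\alpha_1\le\alpha_2$. Hence $F^{\delta}(S_r)\subseteq B_{\alpha_I}(o,r)$ once $C'$ is large.

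Second, the measure, via Proposition \ref{Vprime}:
\[ V_{\alpha_I}(r)\ge\tfrac12\Big(\tfrac{\alpha_1}{\alpha_2}\Big)^{d_3}m(S_r). \]
When the $z$-side equals $\frac{r^2}{C'^2\alpha_1\alpha_2}$, this gives
\[ V_{\alpha_I}(r)\ge c\,\Big(\tfrac{\alpha_1}{\alpha_2}\Big)^{d_3}\Big(\tfrac{r}{\alpha_2}\Big)^{d_2}\Big(\tfrac{r^2}{\alpha_1\alpha_2}\Big)^{d_3}=c\,\frac{r^{d_2+2d_3}}{\alpha_2^{d_2+2d_3}} .\]
This is at least $c\,\alpha_2^{-(d_2+d_3)}r^{d_2+d_3}$, using $r\le\alpha_2$, and that is exactly the target.

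\textbf{Main obstacle.} The delicate point is the case where the cap $\epsilon$ is active, i.e. $r^2>\epsilon C'^2\alpha_1\alpha_2$. Then the factor $(\alpha_1/\alpha_2)^{d_3}$ from the Jacobian is no longer compensated. I expect this to be the genuine difficulty. My first attempt is to switch the roles in the commutator: use a different $\delta$, namely $\delta=1$, with $s$ and $t$ both of size $\sqrt{|z_k|}\le\sqrt\epsilon$. The cost is then $\alpha_2\sqrt{|z_k|}$, and the Jacobian is bounded below by $\frac12$. Equivalently, reach $\fr{m}_3$ directly with cost $\le\alpha_3|z|$, or with cost $\alpha_2\sqrt{|z|}$ via $[\fr{m}_1,\fr{m}_2]$ with unit $\delta$. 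Taking $|z_k|\le (r/(C'\alpha_2))^2$ yields a box of measure $\gtrsim (r/\alpha_2)^{d_2+2d_3}$, which again fails by the factor $(r/\alpha_2)^{d_3}$.

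So I would rather exploit $[\fr{m}_1,\fr{m}_3]=\fr{m}_2$ and view the target as a volume estimate on the base $G/H_1$, $H_1=N_G(\fr{h}_1)$ (Lemma \ref{LemmaRelaxed}). There the $\fr{m}_2\oplus\fr{m}_3$ growth at scale $r\le\alpha_2$ is Euclidean, of order $(r/\alpha_2)^{d_2}(r/\alpha_3)^{d_3}$ in $\mu_0$. This misses the $\alpha_3$ factor: the claimed bound needs $\alpha_3^{d_3}$ in the numerator, which means the fibers in the $\fr{m}_3$ direction must effectively have size $r/\alpha_2$ rather than $r/\alpha_3$. That is precisely what the commutator $\exp(sA)\exp(tB)$ with $s\sim1$ (a full $\fr{m}_1$ excursion, costing $\alpha_1\le r$) and $t\sim r/\alpha_2$ provides, since it produces $z\sim st\sim r/\alpha_2$ linearly. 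I will therefore replace the $\sqrt{|z_k|}$ parametrisation by $(s,t)=(\epsilon_1,z_k/\epsilon_1)$ for a fixed small constant $\epsilon_1$. In the analogue of Lemma \ref{CBH}, this makes $z\mapsto H^k$ smooth with derivative $\approx E_3^k$. Then $|z_k|\lesssim r/\alpha_2$ costs $\lesssim \alpha_1+\alpha_2|z_k|/\epsilon_1\lesssim r$, and the Jacobian is bounded below uniformly. The resulting box has measure $\gtrsim (r/\alpha_2)^{d_2+d_3}$, as required. The step I expect to be hardest is justifying the uniform diffeomorphism/Jacobian bound for this new parametrisation with $s$ fixed. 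I would handle it as in Lemma \ref{PartialSmoothness} and Proposition \ref{Vprime}, shrinking $\epsilon_1$ and the neighbourhood so that the differential stays close to the identity. Finally, the remaining range $\epsilon\alpha_2\le r\le\alpha_2$ follows by monotonicity of $V_{\alpha_I}$, exactly as at the end of the proof of Proposition \ref{HeisenbergLower}.
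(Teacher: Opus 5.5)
Your final construction works, but an error earlier in the proposal must be removed. In the first box, with $\delta=\alpha_1/\alpha_2$, you obtain $V_{\alpha_I}(r)\geq c\,(r/\alpha_2)^{d_2+2d_3}$. You then assert this is at least $c\,(r/\alpha_2)^{d_2+d_3}$ ``using $r\le\alpha_2$''. Since $r/\alpha_2\le 1$, the inequality goes the other way. So that box fails on the whole interval, not only when the cap $\epsilon$ is active. It has exactly the deficit $(r/\alpha_2)^{d_3}$ that you later find for $\delta=1$.

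Only the last construction, with $(s,t)=(\epsilon_1,z_k/\epsilon_1)$, carries the proof, and you should apply it to every $r\in[\alpha_1,\alpha_2]$. Because $r/\alpha_2\le 1$, a box with sides $\epsilon_2$, $\epsilon_2 r/\alpha_2$, $\epsilon' r/\alpha_2$ lies in a fixed neighbourhood for all such $r$. The closing ``monotonicity from $\epsilon\alpha_2$'' step is therefore unnecessary. As written it would even fail when $\alpha_1>\epsilon\alpha_2$, since nothing has been proved at the radius $\epsilon\alpha_2<\alpha_1$.

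With that cleanup the argument is sound:
\begin{itemize}
\item The map $\tilde F(X,Y,Z)=\pi\big(\exp X\exp Y\prod_k H^k(\epsilon_1,z_k/\epsilon_1)\big)$ is a single smooth map, independent of $\alpha_I$.
\item Its $z_k$-derivative at $0$ is $\epsilon_1^{-1}\sum_l\big(B^k_l-e^{-\epsilon_1\op{ad}_{A^k_l}}B^k_l\big)=E_3^k+O(\epsilon_1)$. So for small $\epsilon_1$ the inverse function theorem gives a fixed neighbourhood on which $\tilde F$ is a diffeomorphism with Jacobian bounded below. No square roots or uniformity in a parameter are involved.
\item Make the cost constants explicit. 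With $N,M_0$ as in the proof of Lemma \ref{distancelemma}, each factor $H^k(\epsilon_1,z_k/\epsilon_1)$ costs at most $2NM_0(\epsilon_1\alpha_1+\alpha_2|z_k|/\epsilon_1)\le 2NM_0(\epsilon_1+\epsilon'/\epsilon_1)r$ when $|z_k|\le \epsilon' r/\alpha_2$.
\item Here $\epsilon_1$ must genuinely be small, because $\alpha_1\le r$ is your only control on the $\fr{m}_1$-excursion and $r$ cannot be rescaled afterwards. You must also take $\epsilon'\ll\epsilon_1$.
\end{itemize}

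The paper reaches the same bound without a new map: it chooses $\delta=r/\alpha_2$ instead of $\alpha_1/\alpha_2$ in the existing family $F^{\delta}$. It uses the box $[-\epsilon,\epsilon]^{d_1}\times[-\epsilon r/\alpha_2,\epsilon r/\alpha_2]^{d_2}\times[-\epsilon^2,\epsilon^2]^{d_3}$. Then $s=\pm\sqrt{|z_k|}\le\epsilon$ and $t=\delta\sqrt{|z_k|}\le \epsilon r/\alpha_2$. Lemma \ref{distancelemma}, with $\alpha_1+\delta\alpha_2=\alpha_1+r\le 2r$, bounds the cost by $\epsilon(\sqrt{d_1}+\sqrt{d_2}+Cd_3)r$. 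The missing factor $(r/\alpha_2)^{d_3}$ is supplied by the Jacobian $\delta^{d_3}$ from Proposition \ref{Vprime}.

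Geometrically the two constructions coincide: an $O(1)$ excursion in $\fr{m}_1$ against an $O(r/\alpha_2)$ excursion in $\fr{m}_2$ produces an $\fr{m}_3$-displacement of order $r/\alpha_2$. The paper encodes the scale $r/\alpha_2$ in $\delta$ and reuses the $\delta$-uniform Proposition \ref{Vprime}, so its proof is a few lines. You encode the scale in the box, which requires a new, though simpler, Jacobian lemma.
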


\begin{proof} Let $V$ be the open ball given by Proposition \ref{Vprime}.  Choose $\epsilon\in (0,1)$ sufficiently small so that for $\alpha_1\leq r\leq \alpha_2$, the box 

 \[ S_r:=\Big[-\epsilon,\epsilon\Big]^{d_1}\times \Big[-\epsilon \frac{r}{\alpha_2},\epsilon \frac{r}{\alpha_2}\Big]^{d_2}\times  \Big[-\epsilon^2,\epsilon^2\Big]^{d_3} \]

\noindent is contained in $V$, which is possible since $\frac{r}{\alpha_2}\leq 1$. By further decreasing $\epsilon$ if necessary, we may also assume that
  
 \[ \epsilon < \frac{1}{\sqrt{d_1}+\sqrt{d_2}+Cd_3}, \]

\noindent where $C$ is the constant given by Lemma \ref{distancelemma}. We put 

 \[ \delta:=\frac{r}{\alpha_2}\in (0,1].\]
  
 \noindent For any $(X,Y,Z)\in S_r$, with $X\in \fr{m}_1$, $Y\in \fr{m}_2$, $Z=(z_k)_{k=1}^{d_3}\in \fr{m}_3$, we have

 \[ \|X\|\leq \sqrt{d_1}\epsilon, \ \ \|Y\|\leq \sqrt{d_2}\epsilon \frac{r}{\alpha_2} \ \ \makebox{and}\ \ \sum_{k=1}^{d_3}{\sqrt{|z_k|}}\leq d_3\epsilon. \]

\noindent Then in view of the function $F^{\delta}$ defined in Section \ref{HeisenbergLowerSection}, Lemma \ref{distancelemma} yields

\begin{eqnarray*}d_{\alpha_I}\big(o,F^{\delta}(X,Y,Z)\big)&\leq &\alpha_1\|X\|+\alpha_2\|Y\|+\frac{C}{2}(\alpha_1+\delta \alpha_2)\sum_{k=1}^{d_3}{\sqrt{|z_k|}}\\ \\ 
   &\leq& \epsilon\sqrt{d_1}\alpha_1+\epsilon\sqrt{d_2}r+\frac{Cd_3\epsilon}{2}(\alpha_1+r).\end{eqnarray*}

\noindent Since $\alpha_1\leq r$, the above relation yields

 \[ d_{\alpha_I}\big(o,F^{\delta}(X,Y,Z)\big)\leq \epsilon (\sqrt{d_1}+\sqrt{d_2}+Cd_3)r< r,\]

\noindent and hence $F^{\delta}(S_r)\subset B_{\alpha_I}(o,r)$. Along with Proposition \ref{Vprime}, we obtain

\begin{eqnarray*} V_{\alpha_I}(r)&=&\mu_0(B_{\alpha_I}(o,r))\geq \mu_0(F^{\delta}(S_r))\geq \frac{1}{2}\delta^{d_3}m(S_r)\\ \\ 
&=&\frac{1}{2}\delta^{d_3}(2\epsilon)^{d_1+d_2}(2\epsilon^2)^{d_3}\big(\frac{r}{\alpha_2}\big)^{d_2}. \end{eqnarray*}

\noindent Since $\delta=\frac{r}{\alpha_2}$, the above relation yields 

 \[  V_{\alpha_I}(r)\geq C_5 \alpha_2^{d_1-n}r^{n-d_1}, \]

\noindent with $C_5:=\frac{1}{2}(2\epsilon)^{d_1+d_2}(2\epsilon^2)^{d_3}$. Multiplying by $\Lambda_{\alpha_I}$, we obtain the desired bound for the function $Vol_{\alpha_I}(r)=\Lambda_{\alpha_I}V_{\alpha_I}(r)$.\end{proof}

\subsection{Post sub-Riemannian upper bound}\label{PostSubRiemannianUpper} We will prove the corresponding upper bound.

\begin{prop}\label{postsubRiemannianupperprop} Let $(G/K,\alpha_I)$ be a homogeneous space satisfying Assumption \ref{Assumption}.  Then there exists a positive constant $C_6$, depending only on the space $G/K$, such that
   
   \[ Vol_{\alpha_I}(r)\leq C_6\alpha_1^{d_1}\big(\frac{\alpha_3}{\alpha_2}\big)^{d_3}r^{n-d_1}, \ \ 0< r\leq \alpha_2, \] 

\noindent uniformly in $\alpha_1\leq \alpha_2\leq \alpha_3$.   
   \end{prop}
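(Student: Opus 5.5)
The plan is to reduce the estimate to a ball estimate on a fixed base space, via the homogeneous fibration $G/K\rightarrow G/H$ whose fiber has tangent space $\fr{m}_1$. Since $Vol_{\alpha_I}(r)=\Lambda_{\alpha_I}V_{\alpha_I}(r)$ and $\Lambda_{\alpha_I}=\alpha_1^{d_1}\alpha_2^{d_2}\alpha_3^{d_3}$, the claim is equivalent to
\[ V_{\alpha_I}(r)=\mu_0(B_{\alpha_I}(o,r))\leq C_6\Big(\frac{r}{\alpha_2}\Big)^{d_2+d_3}, \ \ 0<r\leq \alpha_2 .\]
The key point is that this bound does not involve $\alpha_1$ or $\alpha_3$.

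\textbf{Step 1: the intermediate subgroup.} Set $H:=N_G(\fr{h}_1)$, where $\fr{h}_1=\fr{k}\oplus\fr{m}_1$. By Lemma \ref{LemmaRelaxed}, $H$ is a closed subgroup of $G$ containing $K$, and its Lie algebra is exactly $\fr{h}_1$. Hence $H$ is compact, $G/H$ is a compact homogeneous manifold of dimension $d_2+d_3=n-d_1$, and $T_{o'}(G/H)\cong\fr{m}_2\oplus\fr{m}_3$, where $o'=eH$. The projection $p:G/K\rightarrow G/H$, $xK\mapsto xH$, is $G$-equivariant, and its differential at $o$ is the $Q$-orthogonal projection $\fr{m}\rightarrow\fr{m}_2\oplus\fr{m}_3$.

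\textbf{Step 2: distance comparison.} Endow $G/H$ with the normal metric $\left.Q\right|_{(\fr{m}_2\oplus\fr{m}_3)}$, and denote its distance by $d_Q'$ and its volume by $\mu'$. For $W=W_1+W_2+W_3\in\fr{m}$, we have
\[ \alpha_2^2\|W_2+W_3\|^2\leq \alpha_2^2\|W_2\|^2+\alpha_3^2\|W_3\|^2\leq \alpha_I(W,W). \]
By $G$-invariance this pointwise inequality holds everywhere, so $p$ is $1$-Lipschitz from $(G/K,\alpha_I)$ to $(G/H,\alpha_2^2Q)$. Since $d$ for $\alpha_2^2Q$ equals $\alpha_2 d_Q'$, this gives
\[ p\big(B_{\alpha_I}(o,r)\big)\subseteq B_{Q}'\big(o',\tfrac{r}{\alpha_2}\big), \qquad\text{hence}\qquad B_{\alpha_I}(o,r)\subseteq p^{-1}\big(B_Q'(o',\tfrac{r}{\alpha_2})\big). \]

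\textbf{Step 3: volume of preimages.} For the normal metrics, $p:(G/K,Q|_{\fr{m}})\rightarrow(G/H,Q)$ is a Riemannian submersion, because the horizontal space $\fr{m}_2\oplus\fr{m}_3$ maps isometrically. Its fibers $xH/K$ are all isometric to $H/K$ via left translations, so they all have the same volume $v_0$, which depends only on $G/K$. The coarea formula for Riemannian submersions (Fubini along fibers) then gives
\[ \mu_0\big(p^{-1}(A)\big)=v_0\,\mu'(A) \]
for every measurable $A\subseteq G/H$. This is the step I expect to need the most care: one must check that the fibers are compact submanifolds of a common volume, even if $H/K$ is disconnected. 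This holds because $H$ is compact and the translations $\tau_x$ are $\mu_0$-isometries.

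\textbf{Step 4: conclusion.} Since $r\leq\alpha_2$, we have $s:=r/\alpha_2\leq 1$. On the fixed compact Riemannian manifold $(G/H,Q)$ there is a constant $C'$ with $\mu'(B'_Q(o',s))\leq C's^{d_2+d_3}$ for all $0<s\leq 1$. This follows, for instance, from Bishop–Gromov with the fixed lower Ricci bound of $G/H$, or simply from smoothness of the volume function near $0$ together with compactness. Combining Steps 2–4 yields
\[ V_{\alpha_I}(r)\leq v_0C'\Big(\frac{r}{\alpha_2}\Big)^{n-d_1}. \]
Multiplying by $\Lambda_{\alpha_I}$ then gives the proposition with $C_6=v_0C'$, uniformly in $\alpha_1\leq\alpha_2\leq\alpha_3$.
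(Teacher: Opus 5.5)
Your proposal is correct and follows essentially the same route as the paper. The paper also takes $H=N_G(\fr{k}\oplus\fr{m}_1)$ via Lemma \ref{LemmaRelaxed}, shows $B_{\alpha_I}(o,r)\subseteq f^{-1}(B_{Q_{\fr{q}}}(eH,r/\alpha_2))$, and combines the equal-fiber-volume Fubini identity (Corollary \ref{RiemannianSubVol}) with a small-ball volume bound on $G/H$, all packaged as Proposition \ref{FibrationProp}. The only differences are cosmetic: you get the ball inclusion from the pointwise inequality $p^*(\alpha_2^2Q_{\fr{q}})\leq\alpha_I$ rather than from the horizontal component of a minimizing geodesic, and you absorb the range $\alpha_2r_0\leq r\leq\alpha_2$ into a base-space estimate valid for all $s\leq 1$, whereas the paper treats that range separately using the bound $\mu_0(G/K)$.
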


We first recall the following Fubini formula for Riemannian submersions (see e.g. \cite{Sakai}, Theorem 5.6).

\begin{lemma}\label{FubIN}
Let $\pi:(M,g_M)\rightarrow(N,g_N)$ be a Riemannian submersion. Then for any integrable function $\phi:M\rightarrow\mathbb R$, the restriction $\left.\phi\right|_{\pi^{-1}(y)}$ is integrable for almost every $y\in N$, and we have

\[ \int_M{\phi d\mu_{g_M}}=\int_N{\bigg(\int_{\pi^{-1}(y)}{\left.\phi\right|_{\pi^{-1}(y)}(x)d\mu_{\pi^{-1}(y)}(x)}\bigg)d\mu_{g_N}(y)}, \]

\noindent where $\mu_{\pi^{-1}(y)}$ denotes the Riemannian volume measure induced by $g_M$ on the fiber $\pi^{-1}(y)$.
\end{lemma}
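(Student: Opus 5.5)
The plan is to reduce the statement to the classical Fubini theorem on product charts, using the fact that a Riemannian submersion has normal Jacobian identically equal to $1$. Equivalently, the statement is the coarea formula for the smooth map $\pi$, in which the weight $\sqrt{\det(d\pi\, d\pi^{*})}$ is $1$ because $d\pi_x$ restricted to the horizontal space $\mathcal H_x=(\ker d\pi_x)^{\perp}$ is an isometry onto $T_{\pi(x)}N$. I will give a self-contained local argument.

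\textbf{Step 1: local splitting of the volume form.} Fix $x_0\in M$. By the submersion theorem there is a chart $W\cong U\times Z$ around $x_0$, with $U\subseteq N$ open, $Z\subseteq\mathbb R^{k}$ open and $k=\dim M-\dim N$, in which $\pi(y,z)=y$. Each slice $\{y\}\times Z$ is then an open piece of the fiber $\pi^{-1}(y)$. At a point $x$, choose a $g_M$-orthonormal basis consisting of horizontal vectors $h_1,\dots,h_m$ and vertical vectors $v_1,\dots,v_k$. Since $d\pi_x(h_i)$ is orthonormal in $T_{\pi(x)}N$ and $d\pi_x(v_j)=0$, we get
\[ d\mu_{g_M}=\pi^{*}(d\mu_{g_N})\wedge \nu \]
as densities. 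Here $\nu$ is any form whose restriction to each fiber is that fiber's Riemannian volume density; evaluating both sides on $(h_1,\dots,h_m,v_1,\dots,v_k)$ gives $1$. The key point is that the value on the mixed basis is independent of how $\nu$ is extended off vertical vectors, because $\pi^{*}d\mu_{g_N}$ already annihilates vertical vectors.

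\textbf{Step 2: Fubini in the chart.} In the coordinates $(y,z)$, Step 1 says that the density of $\mu_{g_M}$ equals the density of $\mu_{g_N}$ in $y$ times the density of $\mu_{\pi^{-1}(y)}$ in $z$. Hence, for $\phi$ integrable and supported in $W$, the classical Fubini--Tonelli theorem on $U\times Z$ gives two things:
\begin{enumerate}
\item $z\mapsto\phi(y,z)$ is integrable for a.e.\ $y$;
\item $\int_W\phi\, d\mu_{g_M}=\int_U\int_{Z}\phi(y,z)\,d\mu_{\pi^{-1}(y)}\,d\mu_{g_N}(y)$.
\end{enumerate}

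\textbf{Step 3: globalization.} Cover $M$ by countably many such charts $W_i$ (using second countability) and take a subordinate partition of unity $\{\rho_i\}$. First treat $\phi\ge 0$ measurable. Applying Step 2 to $\rho_i\phi$ and summing with monotone convergence, both on $M$ and fiberwise, yields the formula in $[0,\infty]$. The identity $\sum_i\int_{\pi^{-1}(y)}\rho_i\phi=\int_{\pi^{-1}(y)}\phi$ holds because the $\rho_i$ restricted to a fiber form a partition of unity on that fiber. Applying the nonnegative case to $|\phi|$ shows that $\phi|_{\pi^{-1}(y)}$ is integrable for a.e.\ $y$. Splitting a general integrable $\phi=\phi^{+}-\phi^{-}$ then finishes the proof.

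\textbf{Main obstacle.} The only genuinely delicate points are measure-theoretic bookkeeping. The first is measurability of $y\mapsto\int_{\pi^{-1}(y)}\phi$; this comes from Tonelli in each chart together with countable sums. The second is that fibers may meet a chart $W_i$ in many slices. This causes no problem: each chart is handled separately, and each slice $\{y\}\times Z$ lies in the single fiber $\pi^{-1}(y)$, so summing over charts recovers the full fiber integral. The geometric content is entirely the density identity in Step 1, which is immediate from the definition of a Riemannian submersion.
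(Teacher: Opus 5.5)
Your argument is correct. Note that the paper contains no proof of this lemma to compare against: it quotes the statement from Sakai's book (Theorem 5.6 there) and uses it as a black box, both in Corollary \ref{RiemannianSubVol} and in the proof of Theorem \ref{PoincareTheorem}.

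What you supply is the standard self-contained proof of the cited result:
\begin{itemize}
\item the pointwise density identity $d\mu_{g_M}=\pi^{*}(d\mu_{g_N})\wedge\nu$, which says that a Riemannian submersion has normal Jacobian $1$, so this is the coarea formula with trivial weight;
\item Fubini--Tonelli in a submersion chart $W\cong U\times Z$;
\item a countable partition of unity to globalize.
\end{itemize}
The citation buys brevity. Your route makes the paper self-contained at the cost of routine measure theory. It also shows that only the isometry $\left.d\pi_x\right|_{\mathcal{H}_x}$ is used. Compactness and equal fiber volumes are not needed for the lemma itself; they enter only in Corollary \ref{RiemannianSubVol}.

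Two small remarks.
\begin{enumerate}
\item In a submersion chart with $\pi(y,z)=y$, one has $W\cap\pi^{-1}(y)=\{y\}\times Z$ exactly. So a fiber meets each chart in a single slice, and the ``many slices'' concern does not arise inside one chart. Overlaps between different charts are precisely what the partition of unity handles.
\item In Step 3 you apply Step 2 to nonnegative measurable, not necessarily integrable, functions $\rho_i\phi$. Step 2 should therefore be stated in its Tonelli form. If $\phi$ is only measurable for the completed Riemannian measure, use the completed product measure; this is exactly why the fiberwise restrictions are measurable and integrable only for almost every $y$, as the statement asserts.
\end{enumerate}
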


\begin{corol}\label{RiemannianSubVol}Let $\pi:(M,g_M)\rightarrow(N,g_N)$ be a Riemannian submersion between compact Riemannian manifolds, such that all fibers have the same volume $v$. Then, for every measurable $S\subseteq N$, it holds

\[\mu_{g_M}(\pi^{-1}(S))=v\mu_{g_N}(S).\] 
 
 \end{corol}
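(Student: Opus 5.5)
The plan is to apply Lemma \ref{FubIN} with $\phi=\chi_{\pi^{-1}(S)}$, the indicator function of $\pi^{-1}(S)$. Since $S$ is measurable and $\pi$ is continuous, $\pi^{-1}(S)$ is measurable. Because $M$ is compact, it has finite volume, so $\phi$ is integrable. The left-hand side of the Fubini formula is then
\[
\int_M \chi_{\pi^{-1}(S)}\,d\mu_{g_M}=\mu_{g_M}(\pi^{-1}(S)).
\]

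Next I compute the inner integral over each fiber. For $y\in N$, the restriction $\left.\phi\right|_{\pi^{-1}(y)}$ is identically $1$ when $y\in S$ and identically $0$ when $y\notin S$, since $x\in\pi^{-1}(y)$ lies in $\pi^{-1}(S)$ exactly when $y\in S$. The inner integral is therefore $\chi_S(y)\,\mu_{\pi^{-1}(y)}(\pi^{-1}(y))=v\,\chi_S(y)$, by the hypothesis that every fiber has volume $v$. The outer integral then becomes $\int_N v\chi_S\,d\mu_{g_N}=v\mu_{g_N}(S)$, which is the claimed identity.

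The only point needing care is the measure-theoretic bookkeeping. Lemma \ref{FubIN} guarantees integrability of the fiber restrictions only for almost every $y$. Here, however, the fiber restrictions are constant functions on compact fibers (closed submanifolds of the compact $M$), so they are integrable for every $y$ and no null-set issue arises. I expect no real obstacle; the only slightly subtle point is noting that fibers are compact embedded submanifolds with the induced volume $v$, and this is exactly what the hypothesis supplies.
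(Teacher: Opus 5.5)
Your proposal is correct and is essentially the paper's proof: you apply Lemma \ref{FubIN} to the indicator of $\pi^{-1}(S)$, which is integrable because $M$ is compact, and observe that the fiber integral equals $v\,\mathbf{1}_S(y)$ for every $y$. One small precision: continuity of $\pi$ only gives measurability of $\pi^{-1}(S)$ for Borel $S$, so for Lebesgue-measurable $S$ you should instead use that $\pi$ is a submersion (locally a projection), which implies that preimages of null sets are null.
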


\begin{proof} Since $M$ is compact, it has finite volume, and thus the function
$\phi:=\mathbf{1}_{\pi^{-1}(S)}$ is integrable on $M$. By Lemma \ref{FubIN}, we obtain

\[\mu_{g_M}(\pi^{-1}(S))=\int_S{\mu_{\pi^{-1}(y)}(\pi^{-1}(y))d\mu_{g_N}(y)}=v\mu_{g_N}(S).\]
\end{proof}

Proposition \ref{postsubRiemannianupperprop} will follow from the more general result below.

\begin{prop}\label{FibrationProp} Let $G$ be a compact connected Lie group and let $K\subset H\subset G$ be closed subgroups with Lie algebras $\fr{k}\subset\fr{h}\subset\fr{g}$ respectively. Fix an $\op{Ad}$-invariant inner product $Q$ on $\fr{g}$ and consider the $Q$-orthogonal reductive decomposition

\[\fr{g}=\fr{k}\oplus\underbrace{\fr{p}\oplus\fr{q}}_{\fr{m}},\]

\noindent where $\fr{h}=\fr{k}\oplus\fr{p}$. Let $g$ be a $G$-invariant Riemannian metric on $G/K$ such that $g(\fr{p},\fr{q})=\{0\}$, and

\begin{equation}\label{assumption}
\left.g\right|_{\fr{q}\times\fr{q}}\geq\alpha^2\left.Q\right|_{\fr{q}\times\fr{q}},
\end{equation}

\noindent for some $\alpha>0$. Then there exist positive constants $C$ and $r_0$, depending only on $G$, $K$, $H$ and $Q$, such that

\[\mu_0(B_g(o,r))\leq C\big(\frac{r}{\alpha}\big)^{\dim G/H}, \ \ 0<r\leq\alpha r_0.\]

\noindent Here $\mu_0$ denotes the Riemannian volume measure induced by $\left.Q\right|_{\fr{m}\times\fr{m}}$ on $G/K$.
\end{prop}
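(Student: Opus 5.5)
The plan is to push the problem down to $G/H$ through the projection $p:G/K\to G/H$, $xK\mapsto xH$, which has fibers $xH/K$, and then to apply Corollary \ref{RiemannianSubVol} to the normal metrics.

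\textbf{Step 1: the two normal metrics.} Give $G/H$ the normal metric $\check Q:=\left.Q\right|_{\fr{q}\times\fr{q}}$, using $T_{eH}(G/H)=\fr{q}$. With $G/K$ carrying $\left.Q\right|_{\fr{m}\times\fr{m}}$, the map $p$ is a Riemannian submersion. Its vertical space at $o$ is $\fr{p}$ and its horizontal space is $\fr{q}$. By $G$-equivariance, all fibers are isometric to $H/K$ with the metric induced by $\left.Q\right|_{\fr{p}\times\fr{p}}$. They therefore have a common volume $v>0$ depending only on $G,K,H,Q$. Corollary \ref{RiemannianSubVol} then gives $\mu_0(p^{-1}(S))=v\,\mu_{\check Q}(S)$ for every measurable $S\subseteq G/H$.

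\textbf{Step 2: distance contraction.} Let $\check g$ be the $G$-invariant metric on $G/H$ induced by $\left.g\right|_{\fr{q}\times\fr{q}}$. This is $\op{Ad}_H$-invariant, but I will not need that. The key claim is that $d_{\check Q}(p(x),p(y))\leq \alpha^{-1} d_g(x,y)$.

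Take a curve $c$ in $G/K$ and write $\dot c(t)=d\tau_{\gamma(t)}(W(t))$ with $W(t)\in\fr{m}$. Then $\frac{d}{dt}p(c(t))=d\tau_{\gamma(t)}(W_{\fr{q}}(t))$, since $\fr{p}$ projects to zero in $T(G/H)$. Combining $g(\fr{p},\fr{q})=0$ with \eqref{assumption}, and using the $G$-invariance of both metrics, gives
\[
\|\tfrac{d}{dt}p(c)\|_{\check Q}=\|W_{\fr{q}}\|\leq \alpha^{-1}\|W_{\fr{q}}\|_g\leq \alpha^{-1}\|W\|_g=\alpha^{-1}\|\dot c\|_g .
\]
Integrating along the curve and taking the infimum over curves yields the claim.

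\textbf{Step 3: the volume bound.} By Step 2, $p(B_g(o,r))\subseteq B_{\check Q}(eH,r/\alpha)$, so $B_g(o,r)\subseteq p^{-1}\big(B_{\check Q}(eH,r/\alpha)\big)$. Step 1 then gives
\[
\mu_0(B_g(o,r))\leq v\,\mu_{\check Q}\big(B_{\check Q}(eH,r/\alpha)\big).
\]
The metric $\check Q$ is one fixed metric on the compact manifold $G/H$. Hence there are $r_0>0$ and $C'>0$ with $\mu_{\check Q}(B_{\check Q}(eH,s))\leq C's^{\dim G/H}$ for $0<s\leq r_0$. This follows from normal coordinates, or from Bishop--Gromov using the lower Ricci bound of a fixed compact manifold. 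In fact the bound holds for all $s>0$ after enlarging $C'$, because the total volume is finite, but we only need it for $s=r/\alpha\leq r_0$. Setting $C:=vC'$ completes the argument.

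\textbf{Main obstacle.} The delicate point is Step 2. There one must verify that the horizontal–vertical splitting used for $p$ at an arbitrary point $c(t)$ is compatible with the splitting $\fr{p}\oplus\fr{q}$ transported by $\tau_{\gamma(t)}$. This is where $G$-invariance of $g$ and $\check Q$ is needed.

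A secondary point is that $H/K$ may be disconnected, or $H$ itself may be disconnected. This does not matter: the fiber volume is still finite and constant, and Corollary \ref{RiemannianSubVol} only requires constant fiber volume.
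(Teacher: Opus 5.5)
Your proposal is correct and follows essentially the same route as the paper. Both use the normal-metric Riemannian submersion $G/K\to G/H$ and the pointwise estimate that projected curves have $Q_{\fr{q}}$-length at most $\alpha^{-1}$ times their $g$-length, which places $B_g(o,r)$ inside the preimage of the $r/\alpha$-ball at $eH$. They then finish with the constant-fibre-volume identity from Corollary \ref{RiemannianSubVol} and a fixed small-ball bound on $G/H$.

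One aside should be dropped: $\left.g\right|_{\fr{q}\times\fr{q}}$ is in general \emph{not} $\op{Ad}_H$-invariant, so $\check g$ need not exist. For example, take $g=\alpha_I$, $H$ with Lie algebra $\fr{k}\oplus\fr{m}_1$, and $\alpha_2\neq\alpha_3$; then $\op{ad}_{\fr{m}_1}$ mixes $\fr{m}_2$ and $\fr{m}_3$. Since you never use $\check g$, the argument is unaffected.
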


\begin{proof} Consider the homogeneous fibration $H/K\rightarrow G/K\xrightarrow{f}G/H$, where $f:G/K\rightarrow G/H$ is the projection $xK\mapsto xH$, $x\in G$. Observe that $\fr{m}=T_{o}(G/K)$, $\fr{q}=T_{eH}(G/H)$ and $\fr{p}=T_{eK}(H/K)$. Denote by $Q_{\fr{m}}$ the $G$-invariant metric on $G/K$ induced by $\left.Q\right|_{\fr{m}\times\fr{m}}$. Since $\fr{q}$ and $\fr{p}$ are $\op{Ad}_H$-invariant and $\op{Ad}_K$-invariant respectively, the restrictions of $Q$ on the subspaces $\fr{q}$ and $\fr{p}$ define a $G$-invariant metric on the base $G/H$ and an $H$-invariant metric on the fiber $H/K$ respectively. We denote those metrics by $Q_{\fr{q}}$ and $Q_{\fr{p}}$, respectively.

\noindent The map $f:(G/K,Q_{\fr{m}})\rightarrow(G/H,Q_{\fr{q}})$ is a Riemannian submersion. The vertical and horizontal distributions $\mathcal{V}$ and $\mathcal{H}$ of $f$ are the $G$-invariant distributions induced by $\fr{p}$ and $\fr{q}$ respectively. Then the hypothesis $g(\fr{p},\fr{q})=\{0\}$ and assumption \eqref{assumption} can be restated as

\begin{equation}\label{assumption2}
g(\mathcal{V},\mathcal{H})=\{0\} \ \ \makebox{and} \ \ g(X,X)\geq\alpha^2Q_{\fr{m}}(X,X), \ \ X\in\mathcal{H}.
\end{equation}

\noindent Now choose $x\in B_g(o,r)$. Given that the space $(G/K,g)$ is complete, we can choose a minimizing (constant speed) geodesic $\gamma:[0,1]\rightarrow G/K$, joining $o$ with $x$. For each $t\in[0,1]$, decompose $\dot{\gamma}(t)=h(t)+v(t)$ into horizontal and vertical components, where $h(t)\in\mathcal{H}_{\gamma(t)}$ and $v(t)\in\mathcal{V}_{\gamma(t)}$. In view of relation \eqref{assumption2}, we have

\[g(\dot{\gamma}(t),\dot{\gamma}(t))\geq g(h(t),h(t))\geq\alpha^2Q_{\fr{m}}(h(t),h(t)). \]

\noindent Since $f$ is a Riemannian submersion, $(df)_{\gamma(t)}$ defines an isometry from $\mathcal{H}_{\gamma(t)}$ to $T_{f(\gamma(t))}(G/H)$, and thus

\[ Q_{\fr{q}}\bigg(\frac{d}{dt}(f\circ\gamma)(t),\frac{d}{dt}(f\circ\gamma)(t)\bigg)
=Q_{\fr{m}}(h(t),h(t)).\]

\noindent By the above two relations, we obtain

\[ \op{Length}_{Q_{\fr{q}}}(f\circ\gamma)\leq\frac{1}{\alpha}\op{Length}_g(\gamma)<\frac{r}{\alpha}. \]

\noindent Since $(f\circ\gamma)(0)=eH$ and $(f\circ\gamma)(1)=f(x)$, it follows that $d_{Q_{\fr{q}}}(f(x),eH)<\frac{r}{\alpha}$, and thus

\begin{equation}\label{Sat} B_g(o,r)\subseteq f^{-1}\bigg(B_{Q_{\fr{q}}}\big(eH,\frac{r}{\alpha}\big)\bigg).\end{equation}

\noindent Since the metric $Q_{\fr{m}}$ on $G/K$ is $G$-invariant, left translations by elements of $G$ map the fiber $H/K=f^{-1}(eH)$ isometrically onto the other fibers of $f$. Thus all fibers are isometric to $(H/K,Q_{\fr{p}})$, and hence have the same volume $\mu_{Q_{\fr{p}}}(H/K)$. Then Corollary \ref{RiemannianSubVol} yields

\begin{equation}\label{Sat1}\mu_0(f^{-1}(S))=\mu_{Q_{\fr{p}}}(H/K)\mu_{Q_{\fr{q}}}(S),\end{equation}

\noindent for every measurable $S\subseteq G/H$. Moreover, since the volume of a Riemannian ball in $(G/H,Q_{\fr{q}})$ is comparable to the volume of a Euclidean ball for sufficiently small $r$, there exist positive constants $C^{\prime}$ and $r_0$, depending only on $G$, $K$, $H$ and $Q$, such that

\begin{equation}\label{Sat2} \mu_{Q_{\fr{q}}}(B_{Q_{\fr{q}}}(eH,r))\leq C^{\prime}r^{\dim G/H}, \ \ 0<r\leq r_0. \end{equation}

\noindent Hence for $0<r\leq\alpha r_0$, relations \eqref{Sat}-\eqref{Sat2} yield

\begin{eqnarray*}
\mu_0(B_g(o,r))&\leq&\mu_0\bigg(f^{-1}\bigg(B_{Q_{\fr{q}}}\big(eH,\frac{r}{\alpha}\big)\bigg)\bigg)=\mu_{Q_{\fr{p}}}(H/K)\mu_{Q_{\fr{q}}}\bigg(B_{Q_{\fr{q}}}\big(eH,\frac{r}{\alpha}\big)\bigg)\\
&\leq&\mu_{Q_{\fr{p}}}(H/K)C^{\prime}\big(\frac{r}{\alpha}\big)^{\dim G/H}.
\end{eqnarray*}

\noindent The desired result follows by setting $C:=\mu_{Q_{\fr{p}}}(H/K)C^{\prime}$.
\end{proof}

We proceed to prove Proposition \ref{postsubRiemannianupperprop}.\\

\noindent \emph{Proof of Proposition \ref{postsubRiemannianupperprop}.} In view of Proposition \ref{FibrationProp}, we let $Q$ be the fixed $\op{Ad}$-invariant inner product on $\fr{g}$, and we set $\fr{h}:=\fr{k}\oplus \fr{m}_1$, 
$\fr{p}:=\fr{m}_1$, $\fr{q}:=\fr{m}_2\oplus \fr{m}_3$
and $g:=\alpha_I$.  By part 2. of Lemma \ref{LemmaRelaxed}, the group $H:=N_G(\fr{h})$ is a closed Lie subgroup of $G$ with Lie algebra $\fr{h}$, such that $K\subset H\subset G$. Consider the decomposition 
    
    \[ \fr{g}=\fr{k}\oplus \underbrace{\fr{p}\oplus \fr{q}}_{\fr{m}}.\]
\noindent We have $g(\fr{p},\fr{q})=\{0\}$ and, since $\alpha_2\leq \alpha_3$, it holds $g(X,X)\geq \alpha_2^2Q(X,X)$ for all $X\in \fr{q}$. By Proposition \ref{FibrationProp}, there exist positive constants $C$ and $r_0$, depending only on $G$, $K$, $H$ and $Q$, such that 

    \[ V_{\alpha_I}(r)\leq C\big(\frac{r}{\alpha_2}\big)^{n-d_1}, \ \ 0<r\leq \alpha_2r_0. \]

  \noindent Now if $1\leq r_0$, the proof is complete after multiplying the above inequality by $\Lambda_{\alpha_I}$. Otherwise, for $\alpha_2r_0\leq r\leq \alpha_2$, we have

 \[ V_{\alpha_I}(r)\leq \mu_0(G/K)\leq \frac{\mu_0(G/K)}{r_0^{n-d_1}}\big(\frac{r}{\alpha_2}\big)^{n-d_1}, \]

 \noindent where we have used the fact that $G$ is compact so that $\mu_0(G/K)<\infty$.  Setting 
 
 \[ C_6:=\max\bigg\{C,\frac{\mu_0(G/K)}{r_0^{n-d_1}}\bigg\}, \]
 
 \noindent and multiplying the above inequality by $\Lambda_{\alpha_I}$, yields the desired estimate for $0<r\leq \alpha_2$. Observe that under Assumption \ref{Assumption}, the constants $C$ and $r_0$, and thus the constant $C_6$, depend only on the space $G/K$ and the fixed ordering of the subspaces $\fr{m}_i$. In particular, they do not depend on the metric $\alpha_I$. \qed

\section{Proof of theorems \ref{maintheorem} and \ref{VolumeGrowth}}\label{MainProofsSection}

First, we need the following.

\begin{lemma}\label{FixBound}Let $(G/K,\alpha_I)$ be a homogeneous space satisfying Assumption \ref{Assumption}.  Then there exists a constant $C_7$, depending only on the space $G/K$, such that 
  \[ Vol_{\alpha_I}(r)\leq C_7\big(\frac{\alpha_3}{\alpha_1\alpha_2}\big)^{d_3}r^{n+d_3}, \ \ \frac{\alpha_1\alpha_2}{\alpha_3}\leq r\leq \alpha_1,\]
 
 \noindent uniformly in $\alpha_1\leq \alpha_2\leq \alpha_3$.\end{lemma}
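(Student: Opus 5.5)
We extend Proposition \ref{UpperHeisenbergProp}, which gives the bound on $\frac{\alpha_1\alpha_2}{\alpha_3}\leq r\leq \eta\alpha_1$, to the remaining range $\eta\alpha_1\leq r\leq \alpha_1$. The tools are monotonicity of the volume function together with the post sub-Riemannian upper bound of Proposition \ref{postsubRiemannianupperprop}. We assume $\eta\alpha_1 < r\leq \alpha_1$ and $r\geq \frac{\alpha_1\alpha_2}{\alpha_3}$, since otherwise Proposition \ref{UpperHeisenbergProp} already applies.

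On this range Proposition \ref{postsubRiemannianupperprop} applies because $r\leq \alpha_1\leq\alpha_2$. It gives
\[ Vol_{\alpha_I}(r)\leq C_6\alpha_1^{d_1}\Big(\frac{\alpha_3}{\alpha_2}\Big)^{d_3}r^{n-d_1}. \]
We compare the right-hand side with the target $\big(\frac{\alpha_3}{\alpha_1\alpha_2}\big)^{d_3}r^{n+d_3}$. Their ratio is
\[ \frac{\alpha_1^{d_1}\alpha_3^{d_3}\alpha_2^{-d_3}r^{n-d_1}}{\alpha_3^{d_3}\alpha_1^{-d_3}\alpha_2^{-d_3}r^{n+d_3}}=\Big(\frac{\alpha_1}{r}\Big)^{d_1+d_3}. \]
Since $r\geq \eta\alpha_1$, this ratio is at most $\eta^{-(d_1+d_3)}$. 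Hence
\[ Vol_{\alpha_I}(r)\leq C_6\eta^{-(d_1+d_3)}\Big(\frac{\alpha_3}{\alpha_1\alpha_2}\Big)^{d_3}r^{n+d_3}, \qquad \eta\alpha_1\leq r\leq\alpha_1. \]

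Taking $C_7:=\max\{C_4,\,C_6\eta^{-(d_1+d_3)}\}$ combines the two ranges. The constants $\eta$, $C_4$ and $C_6$ depend only on $G/K$, so $C_7$ does as well. The statement holds vacuously when $\frac{\alpha_1\alpha_2}{\alpha_3}>\alpha_1$, which cannot happen anyway since $\alpha_2\leq\alpha_3$. The argument requires no new estimates; the only point that needs checking is the exponent bookkeeping in the ratio above.
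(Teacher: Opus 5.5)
Your proposal is correct and is essentially the paper's proof: Proposition \ref{UpperHeisenbergProp} covers $\frac{\alpha_1\alpha_2}{\alpha_3}\leq r\leq \eta\alpha_1$, Proposition \ref{postsubRiemannianupperprop} combined with $(\alpha_1/r)^{d_1+d_3}\leq \eta^{-(d_1+d_3)}$ covers $\eta\alpha_1\leq r\leq \alpha_1$, and $C_7:=\max\{C_4,C_6\eta^{-(d_1+d_3)}\}$. One small point: you list monotonicity of the volume function among your tools, but the argument never uses it.
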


\begin{proof} By Proposition \ref{UpperHeisenbergProp}, we have the estimate 

\[ Vol_{\alpha_I}(r)\leq C_4\big(\frac{\alpha_3}{\alpha_1\alpha_2}\big)^{d_3}r^{n+d_3}, \ \ \frac{\alpha_1\alpha_2}{\alpha_3}\leq r \leq \eta\alpha_1. \]

\noindent  Assume that $\eta \alpha_1\leq r\leq \alpha_1$. By Proposition \ref{postsubRiemannianupperprop}, we have

\[Vol_{\alpha_I}(r)\leq C_6\alpha_1^{d_1}\left(\frac{\alpha_3}{\alpha_2}\right)^{d_3}r^{n-d_1}
=C_6\left(\frac{\alpha_1}{r}\right)^{d_1+d_3}\left(\frac{\alpha_3}{\alpha_1\alpha_2}\right)^{d_3}r^{n+d_3}, \ \ 0<r\leq \alpha_2.
\]

\noindent Since $r\geq \eta\alpha_1$, we have $(\frac{\alpha_1}{r})^{d_1+d_3}\leq \eta^{-(d_1+d_3)}$, and thus

\[Vol_{\alpha_I}(r)\leq C_6\eta^{-(d_1+d_3)}\left(\frac{\alpha_3}{\alpha_1\alpha_2}\right)^{d_3}r^{n+d_3}, \ \ \eta\alpha_1\leq r\leq \alpha_1,\]

\noindent which yields the desired estimate for $C_7:=\max\{C_4,C_6\eta^{-(d_1+d_3)}\}$.
\end{proof}

Having established the bounds in the corresponding radius intervals, we are ready to prove Theorem \ref{VolumeGrowth}.\\

\noindent \emph{Proof of Theorem \ref{VolumeGrowth}.} For the interval $0<r\leq \frac{\alpha_1\alpha_2}{\alpha_3}$, Corollary  \ref{CorolEuclideanFinal} yields

\[ C_1r^n\leq Vol_{\alpha_I}(r)\leq C_2r^n. \]

\noindent Thus the definition of $\widetilde{Vol}_{\alpha_I}(r)$ yields 

\begin{equation}\label{Growth1}C_1\leq \frac{Vol_{\alpha_I}(r)}{\widetilde{Vol}_{\alpha_I}(r)}\leq C_2, \ \ 0<r\leq \frac{\alpha_1\alpha_2}{\alpha_3}. \end{equation}

\noindent For the interval $\frac{\alpha_1\alpha_2}{\alpha_3}\leq r \leq \alpha_1$, Proposition \ref{HeisenbergLower}, Lemma \ref{FixBound} and the definition of $\widetilde{Vol}_{\alpha_I}(r)$ give 

\begin{equation}\label{Kioski1} C_3\leq \frac{Vol_{\alpha_I}(r)}{\widetilde{Vol}_{\alpha_I}(r)}\leq C_7, \ \ \frac{\alpha_1\alpha_2}{\alpha_3}\leq r\leq \alpha_1.\end{equation}

 \noindent Similarly, for the interval $\alpha_1\leq r\leq \alpha_2$, Propositions \ref{PostSubRiemannianLowerProp}, \ref{postsubRiemannianupperprop} and the definition of $\widetilde{Vol}_{\alpha_I}(r)$ yield

\begin{equation}\label{Kioski2}  C_5\leq \frac{Vol_{\alpha_I}(r)}{\widetilde{Vol}_{\alpha_I}(r)}\leq C_6, \ \ \alpha_1\leq r \leq \alpha_2.\end{equation}

\noindent Finally, for $r\geq\alpha_2$, the monotonicity of $Vol_{\alpha_I}(r)$, along with Proposition \ref{PostSubRiemannianLowerProp}, yield

\[Vol_{\alpha_I}(r)\geq Vol_{\alpha_I}(\alpha_2)\geq C_5\alpha_1^{d_1}\alpha_2^{d_2}\alpha_3^{d_3}. \]

\noindent We also have the trivial upper bound
\[ Vol_{\alpha_I}(r)\leq \alpha_1^{d_1}\alpha_2^{d_2}\alpha_3^{d_3}\mu_0(G/K). \]

\noindent Therefore, the definition of $\widetilde{Vol}_{\alpha_I}(r)$ yields 

\begin{equation}\label{Growth4}  C_5\leq \frac{Vol_{\alpha_I}(r)}{\widetilde{Vol}_{\alpha_I}(r)}
\leq \mu_0(G/K), \ \ r\geq \alpha_2.  \end{equation}

\noindent Now taking 

\begin{eqnarray*} b_1&:=&\min\{C_1, C_3, C_5\},\\  \\ 
b_2&:=&\max\{C_2, C_7, C_6, \mu_0(G/K)\},\end{eqnarray*} 

\noindent along with relations \eqref{Growth1} - \eqref{Growth4}, we obtain 

\[ b_1\leq \frac{Vol_{\alpha_I}(r)}{\widetilde{Vol}_{\alpha_I}(r)}\leq b_2. \]

\noindent Observe that the constants $b_1,b_2$ depend only on the space $G/K$, under Assumption \ref{Assumption}, and on the ordering of the spaces $\fr{m}_i$, $i=1,2,3$. But there are $6$ such possible orderings, so taking the minimum of $b_1$ and the maximum of $b_2$ over all $6$ orderings, we obtain the desired result. \qed  \\

 We are ready to prove Theorem \ref{maintheorem}. \\

 \noindent \emph{Proof of Theorem \ref{maintheorem}.}  Set $d_{max}:=\max\{d_1,d_2,d_3\}$. We will show that 
 
 \begin{equation}\label{TwotoC}\widetilde{Vol}_{\alpha_I}(2r)\leq 2^{n+d_{max}}\widetilde{Vol}_{\alpha_I}(r), \ \ r>0.\end{equation}

\noindent To this end, consider the function

\[ f(r):=\frac{\widetilde{Vol}_{\alpha_I}(r)}{r^{n+d_{max}}}=\begin{cases} r^{-d_{max}},  & 0<r\leq \frac{\alpha_1\alpha_2}{\alpha_3},\\ 
\big(\frac{\alpha_3}{\alpha_1\alpha_2}\big)^{d_3}r^{d_3-d_{max}},  & \frac{\alpha_1\alpha_2}{\alpha_3}<r\leq \alpha_1, \\  
\alpha_1^{d_1}(\frac{\alpha_3}{\alpha_2})^{d_3}r^{-d_1-d_{max}}, &  \alpha_1<r\leq \alpha_2, \\ 
\alpha_1^{d_1}\alpha_2^{d_2}\alpha_3^{d_3}r^{-n-d_{max}}, & r>\alpha_2  \end{cases}. \] 
 
 \noindent Then $f$ is continuous since $\widetilde{Vol}_{\alpha_I}(r)$ is continuous. Besides, $f(r)$ is non-increasing on $(0,+\infty)$, and hence $f(2r)\leq f(r)$, which yields \eqref{TwotoC}. Now from relation \eqref{TwotoC} and Theorem \ref{VolumeGrowth}, we obtain

 \begin{equation*}Vol_{\alpha_I}(2r)\leq b_2\widetilde{Vol}_{\alpha_I}(2r)\leq b_2 2^{n+d_{max}}\widetilde{Vol}_{\alpha_I}(r)\leq 2^{n+d_{max}}\frac{b_2}{b_1}Vol_{\alpha_I}(r). \end{equation*}

 \noindent We conclude that $D_{\alpha_I}\leq 2^{n+d_{max}}\frac{b_2}{b_1}$, for all metrics $\alpha_I$. Since the right hand depends only on the space $G/K$, the proof is concluded. \qed

\section{Applications of Theorem \ref{maintheorem}}\label{Z2WallachApplications}

In this section, we discuss the classes of $\mathbb Z_2\times \mathbb Z_2$-symmetric spaces and generalized Wallach spaces. We also prove Theorem \ref{TheoremWallach}. 

Let $G/K$ be a homogeneous space where $G$ is a compact connected Lie group, acting almost effectively on the space. Following \cite{Kol09}, the space $G/K$ is called a $Z_2\times \mathbb Z_2$-symmetric space if there exists an injective homomorphism $\rho:\mathbb Z_2\times \mathbb Z_2\rightarrow \op{Aut}(G)$, such that 
 
 \[ G^{\rho}_0\subseteq K \subseteq G^{\rho},\]

 \noindent where $G^{\rho}$ denotes the subgroup of fixed points of $\rho(Z_2\times \mathbb Z_2)$, and $G^{\rho}_0$ denotes its connected component at $e$. Equivalently, there exist two different commuting involutions $\sigma,\tau\in \op{Aut}(G)\setminus \{\op{id}\}$, such that 

 \[ (G^{\sigma}\cap G^{\tau})_0\subseteq K\subseteq 
G^{\sigma}\cap G^{\tau}.\]

\noindent Let $\fr{g}=\fr{g}^{\sigma}\oplus\fr{p}^{\sigma}=\fr{g}^{\tau}\oplus\fr{p}^{\tau}$ be the corresponding decompositions of $\fr{g}$ into $+1$ and $-1$ eigenspaces of $\sigma, \tau$ respectively. Consider the spaces 

 \[ \fr{k}=\op{Lie}(K)=\fr{g}^{\sigma}\cap \fr{g}^{\tau}, \ \fr{m}_1:=\fr{g}^{\sigma}\cap \fr{p}^{\tau}, \ \fr{m}_2:=\fr{g}^{\tau}\cap \fr{p}^{\sigma}, \ \fr{m}_3:=\fr{p}^{\sigma}\cap \fr{p}^{\tau}. \]

\noindent The spaces $\fr{m}_1,\fr{m}_2,\fr{m}_3$ are $\op{Ad}_K$-invariant. Moreover, by \cite{Kol09}, Proposition 2.2, we have the decomposition

\begin{equation}\label{KollrossDecomposition}\fr{g}=\fr{k}\oplus \fr{m}_1\oplus \fr{m}_2\oplus \fr{m}_3,\end{equation}

\noindent and the corresponding $\mathbb Z_2\times\mathbb Z_2$-grading yields

\begin{equation}\label{KolrosBracket}[\fr{m}_i,\fr{m}_i]\subseteq \fr{k}, \ \ i=1,2,3, \ \ \makebox{and} \ \ [\fr{m}_i,\fr{m}_j]\subseteq \fr{m}_k, \ \ \makebox{$i,j,k$ pairwise distinct}. \end{equation}

\noindent  The following lemma ensures that the inclusions $[\fr{m}_i,\fr{m}_j]\subseteq \fr{m}_k$ are in fact equalities for $\fr{g}$ simple.

\begin{lemma}\label{Z2Satisfy}Let $G/K$ be a $\mathbb Z_2\times \mathbb Z_2$-symmetric space. If $\fr{g}$ is simple, then
 
\begin{equation*}[\fr{m}_i,\fr{m}_j]=\fr{m}_k, \ \  \makebox{for all} \ \ i,j,k \ \ \makebox{pairwise distinct}. \end{equation*}

\end{lemma}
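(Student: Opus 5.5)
We use part 3 of Lemma \ref{LemmaRelaxed} as a template, but the modules $\fr{m}_i$ need not be irreducible here, so we argue with ideals. By \eqref{KolrosBracket} the inclusions $[\fr{m}_i,\fr{m}_j]\subseteq\fr{m}_k$ hold for pairwise distinct $i,j,k$, and also $[\fr{m}_i,\fr{m}_i]\subseteq\fr{k}$. The goal is to show that the $Q$-orthogonal complement of $[\fr{m}_i,\fr{m}_j]$ inside $\fr{m}_k$ is zero. Fix distinct $i,j,k$ and set
\[
W:=\{Z\in\fr{m}_k:\ Q(Z,[\fr{m}_i,\fr{m}_j])=0\}.
\]
By $\op{Ad}$-invariance of $Q$, $Z\in W$ if and only if $Q([Z,\fr{m}_i],\fr{m}_j)=0$. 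Since $[Z,\fr{m}_i]\subseteq\fr{m}_j$, this means $[Z,\fr{m}_i]=0$. In the same way, $Q([\fr{m}_j,Z],\fr{m}_i)=Q(\fr{m}_j,[Z,\fr{m}_i])=0$, and since $[\fr{m}_j,Z]\subseteq\fr{m}_i$, we also get $[Z,\fr{m}_j]=0$. So every $Z\in W$ commutes with $\fr{m}_i\oplus\fr{m}_j$.

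Next we produce an ideal. The grading gives $[\fr{k},\fr{m}_k]\subseteq\fr{m}_k$, and the Jacobi identity shows that $W$ is $\op{ad}_{\fr{k}}$-invariant: for $X\in\fr{k}$, $Z\in W$ and $A\in\fr{m}_i$,
\[
[[X,Z],A]=[X,[Z,A]]-[Z,[X,A]]=0,
\]
because $[X,A]\in\fr{m}_i$. The same computation works for $A\in\fr{m}_j$. Now let $\fr{a}:=W+[\fr{m}_k,W]$, where $[\fr{m}_k,W]\subseteq\fr{k}$, and let $\fr{a}'$ be the ideal generated by $W$. The aim is to show $\fr{a}'$ is a proper ideal. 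The cleanest route is to check directly that $\fr{a}'$ is $Q$-orthogonal to $\fr{m}_i\oplus\fr{m}_j$.

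For a first attempt at this, take $Z\in W$, $A\in\fr{m}_i$ and $B\in\fr{m}_j$. Then $Q(Z,[A,B])=Q([Z,A],B)=0$, so $W\perp[\fr{m}_i,\fr{m}_j]$ (which is how $W$ was defined). To handle brackets of arbitrary length, I would show that the subspace $\fr{b}:=\{Y\in\fr{g}: [Y,\fr{m}_i]=[Y,\fr{m}_j]=0\}$, the centralizer of $\fr{m}_i\oplus\fr{m}_j$, is preserved by the subalgebra generated by $\fr{m}_i\oplus\fr{m}_j$. Under the grading, $\fr{m}_i+\fr{m}_j+[\fr{m}_i,\fr{m}_j]+[\fr{m}_i,\fr{m}_i]+[\fr{m}_j,\fr{m}_j]$ is an ideal-like candidate. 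The simpler standard argument goes as follows. Let $\fr{s}$ be the subalgebra generated by $\fr{m}_i\oplus\fr{m}_j$. Its centralizer $\fr{c}=\fr{z}_{\fr{g}}(\fr{s})$ is graded and $\op{ad}_{\fr{k}}$-stable, because $[\fr{k},\fr{s}]\subseteq\fr{s}$. It is also $\op{ad}_{\fr{s}}$-stable: if $Y\in\fr{c}$ and $S\in\fr{s}$, then $[S,Y]=0$. For $\fr{m}_k$, write $\fr{m}_k=[\fr{m}_i,\fr{m}_j]\oplus W$. Then $[\fr{m}_k,\fr{c}]$ is handled using the decomposition $\fr{g}=\fr{s}+\fr{m}_k+\fr{k}$, together with the observation that $\fr{s}\supseteq\fr{m}_i+\fr{m}_j+[\fr{m}_i,\fr{m}_j]$.

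\textbf{Cleaner final plan.} Show that $\fr{s}$ is in fact an ideal of $\fr{g}$. Its graded pieces are
\[
\fr{s}\cap\fr{m}_i=\fr{m}_i,\quad \fr{s}\cap\fr{m}_j=\fr{m}_j,\quad \fr{s}\cap\fr{m}_k=[\fr{m}_i,\fr{m}_j],\quad \fr{s}\cap\fr{k}=[\fr{m}_i,\fr{m}_i]+[\fr{m}_j,\fr{m}_j]+[[\fr{m}_i,\fr{m}_j],[\fr{m}_i,\fr{m}_j]].
\]
One checks $[\fr{g},\fr{s}]\subseteq\fr{s}$ piece by piece using Jacobi identities and the fact $[W,\fr{m}_i]=[W,\fr{m}_j]=0$. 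The main obstacle is the $\fr{k}$-component, namely checking $[\fr{m}_k,\fr{s}\cap\fr{k}]\subseteq\fr{s}$. I would attack this with Jacobi; for instance $[W,[A,A']]=0$ for $A,A'\in\fr{m}_i$. Given the ideal, simplicity forces $\fr{s}=0$ or $\fr{s}=\fr{g}$. The case $\fr{s}=0$ is impossible as long as $\fr{m}_i\neq0$ (the $\mathbb Z_2\times\mathbb Z_2$ structure and effectivity should guarantee each $\fr{m}_l\neq 0$; I would verify this separately). Hence $\fr{s}=\fr{g}$, so $\fr{s}\cap\fr{m}_k=\fr{m}_k$, that is, $[\fr{m}_i,\fr{m}_j]=\fr{m}_k$.
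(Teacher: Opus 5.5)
Your final plan is essentially the paper's proof and it goes through: your generated subalgebra $\fr{s}$ is exactly the paper's space $(\fr{m}_i\oplus\fr{m}_j)+[\fr{m}_i\oplus\fr{m}_j,\fr{m}_i\oplus\fr{m}_j]$, which the paper shows is a nonzero ideal of the simple $\fr{g}$ before reading off its $\fr{m}_k$-component; as written, though, you leave the actual checks open. Each is a one-liner, and none needs $W$:
\begin{itemize}
\item For $Y\in\fr{k}\oplus\fr{m}_k$, $\op{ad}_Y$ maps $\fr{m}_i\oplus\fr{m}_j$ into itself, so as a derivation it preserves the subalgebra that space generates; this settles the step you call the main obstacle.
\item A left-normed bracket of elements of $\fr{m}_i\cup\fr{m}_j$ can land in $\fr{m}_k$ only through a final bracket between $\fr{m}_i$ and $\fr{m}_j$, which gives your asserted equality $\fr{s}\cap\fr{m}_k=[\fr{m}_i,\fr{m}_j]$.
\item $\fr{s}\neq 0$ needs only $\fr{m}_i\oplus\fr{m}_j\neq 0$, and this follows from $\sigma\neq\tau$ and $\sigma,\tau\neq\op{id}$ (injectivity of $\rho$), not from effectivity.
\end{itemize}
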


\begin{proof} For any $i,j=1,2,3$, with $i\neq j$, consider the space $\fr{s}:=\fr{m}_i\oplus \fr{m}_j$. We will show that the space $\fr{s}+[\fr{s},\fr{s}]$ is an ideal of $\fr{g}$. Indeed, by the $\op{Ad}_K$-invariance of the submodules $\fr{m}_l$, we have 
\[
[\fr{k},\fr{s}]\subseteq \fr{s},
\]
\noindent while the Jacobi identity yields $[\fr{k},[\fr{s},\fr{s}]]\subseteq[\fr{s},\fr{s}]$. Moreover, relations \eqref{KolrosBracket} give
\[
[\fr{s},\fr{s}+[\fr{s},\fr{s}]]\subseteq \fr{s}+[\fr{s},\fr{s}] \ \ \makebox{and} \ \
[\fr{m}_k,\fr{s}]\subseteq\fr{s},
\]
\noindent where $k\neq i,j$. Hence, by the Jacobi identity we obtain
\[
[\fr{m}_k,[\fr{s},\fr{s}]]\subseteq[\fr{s},\fr{s}].
\]
\noindent Given that $\fr{g}=\fr{k}\oplus \fr{m}_k\oplus \fr{s}$, the above relations show that $\fr{s}+[\fr{s},\fr{s}]$ is an ideal of $\fr{g}$. Notice that $\fr{s}\neq\{0\}$ for otherwise, either $\sigma=\tau$ or at least one of the involutions $\sigma,\tau$ is the identity, contradicting the injectivity of $\rho$. Since $\fr{g}$ is simple and $\fr{s}\neq \{0\}$, we deduce that 
 
 \[\fr{g}= \fr{s}+[\fr{s},\fr{s}]
=\fr{m}_i\oplus \fr{m}_j+[\fr{m}_i,\fr{m}_i]+[\fr{m}_j,\fr{m}_j]+[\fr{m}_i,\fr{m}_j]\subseteq
\fr{k}\oplus \fr{m}_i\oplus \fr{m}_j\oplus[\fr{m}_i,\fr{m}_j].\]

\noindent Along with relations \eqref{KolrosBracket} and decomposition \eqref{KollrossDecomposition}, the above relation yields $[\fr{m}_i,\fr{m}_j]=\fr{m}_k$, for $k\neq i,j$.\end{proof}

 A generalized Wallach space is defined as an almost effective homogeneous space $G/K$ with $G$ compact connected and semisimple, such that the isotropy representation $\left.\op{Ad}_K\right|_{\fr{m}}:K\rightarrow \op{Gl}(\fr{m})$ decomposes into three pairwise orthogonal, irreducible summands $\fr{m}=\fr{m}_1\oplus \fr{m}_2\oplus\fr{m}_3$, with respect to the Killing form $B$ of $\fr{g}$, satisfying

\begin{equation*}[\fr{m}_i,\fr{m}_i]\subseteq \fr{k}, \ \ i=1,2,3. \end{equation*}

\noindent By the $\op{Ad}$-invariance of the Killing form and the $B$-orthogonality of the above decomposition, the above relations imply

\begin{equation*} [\fr{m}_i,\fr{m}_j]\subseteq \fr{m}_k, \ \ \makebox{for all $i,j,k$ pairwise distinct}.\end{equation*}

\noindent Since the submodules $\fr{m}_i$ are $\op{Ad}_K$-irreducible, part 3. of Lemma \ref{LemmaRelaxed} yields the following.

\begin{lemma}\label{LemmaRelaxed1} Let $G/K$ be a generalized Wallach space.  Then either 

\begin{eqnarray*}[\fr{m}_i,\fr{m}_j]&=&\{0\}, \ \ \makebox{for all $i\neq j$,  or} \\ \\  
{}[\fr{m}_i,\fr{m}_j]&=&\fr{m}_k, \ \ \makebox{for all $i,j,k$ pairwise distinct}. \end{eqnarray*}
\end{lemma}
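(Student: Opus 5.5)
The plan is to derive Lemma \ref{LemmaRelaxed1} as a direct consequence of part 3 of Lemma \ref{LemmaRelaxed}. The only work is checking that a generalized Wallach space meets the hypotheses of that lemma, with a suitable choice of $\op{Ad}$-invariant inner product $Q$. Since $G$ is compact and semisimple, the Killing form $B$ is negative definite, so I take $Q:=-B$. This is an $\op{Ad}$-invariant inner product on $\fr{g}$.

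Next I set $\fr{k}=\op{Lie}(K)$ and let $\fr{m}$ be its $Q$-orthogonal complement, so that $\fr{g}=\fr{k}\oplus\fr{m}$ is the $Q$-orthogonal reductive decomposition. By the definition of a generalized Wallach space, $\fr{m}=\fr{m}_1\oplus\fr{m}_2\oplus\fr{m}_3$ is $B$-orthogonal, hence $Q$-orthogonal. The summands $\fr{m}_i$ are $\op{Ad}_K$-invariant and irreducible. Thus $\fr{g}=\fr{k}\oplus\fr{m}_1\oplus\fr{m}_2\oplus\fr{m}_3$ is an orthogonal reductive decomposition of the form required in Lemma \ref{LemmaRelaxed}.

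The remaining hypothesis is the bracket relation \eqref{RELCOND}. I verify it as in the text preceding the statement. Take $i,j,k$ pairwise distinct and $X\in\fr{m}_i$, $Y\in\fr{m}_j$.

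First, $[X,Y]$ is orthogonal to $\fr{k}$. For $W\in\fr{k}$ we have $Q([X,Y],W)=Q(X,[Y,W])=0$, because $[\fr{k},\fr{m}_j]\subseteq\fr{m}_j$ by $\op{Ad}_K$-invariance and $\fr{m}_j\perp\fr{m}_i$.

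Second, $[X,Y]$ is orthogonal to $\fr{m}_i$. For $W\in\fr{m}_i$ we have $Q([X,Y],W)=-Q(Y,[X,W])$, and $[X,W]\in[\fr{m}_i,\fr{m}_i]\subseteq\fr{k}$, which is orthogonal to $\fr{m}_j$. The same argument with the roles of $i$ and $j$ exchanged shows $[X,Y]\perp\fr{m}_j$.

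Hence $[X,Y]\in\fr{m}_k$, which is \eqref{RELCOND}. Part 3 of Lemma \ref{LemmaRelaxed}, applied with the irreducibility of the $\fr{m}_i$, then gives exactly the dichotomy in the statement.

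There is no real obstacle. The only points needing care are that $Q=-B$ is positive definite, which uses semisimplicity and compactness, and the small orthogonality check above.
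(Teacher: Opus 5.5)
Your proposal is correct and follows the paper's route: the paper also works with the Killing form, derives $[\fr{m}_i,\fr{m}_j]\subseteq\fr{m}_k$ from its $\op{Ad}$-invariance, the $B$-orthogonality of the decomposition and $[\fr{m}_i,\fr{m}_i]\subseteq\fr{k}$, and then applies part 3 of Lemma \ref{LemmaRelaxed} using irreducibility. You spell out the orthogonality checks that the paper leaves implicit, including the one against $\fr{k}$, which uses $\op{Ad}_K$-invariance.
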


\begin{remark}\label{ProductLocal}By the results in \cite{Nik16}, the first case in Lemma \ref{LemmaRelaxed1} holds true if and only if $G/K$ is locally the product of three compact irreducible symmetric spaces.\end{remark}

Generalized Wallach spaces include the standard Wallach spaces $SU(3)/T_{max}$, $Sp(3)/Sp(1)^3$, $F_4/Spin(8)$ and the Lie group $SU(2)$, among several other homogeneous classes. They have been independently classified in \cite{Nik16} (correction in \cite{Nik21}) and in \cite{CheKaLi16}.  We remark that any generalized Wallach space $G/K$ with $G$ simple is a $\mathbb Z_2\times \mathbb Z_2$-symmetric space (\cite{Nik16}).  From the aforementioned classifications, we deduce that the generalized Wallach spaces listed in Table \ref{table} have non-equivalent submodules among $\fr{m}_i$.  We proceed to prove Theorem \ref{TheoremWallach}.\\

\noindent \emph{Proof of Theorem \ref{TheoremWallach}.} 
 Suppose that $G/K$ is a $\mathbb Z_2\times \mathbb Z_2$-symmetric space with $G$ compact and simple. Let $Q$ be the negative of the Killing form of $\fr{g}$. Then $\fr{g}$ admits the $Q$-orthogonal decomposition \eqref{KollrossDecomposition}, satisfying relation \eqref{KolrosBracket}. By Lemma \ref{Z2Satisfy}, relations \eqref{cyclicbracket} are satisfied, and hence $G/K$ satisfies Assumption \ref{Assumption}.  By Theorem \ref{maintheorem}, $G/K$ is uniformly doubling on the set of metrics $\alpha_I$ of the form \eqref{MetricParameterForm}. If $G/K$ is a generalized Wallach space that is not locally a product of three irreducible compact symmetric spaces, Remark \ref{ProductLocal}, along with Lemma \ref{LemmaRelaxed1}, imply that $G/K$ satisfies Assumption \ref{Assumption}. By Theorem \ref{maintheorem}, $G/K$ is uniformly doubling on the set of metrics $\alpha_I$. On the other hand, if $G/K$ is locally the product of three symmetric spaces, Remark \ref{ProductLocal} implies that $[\fr{m}_i,\fr{m}_j]=\{0\}$, for all $i\neq j$. By Remark \ref{UniformDoublingProduct}, the space is uniformly doubling with $D_{\alpha_I}\leq 2^{n}$ for all $G$-invariant metrics $\alpha_I$. This settles the first part of the theorem.

Now let $G/K$ be one of the generalized Wallach spaces in Table \ref{table} and thus the submodules $\fr{m}_i$ are inequivalent. Then Schur's lemma implies that the metrics $\alpha_I$ exhaust the $G$-invariant metrics on $G/K$. Therefore, $G/K$ is uniformly doubling on the complete set of $G$-invariant metrics.  If $G/K=SU(2)$, although the submodules $\fr{m}_i=<e_i>$ are equivalent, any left-invariant metric can be diagonalized with respect to a Milnor basis (see e.g. \cite{EGS18}), and hence has the form \eqref{MetricParameterForm}. The same is true for the Stiefel manifold $V_2\mathbb R^n=SO(n)/SO(n-2)$, $n\geq 3$, $n\neq 4$. Indeed, if $n=3$, then $V_2\mathbb R^3=SO(3)$, and the diagonalization follows from the analogous result for $SU(2)$ via the double covering $SU(2)\rightarrow SO(3)$. If $n\geq 5$, athough two of the submodules $\fr{m}_{i}$ are equivalent, any metric can be diagonalized as \eqref{MetricParameterForm} via the adjoint action of $SO(2)$ (\cite{Kerr} p. 120, 121). Therefore, the metrics $\alpha_I$ exhaust the $G$-invariant metrics in those spaces up to isometry. We conclude that $SU(2)$ and $SO(n)/SO(n-2)$, $n\geq 3$, $n\neq 4$, are uniformly doubling on the complete set of $G$-invariant metrics.\qed

\section{Proof of Theorem \ref{PoincareTheorem}}\label{PoincareProofSection}

In this section, we prove Theorem \ref{PoincareTheorem}, obtaining the global Poincar\'{e} inequality \eqref{PoincareInequality} for compact homogeneous spaces $G/K$. Corollary \ref{PoincareCorol} follows immediately from theorems \ref{PoincareTheorem} and \ref{TheoremWallach}, by taking $D$ to be the corresponding uniform doubling constant and observing that the isotropy groups $K$ in the spaces $G/K$ of Table \ref{table} are connected.\\

\noindent \emph{Proof of Theorem \ref{PoincareTheorem}.} By homogeneity, it suffices to prove the inequality for balls centered at $o$. Fix an $\op{Ad}$-invariant inner product $Q$ on $\fr{g}$ and consider the $Q$-orthogonal reductive decomposition
   
   \[ \fr{g}=\fr{k}\oplus \fr{m}. \]
\noindent The $G$-invariant metric $g$ corresponds to an $\op{Ad}_K$-invariant inner product on $\fr{m}$, which we denote again by $g$. For $\epsilon>0$, we consider the left-invariant metric $\bar{g}_{\epsilon}$ on $G$, induced by the inner product    
   
\[ \bar{g}_{\epsilon}=\left.g\right|_{\fr{m}\times \fr{m}}+\left.\epsilon^2Q\right|_{\fr{k}\times \fr{k}}. \]

\noindent Observe that $\bar{g}_{\epsilon}$ is $\op{Ad}_K$-invariant, and thus the projection $\pi:(G,\bar{g}_{\epsilon})\rightarrow (G/K,g)$ is a Riemannian submersion. Every fiber is isometric to $(K,\left.\epsilon^2Q\right|_{\fr{k}\times \fr{k}})$, and hence all fibers have the same Riemannian volume. Set

\[ v_{\epsilon}:=\mu_{\pi^{-1}(p)}(\pi^{-1}(p)), \ \ p\in G/K, \]

\noindent where $\mu_{\pi^{-1}(p)}$ denotes the Riemannian volume measure induced by $\bar{g}_{\epsilon}$ on the fiber $\pi^{-1}(p)$. Corollary \ref{RiemannianSubVol} yields

\begin{equation}\label{above} \mu_{\bar{g}_{\epsilon}}(\pi^{-1}(S))=v_{\epsilon}\mu_{g}(S), \end{equation}

\noindent for every measurable set $S\subseteq G/K$.
Now let $f\in \mathcal{C}^{\infty}(G/K)$ and set $F:=f\circ \pi\in \mathcal{C}^{\infty}(G)$. By the proof of Poincar\'{e} inequality for unimodular Lie groups in \cite{SC02}, p.172, 173, we obtain

\begin{equation}\label{PoincareLieGroupsProof} \int_{B_{\bar{g}_{\epsilon}}(e,r)}{|F-F_{B_{\bar{g}_{\epsilon}}(e,r)}|^2d\mu_{\bar{g}_{\epsilon}}}\leq 2r^2\frac{\mu_{\bar{g}_{\epsilon}}(B_{\bar{g}_{\epsilon}}(e,2r))}{\mu_{\bar{g}_{\epsilon}}(B_{\bar{g}_{\epsilon}}(e,r))}\int_{B_{\bar{g}_{\epsilon}}(e,2r)}{\|\nabla_{\bar{g}_{\epsilon}}{F}\|_{\bar{g}_{\epsilon}}^2d\mu_{\bar{g}_{\epsilon}}}, \end{equation}
  
\noindent where

\[ F_{B_{\bar{g}_{\epsilon}}(e,r)}=\frac{1}{\mu_{\bar{g}_{\epsilon}}(B_{\bar{g}_{\epsilon}}(e,r))}\int_{B_{\bar{g}_{\epsilon}}(e,r)}{Fd\mu_{\bar{g}_{\epsilon}}}.\]

\noindent Since $G$ is compact, it is unimodular and thus the above inequality holds for $G$.  To prove the corresponding inequality for $G/K$, we will first prove the following three claims.
 
 \begin{claim}\label{ClaimPoincare1} \begin{equation*}\lim_{\epsilon \rightarrow 0}{\frac{\mu_{\bar{g}_{\epsilon}}(B_{\bar{g}_{\epsilon}}(e,2r))}{\mu_{\bar{g}_{\epsilon}}(B_{\bar{g}_{\epsilon}}(e,r))}}=\frac{\mu_g(B_g(o,2r))}{\mu_g(B_g(o,r))}\leq D_g.\end{equation*} \end{claim}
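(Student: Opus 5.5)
The plan is to compare the balls in $(G,\bar g_\epsilon)$ with preimages of balls in $(G/K,g)$, and then use the fibre-volume formula \eqref{above}, which makes the factor $v_\epsilon$ cancel in the ratio.

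Since $\pi:(G,\bar g_\epsilon)\to(G/K,g)$ is a Riemannian submersion, it does not increase lengths. Hence $\pi(B_{\bar g_\epsilon}(e,\rho))\subseteq B_g(o,\rho)$, that is,
\[ B_{\bar g_\epsilon}(e,\rho)\subseteq \pi^{-1}(B_g(o,\rho)). \]
In the other direction, take $x\in\pi^{-1}(B_g(o,\rho))$. Let $c$ be a $g$-minimizing curve from $o$ to $\pi(x)$ of length $<\rho$, and take its horizontal lift from $e$, which has the same length. This lift ends at some $xk$ with $k\in K$. Since $K$ is connected and compact, we can join $xk$ to $x$ inside the fibre $xK$. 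The fibre is isometric to $(K,\epsilon^2Q|_{\fr k})$, so this vertical segment has length at most $\epsilon\,\op{diam}_Q(K)$. It follows that
\[ \pi^{-1}(B_g(o,\rho))\subseteq B_{\bar g_\epsilon}(e,\rho+\epsilon c_K), \qquad c_K:=\op{diam}_Q(K). \]

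Combining the two inclusions with \eqref{above} gives
\[ v_\epsilon\,\mu_g(B_g(o,\rho-\epsilon c_K))\le \mu_{\bar g_\epsilon}(B_{\bar g_\epsilon}(e,\rho))\le v_\epsilon\,\mu_g(B_g(o,\rho)). \]
We apply this with $\rho=r$ and $\rho=2r$ and divide; the factor $v_\epsilon$ cancels. As $\epsilon\to0$, continuity from below of the measure gives $\mu_g(B_g(o,\rho-\epsilon c_K))\to\mu_g(B_g(o,\rho))$, because the open ball is the increasing union of the smaller open balls. The squeeze then yields the stated limit, and the final inequality $\le D_g$ is just the definition of $D_g$.

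The main obstacle is the second inclusion. It needs the horizontal-lift property, which holds because the horizontal distribution is $G$-invariant and $\pi$ is a submersion with compact fibres. It also needs connectedness of $K$, so that the fibre is connected with diameter scaling like $\epsilon$. This is exactly where the hypothesis that $K$ is connected is used.
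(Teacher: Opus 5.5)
Your proposal is correct and follows essentially the same route as the paper. You sandwich $B_{\bar g_\epsilon}(e,\rho)$ between $\pi^{-1}(B_g(o,\rho-\epsilon\,\op{diam}_Q(K)))$ and $\pi^{-1}(B_g(o,\rho))$, using the $1$-Lipschitz property of $\pi$ and a horizontal lift corrected inside the connected fibre; then you cancel $v_\epsilon$ via \eqref{above} and conclude by continuity from below of $\mu_g$. The only cosmetic differences are that you state the lift inclusion as $\pi^{-1}(B_g(o,\rho))\subseteq B_{\bar g_\epsilon}(e,\rho+\epsilon c_K)$, and that you justify global horizontal lifts by compactness of the fibres where the paper uses completeness of $(G,\bar g_\epsilon)$.
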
   
 
 \begin{claim}\label{ClaimPoincare2} \begin{equation*} \lim_{\epsilon\rightarrow 0}{\frac{1}{v_{\epsilon}}\int_{B_{\bar{g}_{\epsilon}}(e,2r)}{\|\nabla_{\bar{g}_{\epsilon}}{F}\|_{\bar{g}_{\epsilon}}^2d\mu_{\bar{g}_{\epsilon}}}}=\int_{B_g(o,2r)}{\|\nabla_gf\|^2_gd\mu_g}.\end{equation*}\end{claim}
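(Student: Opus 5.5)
The plan is to reduce the integral over the group ball to an integral over a saturated set $\pi^{-1}(S)$, and then squeeze the group ball between two such saturated sets whose images converge to $B_g(o,2r)$. The first step is pointwise. Since $F=f\circ\pi$ is constant along the fibers of the Riemannian submersion $\pi:(G,\bar g_\epsilon)\to(G/K,g)$, its gradient $\nabla_{\bar g_\epsilon}F$ annihilates vertical vectors and is therefore horizontal. Because $d\pi$ is an isometry on horizontal spaces, it follows that $d\pi(\nabla_{\bar g_\epsilon}F)=\nabla_g f$ and
\[ \|\nabla_{\bar g_\epsilon}F\|_{\bar g_\epsilon}=\|\nabla_g f\|_g\circ\pi, \]
for every $\epsilon>0$. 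For any measurable $S\subseteq G/K$, Lemma \ref{FubIN} applied to $\phi:=\mathbf 1_{\pi^{-1}(S)}\,\|\nabla_g f\|_g^2\circ\pi$ then gives
\[ \frac{1}{v_\epsilon}\int_{\pi^{-1}(S)}\|\nabla_{\bar g_\epsilon}F\|^2_{\bar g_\epsilon}\,d\mu_{\bar g_\epsilon}=\int_S\|\nabla_g f\|_g^2\,d\mu_g. \]
Here $\phi$ is constant on each fiber, and all fibers have volume $v_\epsilon$.

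The second step, which I expect to be the main obstacle, is the sandwich
\[ \pi^{-1}\big(B_g(o,2r-c\epsilon)\big)\subseteq B_{\bar g_\epsilon}(e,2r)\subseteq \pi^{-1}\big(B_g(o,2r)\big), \]
where $c$ is the diameter of $K$ with respect to the left-invariant metric induced by $Q|_{\fr k\times\fr k}$. The constant $c$ is finite because $K$ is compact and connected; this is where the connectedness hypothesis of Theorem \ref{PoincareTheorem} enters.

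The right-hand inclusion holds because a Riemannian submersion does not increase lengths of curves, hence does not increase distances. For the left-hand inclusion, take $x$ with $d_g(o,\pi(x))<2r-c\epsilon$ and choose a curve from $o$ to $\pi(x)$ of length less than $2r-c\epsilon$. Its horizontal lift starting at $e$ has the same length and ends at a point $y$ in the fiber $xK$. The fiber $yK=L_y(K)$ is isometric, via $L_y$, to $(K,\epsilon^2Q|_{\fr k\times\fr k})$, since $\bar g_\epsilon$ restricts to $\epsilon^2Q$ on $\fr k$. Connectedness gives a curve inside $yK$ from $y$ to $x$ of length at most $c\epsilon$. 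Hence $d_{\bar g_\epsilon}(e,x)<2r$.

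Finally, I combine the two steps. The integrand is nonnegative, so integrating over the three nested sets and using the Fubini identity from the first step yields
\[ \int_{B_g(o,2r-c\epsilon)}\|\nabla_g f\|_g^2\,d\mu_g\le\frac{1}{v_\epsilon}\int_{B_{\bar g_\epsilon}(e,2r)}\|\nabla_{\bar g_\epsilon}F\|^2_{\bar g_\epsilon}\,d\mu_{\bar g_\epsilon}\le\int_{B_g(o,2r)}\|\nabla_g f\|_g^2\,d\mu_g. \]
As $\epsilon\to0$, the balls $B_g(o,2r-c\epsilon)$ increase to the open ball $B_g(o,2r)$. Monotone convergence then shows that the left side tends to the right side, which proves Claim \ref{ClaimPoincare2}.
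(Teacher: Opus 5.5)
Your proposal is correct and follows essentially the same route as the paper. Both arguments use the horizontality of $\nabla_{\bar g_\epsilon}F$ together with the Fubini identity on saturated sets. Both then apply the sandwich $\pi^{-1}(B_g(o,2r-\lambda_\epsilon))\subseteq B_{\bar g_\epsilon}(e,2r)\subseteq \pi^{-1}(B_g(o,2r))$ with $\lambda_\epsilon=\epsilon\,\op{diam}(K,Q|_{\fr{k}\times\fr{k}})$, and finish with a squeeze.

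The only difference is in the last step. You use monotone convergence, whereas the paper bounds $\|\nabla_g f\|_g^2$ by a constant and uses continuity from below of $\mu_g$ to show that the contribution of the annulus $B_g(o,2r)\setminus B_g(o,2r-\lambda_\epsilon)$ vanishes.
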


\begin{claim}\label{ClaimPoincare3} \begin{equation*} \lim_{\epsilon\rightarrow 0}{\frac{1}{v_{\epsilon}}\int_{B_{\bar{g}_{\epsilon}}(e,r)}{|F-F_{B_{\bar{g}_{\epsilon}}(e,r)}|^2d\mu_{\bar{g}_{\epsilon}}}}=\int_{B_g(o,r)}{|f-f_{B_g(o,r)}|^2d\mu_g}.\end{equation*}\end{claim}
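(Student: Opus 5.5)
The approach is to show that the balls $B_{\bar{g}_{\epsilon}}(e,r)$ are squeezed between preimages of $g$-balls whose radii converge to $r$. Since $F=f\circ\pi$ is constant on fibers, every integral of a function of $F$ over these sets can then be pushed down to $G/K$ by relation \eqref{above}. The first step is the sandwich
\[
\pi^{-1}\big(B_g(o,r-c\epsilon)\big)\subseteq B_{\bar{g}_{\epsilon}}(e,r)\subseteq \pi^{-1}\big(B_g(o,r)\big),
\]
where $c:=\op{diam}(K,\left.Q\right|_{\fr{k}\times\fr{k}})<\infty$ and $0<\epsilon<r/c$.

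The right inclusion holds because $\pi:(G,\bar{g}_{\epsilon})\rightarrow(G/K,g)$ is a Riemannian submersion and hence does not increase lengths of curves, so $d_g(o,\pi(x))\leq d_{\bar{g}_{\epsilon}}(e,x)$. For the left inclusion, take $x$ with $d_g(o,\pi(x))<r-c\epsilon$ and a minimizing geodesic in $G/K$ from $o$ to $\pi(x)$. Its horizontal lift from $e$ has the same length and ends at some $y\in\pi^{-1}(\pi(x))=xK$. The fiber $xK$ is isometric to $(K,\epsilon^2Q)$, and here the connectedness of $K$ is used: it makes the fiber connected, with intrinsic diameter $\epsilon c$. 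Hence $d_{\bar{g}_{\epsilon}}(y,x)\leq \epsilon c$, and so $d_{\bar{g}_{\epsilon}}(e,x)<r$.

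Next we integrate over the sandwich. Put $A_\epsilon:=B_{\bar{g}_{\epsilon}}(e,r)\setminus\pi^{-1}(B_g(o,r-c\epsilon))$. Then $A_\epsilon\subseteq \pi^{-1}\big(B_g(o,r)\setminus B_g(o,r-c\epsilon)\big)$, and relation \eqref{above} gives
\[
\frac{\mu_{\bar{g}_{\epsilon}}(A_\epsilon)}{v_\epsilon}\leq \mu_g\big(B_g(o,r)\big)-\mu_g\big(B_g(o,r-c\epsilon)\big)\longrightarrow 0 .
\]
The convergence holds because the open balls $B_g(o,s)$ increase to $B_g(o,r)$ as $s\uparrow r$, so continuity of the measure from below applies. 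For any bounded function $\phi$ on $G/K$, write $\Phi=\phi\circ\pi$. Combining Lemma \ref{FubIN} with the sandwich, $\frac{1}{v_\epsilon}\int_{B_{\bar{g}_{\epsilon}}(e,r)}\Phi\,d\mu_{\bar{g}_{\epsilon}}$ differs from $\int_{B_g(o,r-c\epsilon)}\phi\,d\mu_g$ by at most $\|\phi\|_\infty\,\mu_{\bar{g}_{\epsilon}}(A_\epsilon)/v_\epsilon$. Therefore it converges to $\int_{B_g(o,r)}\phi\,d\mu_g$.

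We apply this with $\phi=1$ and with $\phi=f$; $f$ is bounded since $G/K$ is compact. This gives $\mu_{\bar{g}_{\epsilon}}(B_{\bar{g}_{\epsilon}}(e,r))/v_\epsilon\rightarrow\mu_g(B_g(o,r))>0$ and $\frac{1}{v_\epsilon}\int_{B_{\bar{g}_{\epsilon}}(e,r)}F\,d\mu_{\bar{g}_{\epsilon}}\rightarrow\int_{B_g(o,r)}f\,d\mu_g$. Taking the quotient, $F_{B_{\bar{g}_{\epsilon}}(e,r)}\rightarrow f_{B_g(o,r)}$.

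Finally we expand $|F-a_\epsilon|^2=F^2-2a_\epsilon F+a_\epsilon^2$ with $a_\epsilon:=F_{B_{\bar{g}_{\epsilon}}(e,r)}$. Applying the previous step to $\phi=f^2$, $f$ and $1$, and using that $a_\epsilon$ converges and is bounded by $\|f\|_\infty$, we obtain the limit $\int_{B_g(o,r)}|f-f_{B_g(o,r)}|^2d\mu_g$.

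The main obstacle is the left inclusion of the sandwich. It requires that the fibers shrink uniformly in the intrinsic metric, which is where the connectedness of $K$ enters; it also requires the horizontal-lift argument for submersions. Once the sandwich is in place, everything else is bookkeeping with relation \eqref{above}.
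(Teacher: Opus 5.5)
Your proposal is correct and follows essentially the paper's route. It uses the same sandwich $\pi^{-1}(B_g(o,r-\lambda_{\epsilon}))\subseteq B_{\bar{g}_{\epsilon}}(e,r)\subseteq \pi^{-1}(B_g(o,r))$ (which the paper proves in Claim 1), pushes integrals of fiber-constant functions down via Lemma \ref{FubIN}, and expands the square (the paper's variance identity). The only cosmetic difference is that you compare with the inner preimage and treat $\phi=1,f,f^2$ uniformly, whereas the paper compares with the outer preimage $\pi^{-1}(B_g(o,r))$ and reuses its limit \eqref{LimitVolume} of normalized ball volumes.
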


\noindent If the three claims are proven, then dividing both sides of \eqref{PoincareLieGroupsProof} by $v_{\epsilon}$ and taking the limit $\epsilon\rightarrow 0$, yields the desired result.

For the first claim, we set 

\[ \lambda_{\epsilon}=\op{diam}(K,\left.\epsilon^2Q\right|_{\fr{k}\times \fr{k}})=\epsilon \op{diam}(K,\left.Q\right|_{\fr{k}\times \fr{k}}). \]

\noindent Since $K$ is connected and compact, its diameter is finite and thus $\lim_{\epsilon\rightarrow 0}{\lambda_{\epsilon}}=0$. Let $\epsilon$ be sufficiently small so that $\lambda_{\epsilon}<r$. First, we show that

\begin{equation}\label{ClaimBall}\pi^{-1}(B_g(o,r-\lambda_{\epsilon}))\subseteq B_{\bar{g}_{\epsilon}}(e,r)\subseteq \pi^{-1}(B_g(o,r)).  \end{equation}

\noindent The second inclusion holds since $\pi$ is a Riemannian submersion, and thus for any $x\in B_{\bar{g}_{\epsilon}}(e,r)$, we have $d_g(o,\pi(x))\leq d_{\bar{g}_{\epsilon}}(e,x)< r$. For the first inclusion, choose $x\in \pi^{-1}(B_g(o,r-\lambda_{\epsilon}))$ so that 

\[ d_g(o,\pi(x))<r-\lambda_{\epsilon}.\]

\noindent Now choose a minimizing geodesic $\gamma:[0,1]\rightarrow G/K$ joining $o$ with $\pi(x)$, and consider its unique horizontal lift $\alpha:[0,1]\rightarrow G$ with $\alpha(0)=e$, which exists globally since $(G,\bar{g}_{\epsilon})$ is complete.  Then $\pi(\alpha(1))=\pi(x)$, i.e., both $x$ and $\alpha(1)$ belong on the same fiber $xK$, and hence

\begin{equation}\label{SameFiber}d_{\bar{g}_{\epsilon}}(x,\alpha(1))\leq \op{diam}(xK,\left.\epsilon^2Q\right|_{\fr{k}\times \fr{k}})= \lambda_{\epsilon}.\end{equation}
 
\noindent On the other hand, since the horizontal lift preserves the length of a curve, we have

\begin{equation}\label{SameFiber1} d_{\bar{g}_{\epsilon}}(e,\alpha(1))\leq \op{Length}_{\bar{g}_{\epsilon}}(\alpha)=\op{Length}_g(\gamma)=d_g(o,\pi(x))<r-\lambda_{\epsilon}.    \end{equation}

\noindent Relations \eqref{SameFiber}, \eqref{SameFiber1} and the triangle inequality yield $d_{\bar{g}_{\epsilon}}(e,x)<r$, giving the first inclusion.

 Now applying Equation \eqref{above} to relation \eqref{ClaimBall}, we obtain

\begin{equation}\label{Mang} \mu_g(B_g(o,r-\lambda_{\epsilon}))\leq \frac{\mu_{\bar{g}_{\epsilon}}(B_{\bar{g}_{\epsilon}}(e,r))}{v_{\epsilon}}\leq \mu_g(B_g(o,r)). \end{equation}

\noindent Since $\lambda_{\epsilon} \downarrow 0$ as $\epsilon \downarrow 0$, we have $B_g(o,r-\lambda_{\epsilon})\uparrow B_g(o,r)$. By continuity from below of $\mu_g$, we obtain 

 \begin{equation}\label{LowContinuity} \lim_{\epsilon\rightarrow 0}{\mu_g(B_g(o,r-\lambda_{\epsilon}))}=\mu_g(B_g(o,r)).\end{equation}

\noindent Therefore, inequality \eqref{Mang} yields

\begin{equation}\label{LimitVolume}\lim_{\epsilon \rightarrow 0}{\frac{\mu_{\bar{g}_{\epsilon}}(B_{\bar{g}_{\epsilon}}(e,r))}{v_{\epsilon}}}=\mu_g(B_g(o,r)).   \end{equation}

   \noindent By also applying relation \eqref{LimitVolume} with $2r$ in place of $r$, we obtain

\begin{equation*}\lim_{\epsilon \rightarrow 0}{\frac{\mu_{\bar{g}_{\epsilon}}(B_{\bar{g}_{\epsilon}}(e,2r))}{\mu_{\bar{g}_{\epsilon}}(B_{\bar{g}_{\epsilon}}(e,r))}}=\frac{\mu_g(B_g(o,2r))}{\mu_g(B_g(o,r))}\leq D_g,\end{equation*}

\noindent thus concluding Claim \ref{ClaimPoincare1}.

  For Claim \ref{ClaimPoincare2}, since $F=f\circ \pi$, the gradient $\nabla_{\bar{g}_{\epsilon}}{F}$ is the horizontal lift of the gradient $\nabla_gf$ (see e.g. \cite{BMP12}), and hence

\begin{equation*}  \|(\nabla_{\bar{g}_{\epsilon}}{F})(x)\|^2_{\bar{g}_{\epsilon}}=\|(\nabla_gf)(\pi(x))\|^2_g, \ \ x\in G.     \end{equation*}

\noindent Set $h:=\|(\nabla_gf)\|^2_g:G/K\rightarrow \mathbb R$. Then $h\circ\pi=\|(\nabla_{\bar{g}_{\epsilon}}{F})\|^2_{\bar{g}_{\epsilon}}:G\rightarrow \mathbb R$. For every measurable set $S\subseteq G/K$, applying Lemma \ref{FubIN} to the integrable function $(h\circ\pi)\mathbf{1}_{\pi^{-1}(S)}$ yields

\begin{eqnarray}
\int_{\pi^{-1}(S)}{\|(\nabla_{\bar{g}_{\epsilon}}{F})\|^2_{\bar{g}_{\epsilon}}d\mu_{\bar{g}_{\epsilon}}}&=&\int_S{\bigg(\int_{\pi^{-1}(p)}{(h\circ\pi)(x)d\mu_{\pi^{-1}(p)}}(x)\bigg)d\mu_g(p)} \nonumber \\
&=&v_{\epsilon}\int_S{hd\mu_g}=v_{\epsilon}\int_{S}{\|\nabla_gf\|^2_gd\mu_g},\label{FubiniGradient1}
\end{eqnarray}

\noindent where in the second equality we have used the fact that $h\circ\pi$ is constant on the fiber $\pi^{-1}(p)$ with value $h(p)$. Relation \eqref{ClaimBall}, along with Equation \eqref{FubiniGradient1}, yields 

\begin{equation}\label{SandwitchIntegral}
\int_{B_g(o,r-\lambda_{\epsilon})}{\|\nabla_gf\|^2_gd\mu_g}\leq \frac{1}{v_{\epsilon}}\int_{B_{\bar{g}_{\epsilon}}(e,r)}{\|\nabla_{\bar{g}_{\epsilon}}{F}\|_{\bar{g}_{\epsilon}}^2d\mu_{\bar{g}_{\epsilon}}}\leq \int_{B_g(o,r)}{\|\nabla_gf\|^2_gd\mu_g}. \end{equation}

\noindent Since $G/K$ is compact and $f$ is smooth, the function $\|\nabla_gf\|^2_g$ is bounded above by a positive constant $C$, and thus 

 \begin{eqnarray*} \bigg|\int_{B_g(o,r)}{\|\nabla_gf\|^2_gd\mu_g}-\int_{B_g(o,r-\lambda_{\epsilon})}{\|\nabla_gf\|^2_gd\mu_g}\bigg|&=&\bigg|\int_{B_g(o,r)\setminus B_g(o,r-\lambda_{\epsilon})}{\|\nabla_gf\|^2_gd\mu_g}\bigg|\\ \\  
 &\leq& C \big|\mu_g(B_g(o,r))-\mu_g(B_g(o,r-\lambda_{\epsilon}))\big|. \end{eqnarray*}

\noindent By relation \eqref{LowContinuity}, the right-hand side tends to zero as $\epsilon \rightarrow 0$. Therefore, 

\[ \lim_{\epsilon\rightarrow 0}{\int_{B_g(o,r-\lambda_{\epsilon})}{\|\nabla_gf\|^2_gd\mu_g}}=\int_{B_g(o,r)}{\|\nabla_gf\|^2_gd\mu_g}, \]

\noindent and thus relation \eqref{SandwitchIntegral} yields

\begin{equation*} \lim_{\epsilon\rightarrow 0}{\frac{1}{v_{\epsilon}}\int_{B_{\bar{g}_{\epsilon}}(e,r)}{\|\nabla_{\bar{g}_{\epsilon}}{F}\|_{\bar{g}_{\epsilon}}^2d\mu_{\bar{g}_{\epsilon}}}}=\int_{B_g(o,r)}{\|\nabla_gf\|^2_gd\mu_g}.\end{equation*}

\noindent Applying the above relation with $2r$ in place of $r$, proves Claim \ref{ClaimPoincare2}.

For Claim \ref{ClaimPoincare3}, we will firstly show that 

\begin{equation}\label{FirstLimitMean}\lim_{\epsilon \rightarrow 0}{F_{B_{\bar{g}_{\epsilon}}(e,r)}}=f_{B_g(o,r)}. \end{equation}

\noindent In view of relation \eqref{LimitVolume} and the definition of the mean, it suffices to prove that

 \begin{equation}\label{LimitMean1}\lim_{\epsilon \rightarrow 0}{\frac{1}{v_{\epsilon}}\int_{B_{\bar{g}_{\epsilon}}(e,r)}{Fd\mu_{\bar{g}_{\epsilon}}}=\int_{B_g(o,r)}{fd\mu_g}}. 
 \end{equation}

\noindent Since $G$ is compact and $F$ is smooth, there exists a constant $c$ such that $|F|\leq c$ on $G$. Since $B_{\bar{g}_{\epsilon}}(e,r)\subseteq \pi^{-1}(B_g(o,r))$ (relation \eqref{ClaimBall}), we have

\begin{equation*} \bigg|\frac{1}{v_{\epsilon}}\int_{\pi^{-1}(B_g(o,r))}{Fd\mu_{\bar{g}_{\epsilon}}}-\frac{1}{v_{\epsilon}}\int_{B_{\bar{g}_{\epsilon}}(e,r)}{Fd\mu_{\bar{g}_{\epsilon}}}\bigg|\leq c\bigg|\frac{\mu_{\bar{g}_{\epsilon}}(\pi^{-1}(B_g(o,r)))}{v_{\epsilon}}-\frac{\mu_{\bar{g}_{\epsilon}}(B_{\bar{g}_{\epsilon}}(e,r))}{v_{\epsilon}}\bigg|. \end{equation*}

\noindent Since $F=f\circ \pi$, applying Lemma \ref{FubIN} to the integrable function $F\mathbf{1}_{\pi^{-1}(B_g(o,r))}$ yields

\begin{eqnarray*}
\frac{1}{v_{\epsilon}}\int_{\pi^{-1}(B_g(o,r))}{Fd\mu_{\bar{g}_{\epsilon}}}&=&\frac{1}{v_{\epsilon}}\int_{B_g(o,r)}{\bigg(\int_{\pi^{-1}(p)}{(f\circ\pi)(x)d\mu_{\pi^{-1}(p)}(x)}\bigg)d\mu_g(p)}\\
&=&\int_{B_g(o,r)}{fd\mu_g}.
\end{eqnarray*}
  
  \noindent Therefore, 
  
\begin{equation}\label{KLANT} \bigg|\int_{B_g(o,r)}{fd\mu_g}-\frac{1}{v_{\epsilon}}\int_{B_{\bar{g}_{\epsilon}}(e,r)}{Fd\mu_{\bar{g}_{\epsilon}}}\bigg|\leq c\bigg|\frac{\mu_{\bar{g}_{\epsilon}}(\pi^{-1}(B_g(o,r)))}{v_{\epsilon}}-\frac{\mu_{\bar{g}_{\epsilon}}(B_{\bar{g}_{\epsilon}}(e,r))}{v_{\epsilon}}\bigg|. \end{equation}

\noindent  Applying Equation \eqref{above} for $S=B_g(o,r)$, and using relation \eqref{LimitVolume}, the right-hand side in \eqref{KLANT} tends to zero for $\epsilon \rightarrow 0$. Therefore, relation \eqref{LimitMean1} is true and hence relation \eqref{FirstLimitMean} follows.

Applying the same argument to $F^2=f^2\circ \pi$, we obtain

\begin{equation}\label{LimitMean2}\lim_{\epsilon \rightarrow 0}{\frac{1}{v_{\epsilon}}\int_{B_{\bar{g}_{\epsilon}}(e,r)}{F^2d\mu_{\bar{g}_{\epsilon}}}=\int_{B_g(o,r)}{f^2d\mu_g}}. 
 \end{equation}

\noindent For simplicity, set 

\[ B_{\epsilon}:=B_{\bar{g}_{\epsilon}}(e,r)\ \ \makebox{and} \ \ F_{\epsilon}:=F_{B_{\epsilon}}=\frac{1}{\mu_{\bar{g}_{\epsilon}}(B_{\epsilon})}\int_{B_{\epsilon}}{Fd\mu_{\bar{g}_{\epsilon}}}. \]

\noindent Then we have

\begin{equation*}\int_{B_{\epsilon}}{|F-F_{\epsilon}|^2d\mu_{\bar{g}_{\epsilon}}}= \int_{B_{\epsilon}}{F^2d\mu_{\bar{g}_{\epsilon}}}-2F_{\epsilon}\int_{B_{\epsilon}}{Fd\mu_{\bar{g}_{\epsilon}}}+F_{\epsilon}^2\mu_{\bar{g}_{\epsilon}}(B_{\epsilon})=\int_{B_{\epsilon}}{F^2d\mu_{\bar{g}_{\epsilon}}}-F_{\epsilon}^2\mu_{\bar{g}_{\epsilon}}(B_{\epsilon}). \end{equation*}

\noindent Therefore,

\begin{equation}\label{VarianceIdentity}\frac{1}{v_{\epsilon}}\int_{B_{\epsilon}}{|F-F_{\epsilon}|^2d\mu_{\bar{g}_{\epsilon}}}=\frac{1}{v_{\epsilon}}\int_{B_{\epsilon}}{F^2d\mu_{\bar{g}_{\epsilon}}}-F_{\epsilon}^2\frac{\mu_{\bar{g}_{\epsilon}}(B_{\epsilon})}{v_{\epsilon}}. \end{equation} 

\noindent By virtue of relations \eqref{LimitVolume}, \eqref{FirstLimitMean} and \eqref{LimitMean2}, Equation \eqref{VarianceIdentity} yields

\begin{eqnarray*} \lim_{\epsilon \rightarrow 0}{\frac{1}{v_{\epsilon}}\int_{B_{\epsilon}}{|F-F_{\epsilon}|^2d\mu_{\bar{g}_{\epsilon}}}}&=&\int_{B_g(o,r)}{f^2d\mu_g}-f^2_{B_g(o,r)}\mu_g(B_g(o,r))\\ \\ 
&=&\int_{B_g(o,r)}{|f-f_{B_g(o,r)}|^2d\mu_g}, \end{eqnarray*}

\noindent which proves Claim \ref{ClaimPoincare3}.

Dividing \eqref{PoincareLieGroupsProof} by $v_{\epsilon}$ and letting $\epsilon\rightarrow 0$, Claims \ref{ClaimPoincare1}--\ref{ClaimPoincare3} yield the desired inequality. \qed

\end{document}